\newtheorem{theorem}{Theorem}[section]
\newtheorem{definition}[theorem]{Definition}
\newtheorem{lemma}[theorem]{Lemma}
\newtheorem{corollary}[theorem]{Corollary}
\newtheorem{hypothese}[theorem]{Hypothesis}
\newtheorem{hypotheses}[theorem]{Hypotheses}
\newtheorem{example}[theorem]{Example}
\newtheorem{remark}[theorem]{Remark}
\newtheorem{proposition}[theorem]{Proposition}
\newtheorem{conjecture}[theorem]{Conjecture}
\def\bthm{\begin{theorem}}
\def\ethm{\end{theorem}}
\def\bcor{\begin{corollary}}
\def\ecor{\end{corollary}}
\def\bprop{\begin{proposition}}
\def\eprop{\end{proposition}}
\def\blem{\begin{lemma}}
\def\elem{\end{lemma}}
\def\brem{\begin{remark}}
\def\erem{\end{remark}}
\def\bdes{\begin{description}}
\def\edes{\end{description}}
\newcommand{\norm}[1]{\left\|#1\right\|}
\newcommand{\abs}[1]{\left|#1\right|}
\newcommand{\related}[1]{\stackrel{#1}{\rightarrow}}
\def\Esp{\mathbb E}
\def\Pr{\mathbb P}
\def\mca{\mathcal A}
\def\mcb{\mathcal B}
\def\mcc{\mathcal C}
\def\mcf{\mathcal F}
\def\mcm{\mathcal M}
\def\mcr{\mathcal R}
\def\mbt{\mathbf t} 
\def\Nset{\mathbb N}
\def\Rset{\mathbb R}
\def\Ind{{\mathds{1}}}
\def\one{{\bf 1}}
\def\Vset{\mathcal X}
\def\OEset{\vec{\mathcal E}}
\def\NOEset{\mathcal E}
\def\supp{\text{Supp}}
\title[VRNBW: an example of path formation]{Vertex reinforced non-backtracking random walks: an example of path formation
}
\author{Line C Le Goff and Olivier Raimond}
\thanks{This research has been conducted as part of the project Labex MME-DII (ANR11-LBX-0023-01)}
\begin{document}

\begin{abstract}
This article studies vertex reinforced random walks that are non-backtracking (denoted VRNBW), i.e. U-turns forbidden.
With this last property and for a strong reinforcement, the emergence of a path may occur with positive probability.
These walks are thus useful to model the path formation phenomenon, observed for example in ant colonies.
This study is carried out in two steps.
First, a large class of reinforced random walks is introduced and results on the asymptotic behavior of these processes are proved.
Second, these results are applied to VRNBWs on complete graphs and for reinforced weights $W(k)=k^\alpha$, with $\alpha\ge 1$.
It is proved that for $\alpha>1$ and $3\le m< \frac{3\alpha -1}{\alpha-1}$, the walk localizes on $m$ vertices with positive probability, each of these $m$ vertices being asymptotically equally visited. Moreover
the localization on $m>\frac{3\alpha -1}{\alpha-1}$ vertices is a.s. impossible.
\end{abstract}

\maketitle

	\section{Introduction}

The contributions of this paper are twofold. First, results concerning the asymptotic behavior of a large class of reinforced random walks (RRW) are proved. Second, we present a strongly reinforced random walk, useful to model the path formation phenomenon.

By formation of a path, we mean that after a certain time, the walk only visits a finite number of vertices, always in the same order.
Such phenomena are observed in ant colonies.
For some species, ants deposit pheromones along their trajectories. 
The pheromone is a chemical substance which attracts the ants of the same colony, and thus reinforces the sites visited by the ants. 
Depending on the succession of these deposits, trails appear between important places such as food sources and nest entries.

RRWs on graphs are natural to model such behavior\,: most visited vertices are more likely to be visited again.
They have already been used to study ant behavior (see~\cite{Deneubourg1990,Vittori2006,Garnier2009}).
But as they are usually defined (see~\cite{Davis1990,Tarres2004,Sellke2008,Limic2007,Benaim2013}), one can obtain a localization phenomenon, i.e. only a finite number of points are visited infinitely often, but no path formation is observed\,: there is no fixed order with which these vertices are visited by the walk.

Therefore, additional rules are necessary for the emergence of a path. In this paper, we choose to add a non-backtracking constraint\,: the walk cannot return immediately to the vertex it comes from.
More precisely, let $G=(\Vset,\NOEset)$ be a locally finite non-oriented graph without loops, with $\Vset$  the set of its {\it vertices} and ${\NOEset\subset \{\{i,j\}:\; i,j\in \Vset,\,i\neq j\}}$ the set of its {\it non-oriented edges}. 
For $\{i,j\}\in\NOEset$, denote ${i\sim j}$, and for $i\in \Vset$, let $N(i):=\{j\in\Vset:\;j\sim i\}$ be the neighborhood of $i$. Let ${X=(X_n)_{n\ge 0}}$ be a non-backtracking random walk on $G$, i.e. for ${n\ge 0}$, ${X_{n+1}\sim X_n}$ and for ${n\ge 1}$, $X_{n+1}\ne X_{n-1}$. 
We suppose that this walk is vertex reinforced\,:
for ${n\ge 0}$ and ${i\in\Vset}$,
\begin{eqnarray*}
\Pr(X_{n+1}=i|X_0,\cdots,X_n) &=& \frac{W(Z_n(i))}{\sum_{j\sim X_n,\,j\ne X_{n-1}} W(Z_n(j))}\Ind_{i\sim X_{n}}\Ind_{i\neq X_{n-1}},
\end{eqnarray*}
where $Z_n(i)$ is the number of times the walk $X$ has visited $i$ up to time $n$ and $W:\Nset\to\Rset^*_+$ is a reinforcement function.  The walk $X$ is called a {\it vertex reinforced non-backtracking random walk} (VRNBW).
Non-backtracking random walks have first been introduced in Section~5.3 of \cite{Madras1993}, and named later non-backtracking random walks in \cite{Ortner2007}.

A path of length $L$ is a set of $L+1$ vertices $P=\{i_0,i_1,\dots,i_L\}$ such that $\{i_{\ell-1},i_{\ell}\}\in \NOEset$ for all $\ell\in\{1,\dots,L\}$.
A cycle of length $L$ is a path $C=\{i_0,\cdots,i_{L}\}$ of length $L$ such that $i_0=i_L$.
The following result shows that for a strong reinforcement, VRNBWs follow a cycle and thus form a path with positive probability.
\begin{proposition}\label{prop0.1} Let $C=\{i_0,\cdots,i_{L}\}$ be a cycle of length $L\ge 3$, such that  for all $i\in C$, $N(i)\cap C$ contains exactly two vertices.
Suppose also that $W$ is a strongly reinforcement function, i.e. $\sum_{k=0}^\infty \frac{1}{W(k)}<\infty$. 
Then, when $X_0=i_0$, the probability that for all $k\ge 0$ and $\ell\in\{1,\dots,L\}$,  $X_{kL+\ell}=i_\ell$ is positive.
\end{proposition}
\begin{proof}
For $i\in \Vset$, let $d_i:=|N(i)|$ be the number of neighbors of $i$.
It is straightforward to check that 
\begin{eqnarray*}
\Pr\big(\forall k\ge 0,\forall \ell \in\{1,\dots,L\},\; X_{kL+\ell }=i_\ell\big)&=& \prod_{k=1}^\infty \prod_{\ell=1}^{L} \left(\frac{W(k)}{W(k)+a_\ell}\right)\;,
\end{eqnarray*}
where $a_\ell=(d_{i_\ell}-2)W(0)$. Since $\sum_{k=0}^\infty \frac{1}{W(k)}<\infty$, this probability is positive.
\end{proof}

The general study of RRWs (in order to obtain almost sure properties) is difficult.
Even without the non-backtracking constraint, almost sure localization on two vertices could only be proved recently by C.~Cotar and D.~Thacker in \cite{Cotar2017} for {\it vertex reinforced random walks} (VRRWs) on connected non-oriented graphs of bounded degree with a reinforcement function $W$ satisfying $\sum_{k=0}^\infty\frac{1}{W(k)}<\infty$. 

Using stochastic algorithm techniques and more precisely results from \cite{Benaim2010}, a more complete study of VRRWs on complete graphs, with reinforcement function $W(k)=(1+k)^\alpha$, with $\alpha\ge 1$, could be done by R.~Pemantle in \cite{Pemantle1992} in the case ${\alpha=1}$, and by M.~Bena\"im, O.~Raimond and B.~Schapira \cite{Benaim2013} in the case ${\alpha >1}$.
The principle of these methods is to prove that the evolution of the empirical occupation measure of the walk is well approximated by an {\it ordinary differential equation} (ODE).
To make that possible some hypotheses are made so that for large times,  the walk behaves almost as an indecomposable Markov chain, whose mixing rate is uniformly bounded from below.

Because of the non-backtracking constraint, this last property fails for VRNBWs. To overcome this difficulty, we set up a framework which is a large particular case of the one in \cite{Benaim2010}. 
More precisely, we introduce a class of RRWs, which contains vertex and edge reinforced random walks (eventually non-backtracking) on non-oriented graphs.
In order to introduce a dependence on the previously visited vertex,  a walk in this class is defined via a process on the set of edges.
This was not necessary in \cite{Benaim2013}.
Moreover at each time step, what is reinforced is a function of the edge that has just been traversed. 
We prove a result similar to Theorem~2.6 of \cite{Benaim2010} (approximation by an ODE), but under different assumptions.

Applying these results, we study VRNBWs on the complete graph with $N\ge 4$ vertices and reinforcement function $W(k)=(1+k)^\alpha$, $\alpha\ge 1$.
Such VRNBWs are then equivalent to urns with $N$ colors, the two last chosen colors being forbidden. 
Note that, for a complete graph, the sets $C$ as in Proposition~\ref{prop0.1} are the sets constituted by three different vertices.

\smallskip
Let us now state our main result for VRNBWs.
Denote by $S\subset \Vset$ the set of infinitely often visited vertices by $X$.The non-backtracking assumption implies that $|S|\ge3$  and that a cycle has been selected only when $|S|=3$.
\begin{theorem}\
\label{th:frrw:vn CV} Let $X$ be a VRNBW on a complete graph with reinforcement function $W(k)=(k+1)^\alpha$. For $n\ge 1$, set $v_n:=\frac{1}{n}\sum_{k=1}^n \delta_{X_k}$. Then, $v_n$ converges a.s.\ towards the uniform probability measure on $S$ and
\begin{enumerate}
\item \label{th:frrw:vn CV:a=1} when $\alpha=1$, $S=\Vset$ a.s.,
\item \label{th:frrw:vn CV:a>1} 
when $\alpha>1$, for $m\ge 3$
\begin{itemize}
	\item $\Pr( |S|=m)>0$, if $3\le m<\frac{3\alpha-1}{\alpha-1}$,
	\item $\Pr(|S|=m)=0$, if $m>\frac{3\alpha-1}{\alpha-1}$.
	\end{itemize}
\end{enumerate}
\end{theorem}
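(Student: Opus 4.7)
My plan is to apply the ODE method of the general framework developed earlier in the paper (the analogue of Theorem~2.6 of~\cite{Benaim2010}) to the VRNBW on the complete graph with $N\ge 4$ vertices and weight $W(k)=(1+k)^\alpha$. Because the walk must remember the previously visited vertex, the right state variable is a normalized occupation measure on oriented edges: let $\mu_n$ be this measure, from which $v_n$ is recovered by projecting onto the tail vertex. The framework should show that $(\mu_n)$ is a stochastic approximation whose asymptotic set is contained in the equilibrium set of an associated mean-field ODE on the simplex of oriented-edge frequencies. The first task is to identify this ODE concretely and to describe its equilibria.

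The next step is to classify equilibria. A natural family consists of measures supported on the oriented edges of a cycle $C\subset\Vset$ of length $K\ge 3$ with $N(i_\ell)\cap C=\{i_{\ell-1},i_{\ell+1}\}$, assigning equal mass to the two orientations of each edge of $C$; these are exactly the candidate limits when $|S|=K$ and they project onto the uniform measure on $C$. The convergence $v_n\to$ uniform on $S$ in part~(i) then follows from the fact that any limit point of $\mu_n$ must be an equilibrium supported on oriented edges of $S$, combined with the non-backtracking constraint forcing the incoming and outgoing flows at each vertex of $S$ to coincide. For $\alpha=1$, the drift is affine and one expects a single globally attracting equilibrium (the uniform measure on all oriented edges), which gives $S=\Vset$ a.s.

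For $\alpha>1$, the two parts of~(ii) reduce to a linear stability analysis of the cycle equilibria. I would linearize the ODE at a $K$-cycle equilibrium and compute the eigenvalues of its Jacobian on the tangent space to the simplex, decomposing perturbations into ``along-cycle'' modes (among the $2K$ cycle edges) and ``transverse'' modes (transfers of mass to the $N-K$ exterior vertices). The transverse modes should produce the decisive eigenvalue, whose sign changes precisely at the threshold $K=\frac{3\alpha-1}{\alpha-1}$. When $3\le K<\frac{3\alpha-1}{\alpha-1}$ all non-trivial eigenvalues are negative, the equilibrium is a linear attractor, and the attainability is guaranteed by Proposition~\ref{prop0.1}; the attractor theorem for stochastic approximations then gives $\Pr(|S|=K)>0$. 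When $K>\frac{3\alpha-1}{\alpha-1}$ there is a strictly unstable direction, and a non-convergence-to-unstable-equilibria argument (in the spirit of Pemantle, adapted in the framework set up earlier in the paper) rules out convergence of $\mu_n$ to the cycle equilibrium; combined with part~(i), which forces any event $\{|S|=K\}$ to entail such convergence, this yields $\Pr(|S|=K)=0$.

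The main obstacle is the explicit stability computation producing exactly $\frac{3\alpha-1}{\alpha-1}$. The non-backtracking rule couples the edge frequencies non-trivially (the walk is an urn on oriented edges in which the two most recently visited colors are forbidden), so the Jacobian at a cycle equilibrium is not diagonal: one has to diagonalize a block mixing the $2K$ cycle edges with the $2K(N-K)$ edges joining the cycle to the exterior, while respecting the conservation constraints. A secondary technical point is the verification of the noise non-degeneracy and of the hypotheses of the general ODE theorem for the specific process at hand, in particular the uniform lower bound on the probability of leaving any configuration, which requires care because of the forbidden U-turns.
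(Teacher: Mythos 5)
Your overall plan (ODE/stochastic-approximation method, linear stability analysis, threshold from eigenvalues) is the right one, but there are several genuine gaps, and the paper takes a different and materially simpler route on the key steps.

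First and most importantly, the general ODE theorem (the analogue of Theorem~2.6 of~\cite{Benaim2010}) only yields that the limit set of the occupation measure is attractor free for the flow; it does \emph{not} by itself say that limit points are equilibria. To conclude convergence to equilibria one needs a strict Lyapunov function, and nothing in your proposal supplies one. The paper works with the \emph{vertex} occupation measure $v_n\in\Delta_{\Vset}$ (the oriented-edge chain $(E_n)$ drives the dynamics, but the stochastic approximation is for $v_n$, with $R=\Vset$), precisely because on that simplex there is an explicit strict Lyapunov function, $H(v)=\sum_{i\neq j\neq k\neq i}v_i^\alpha v_j^\alpha v_k^\alpha$, whose gradient identity $\langle\nabla H(v),\pi^V(v)-v\rangle=\frac{3\alpha}{H(v)}\bigl(vh^2(v)-(vh(v))^2\bigr)\ge 0$ gives the needed monotonicity. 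It is not clear that any comparably simple Lyapunov function exists for the oriented-edge frequency vector $\mu_n$ that you propose to use, and without one the reduction of the limit set to a finite equilibrium set breaks down.

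Second, your classification of equilibria is incomplete. On a complete graph the equilibria are not only the uniform measures on subsets of size $K\ge 3$; there are also non-uniform equilibria of the form $(1-p)\mu_K+p\mu_M$, whose existence and cardinality depend on $\alpha$ (Proposition~\ref{prop:D(K,M)}). These cannot simply be ignored: one must prove they are all unstable (Proposition~\ref{prop:frrw:st_no_unif_eq,a>1}), otherwise the Lyapunov argument leaves open that $v_n$ converges to one of them, contradicting the statement that $v_n$ converges to a \emph{uniform} measure. Your claim that limit points ``must be an equilibrium supported on oriented edges of $S$'' with the cycle structure is not enough to exclude them.

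Third, the stability intuition you offer is reversed. The directions $e_i-v_*$ with $i\notin\supp(v_*)$ (your ``transverse'' modes) always have eigenvalue $-1<0$ and are never decisive. The decisive eigenvalue comes from the within-support directions $e_i-e_j$ with $i,j\in\supp(v_*)$, and equals $-1+\alpha\tfrac{K-3}{K-1}$; its sign change at $\alpha=\tfrac{K-1}{K-3}$, equivalently $K=\tfrac{3\alpha-1}{\alpha-1}$, gives the threshold (Proposition~\ref{prop:frrw:st_unif_eq,a>1}). You would likely discover this after computing, but the stated heuristic would send you after the wrong block.

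Finally, two technical points you gesture at need more than a ``secondary'' treatment. The real difficulty in verifying the hypotheses of the general framework is not a lower bound on transition probabilities, but that $P(v)$ fails to be indecomposable as $v$ approaches the set $\Sigma^3$ of uniform measures on three-point sets (the chain splits into two disjoint directed $3$-cycles); one must show by explicit Taylor expansion that $v\mapsto Q(v)V(e)$ nevertheless extends continuously to $\Sigma^3$ (Proposition~\ref{prop:Q(v)g CV}). And the passage from ``$v_n$ converges to the uniform measure on a set $A$'' to ``$S=A$'' requires the localization lemmas (Lemmas~\ref{lem:localization1} and~\ref{lem:localization2}), giving a polynomial rate of convergence of $v_n(i)$ for $i\notin A$; your sentence compressing this step conflates the two directions of the inclusion $\supp(v_\infty)=S$ without justifying the nontrivial one.
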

For VRRWs on complete graphs, 
very similar results (replacing $\frac{3\alpha-1}{\alpha-1}$ by $\frac{2\alpha-1}{\alpha-1}$) are obtained in \cite{Benaim2013}. 

We say that a walk $X$ on a graph forms a cycle of length $L$, if after a random time $T$ there is a set $\{i_1,\dots,i_L\}$ such that for all $k\ge 0$ and $1\le \ell\le L$, $X_{T+kL+\ell}=i_\ell$. 
%When a walk follows a cycle of size $L$, it forms a path of length $L$.
Note that, on complete graphs, VRNBWs can only form cycles of length three.
Moreover, Theorem \ref{th:frrw:vn CV} implies that when $\alpha>3$, a.s. a cycle of length three is formed.
When $\alpha$ is sufficiently close to $1$, there is a positive probability that localization occurs on larger sets, which in this case means no path is formed.

Proposition~\ref{prop0.1} shows that in more general graphs, VRNBWs provide more elaborated paths (of length larger than 3) with positive probability.
M.~Holmes and V.~Kleptsyn have pointed out to us that non backtracking reinforced random walks may localize on more complicated sets, in particular two cycles joined by one path (see \cite{Cotar2017a}).
In fact, it can be seen (using the techniques developed in the present paper) that VRNBWs may localize with positive probability towards such subgraphs, provided the path contains at least two edges (or three vertices), the two cycles satisfy the conditions of Proposition \ref{prop0.1} and there are no edges joining the two cycles.
This, with Theorem~\ref{th:frrw:vn CV}, allows to write the following conjecture.
\begin{conjecture}
Let $X$ be a VRNBW on a connected non-oriented finite graph, with reinforcement function $W(k)=(1+k)^\alpha$.
Suppose that $\alpha>3$, then a.s. $S$ is either a cycle $C$ as in Proposition \ref{prop0.1} or is of the form $C_1\cup P\cup C_2$, where $C_1$ and $C_2$ are two cycles as in Proposition \ref{prop0.1}, joined by a path $P=\{i_0, \cdots, i_L\}$ of length $L\ge 2$, such that $P\cap C_1=\{i_0\}$, $P\cap C_2=\{i_L\}$ and there are no edges joining the two cycles.
\end{conjecture}
To prove such a conjecture is a difficult task. 
Note that the ordered statistics method used in \cite{Cotar2017} is not likely to be used for VRNBWs.

\smallskip
The phase transitions given by Theorem \ref{th:frrw:vn CV} are interesting for the understanding of ant behavior.
Indeed, when $\alpha>3$, a path is formed.
Thus, if ants were acting like a VRNBW and if they can change their sensibility to pheromones by modulating the parameter $\alpha$, they could either make sure that a path will emerge ($\alpha>3$), or could continue to explore a selected area ($\alpha<3$).
Simulation studies of agent-based models provide similar results (see~\cite{Schweitzer1997,Perna2012}).

\smallskip
The paper is organized as follows.  The main notations of the paper are given in Section~\ref{sec:NM:notations}.
In Section~\ref{sec:a class of RRW}, the class of RRWs, introduced in Section~\ref{sec:a class of RRW:definition}, is studied. The main results are stated in Section~\ref{sec:a class of RRW:main result} and their proofs are given in Sections~\ref{sec:a class of RRW:proof} and \ref{sec:RRW:st_unst_equi}.
In Section~\ref{sec:The FVRRW}, the results of Section~\ref{sec:a class of RRW} are applied to VRNBWs on complete graphs, and Theorem~\ref{th:frrw:vn CV} is proved. 
In Sections~\ref{sec:frrw:hyp:def:vn}, \ref{sec:hyp:K}, \ref{sec:piV} and \ref{sec:QV}, we verify that these VRNBWs satisfy the hypotheses of Section~\ref{sec:a class of RRW}.
This is the most delicate part of this paper, where we had to deal with the fact that the transition matrices of the walk may be very slowly mixing.
A Lyapunov function is defined in Section~\ref{sec:lyap}. 
The description of the set of equilibriums, given in Sections~\ref{sec:equilibriums_a=1}, \ref{sec:equilibriums_a>1}, \ref{SIgmakn} and \ref{cupEkn} is also much more complicated compared to the one done for VRRWs in \cite{Benaim2013}. 

\section{Notations}
	\label{sec:NM:notations} 

Let $A$ be a finite set, often identified to $\{1,\dots,N\}$, where ${N=|A|}$. 

For a map $f:A\to\Rset$, we denote $\min(f)=\min\{f(i):i\in A\}$ and $\max(f)=\max\{f(i):i\in A\}$.
Denote by $\one_{A}$ the map on $A$, which is equal to one everywhere. 

A map $\mu:A\to\Rset$ will be viewed as a (signed) measure on $A$ and for $B\subset A$, $\mu(B)=\sum_{i\in B}\mu(i)$. For a measure $\mu$ on $A$ and $f:A\to \Rset$, set $\mu f=\sum_{i\in A} \mu(i) f(i)\;.$
A \textit{probability measure} on $A$ is a measure $\mu$ such that $\mu(A)=1$ and $\mu(i)\ge 0$ for all ${i\in A}$. The {\it support} of $\mu$, denoted ${\supp(\mu)}$, is the set of all ${i\in A}$ such that ${\mu(i)> 0}$. 
The space $\mathcal{M}_A$ of signed measures on $A$ can be viewed as a Euclidean space of dimension $|A|$, with associated Euclidean norm denoted by $\|\cdot\|$. Subsets of $\mathcal{M}_A$ will be equipped with the distance induced by this norm.

We denote by $\Delta_A$ the set of all probability measures on $A$. 
For $m\le N$, we denote by $\Delta_A^m$ the set of probability measures on $A$, whose support is a subset of $A$ containing exactly $m$ points.
For $\Sigma\subset \Delta_A$, let $\Sigma^m$ be defined by
\begin{align}
\label{def:A^k}
\Sigma^m=\Sigma\cap \Delta_A^m\;.
\end{align}
For $i\in A$, let $\delta_i\in \Delta_A$ be defined by $\delta_i(j)=\delta_{i,j}$, where $\delta_{i,j}=1$ if $i=j$ and $\delta_{i,j}=0$ if $i\ne j$. %\tcb{For $B\subset A$, the uniform probability  measure $\mu$ on $B$ is the measure such that $\mu(i)=1/|B|$ if $i\in B$ and $\mu(i)=0$ otherwise.}

Let $A$ and $B$ be two finite sets and let $T:A\times B\to \Rset$. For a measure $\mu$ on $A$, $\mu T$ is the measure on $B$ defined by
\begin{equation}\mu T(b)=\sum_{a\in A} \mu(a)T(a,b),\quad \hbox{ for $b\in B$}\end{equation}
 and for a map $f:B\to\Rset$, $Tf:A\to\Rset$ is the mapping defined by
\begin{equation} \label{def:Vf(e)}
Tf(a)=\sum_{b\in B} T(a,b)f(b),\quad \hbox{ for $a\in A$}.\end{equation}
For $a\in A$, $T(a)$ is the measure on $B$ defined by $T(a)(b)=T(a,b)$, for $b\in B$. This measure will also be denoted $T(a,\cdot)$ or $T_a$. Note that $T(a)f=T_af=Tf(a)$.

For $T:A\times B\to \Rset$ and $U:B\times C\to \Rset$ with $A$, $B$ and $C$ three finite sets, $TU:A\times C\to \Rset$ is defined by
$$TU(a,c)=\sum_{b\in B} T(a,b)U(b,c),\quad \hbox{ for $(a,c)\in A\times C$}.$$

Let $A$ and $B$ be two finite sets. A {\it transition matrix} from $A$ to $B$ is a map $V:A\times B\to [0,1]$ such that $V_a\in \Delta_B$, for all $a\in A$, and we have
\begin{eqnarray}
\label{eq:V1=1} V \one_{B}&=&\one_A\;.
\end{eqnarray}

A {\it Markov matrix} on a finite set $A$ is a transition matrix from $A$ to $A$.
We denote by $\mathcal{M}_A$ the set of all Markov matrices on $A$.
For $i,j\in A$ and $P\in\mathcal{M}_A$, denote ${i\related{P} j}$ when $P(i,j)>0$. The Markov matrix $P$ is said {\it indecomposable} if there is a set ${\mcr\subset A}$ such that for all $i\in A$ and  $j\in\mcr$, there is a path $(i_1,...,i_n)$ in $A$ for which $i\related{P} i_1 \related{P}\cdots\related{P}  i_n\related{P} j$. The set $\mcr$ is called the {\it recurrent class} of $P$.

It is well known that an indecomposable Markov matrix $P$ has an unique {\it invariant probability measure} ${\pi\in\Delta_A}$ characterized by the relation $\pi P = \pi$. 
Moreover, the generator ${-I+P}$ has kernel $\mathbb{R}\one_{A}$ and its restriction to $\{f:A\to\Rset : \pi f = 0\}$ is an isomorphism. 
It then follows that $I-P$ admits a {\it pseudo inverse} $Q$ characterized by
\begin{align}
\label{def:Q}
\left\{
\begin{array}{ll}
Q\one_{A}=0\;,\\
Q(I-P)=(I-P)Q=I-\Pi\;,
\end{array}\right.
\end{align}
where $\Pi\in\mathcal{M}_A$ is defined by $\Pi(i,j)=\pi(j)$, for $i,j \in A$. In other words, $\Pi$ is the orthogonal projection on $\Rset \one_{A}$ for the scalar product $\langle f, g \rangle_\pi={\sum_{i\in A} f(i) g(i) \pi(i)}$. In particular for all $i\in A$ and $f:A\to \Rset$
\begin{eqnarray}
\label{eq:Pif}
\Pi f(i)&=&\sum_{j\in A} \Pi(i,j) f(j)=\sum_{j\in A} \pi(j)f(j) =\pi f\;.
\end{eqnarray}
Note that $Q\in T\mcm_{A}$, where $T\mcm_{A}$ is the set of maps $q:A\times A\to\Rset$, such that ${\sum_{j\in A} q(i,j)=0}$, for all $i\in A$.

Norms, denoted by $\|\cdot\|$,  on the set of functions on $A$ and on $\mcm_A$ are defined by
\begin{eqnarray}
\label{def:norm}
\norm{f}=\max_{i\in A} |f(i)| &\mbox{and}& \norm{P}=\max_{i,j\in A} |P(i,j)|.
\end{eqnarray}
For $r>0$ and $f:A\to\Rset$, we denote by $\mcb(f,r)=\{g:A\to \Rset:\norm{f-g}\le r\}$  the {\it closed ball} of radius $r$ and centered at $f$ for the norm $\|\cdot\|$.

If $Q\in T\mcm_A$ and $V$ is a transition matrix from $A$ to $B$, then for all $a\in A$, $QV(a):B\to \Rset$ is the measure on $B$ defined by $QV(a)f=QVf(a)$ for $f:B\to\Rset$. Note that $QV(a)(b)=QV(a,b)$.

Let $\Gamma$ be a compact subset of the Euclidean space $\Rset^N$. 
The interior of $\Gamma$ is denoted by $\mathring{\Gamma}$ and its boundary by $\partial \Gamma=\Gamma\setminus \mathring{\Gamma}$. The gradient at $v\in\mathring{\Gamma}$ of a differentiable map $H:\Gamma\to\Rset$ is the vector $\nabla H(v):=\big(\partial_1 H(v),\cdots,\partial_{N} H(v)\big)$, where $\partial_i H$ is the partial derivative of $H$ with respect to its $i$-th coordinate. Let $\langle \cdot,\cdot \rangle$ be the standard scalar product on $\Rset^{N}$.

\section{A class of reinforced random walks}
\label{sec:a class of RRW}
	\subsection{Definition}
	\label{sec:a class of RRW:definition}
Let $G=(\Vset,\NOEset)$ be a finite non-oriented graph. To a non-oriented edge ${\{i,j\}\in \NOEset}$ are associated two {\it oriented edges},  ${(i,j)}$ and $(j,i)$. 
Let $\OEset$ be the set of oriented edges.
Set $\mcm=\mcm_{\OEset}$, the set of Markov matrices on $\OEset$. 
Let $R$ be a finite set, called the {\it reinforcement set}, and set $d=|R|$.
Let $V$ be a transition matrix from $\OEset$ to $R$, and $P:\mathring{\Delta}_R\to \mcm$ be a measurable mapping.

We study here discrete time random processes $((X_n,P_n,V_n))$ defined on $(\Omega, \mcf, \Pr)$, 
a probability space equipped with a filtration $(\mcf_n)_{n\ge 0}$. These processes take their values in $\Vset\times \mcm\times\Delta_R$, are adapted to $(\mcf_n)_{n\ge 0}$ and are such that for all $n\ge 1$,
\begin{itemize}
\item $(X_n,P_n,V_n)$ is $\mcf_n$-measurable for each $n\ge 0$,
\item$E_n:=(X_{n-1},X_n)\in\OEset$ and $V_n=V(E_n)$.
\item The conditional law of $E_{n+1}$ with respect to $\mcf_n$ is $P_n(E_n)$, i.e. 
$$\Pr\big(E_{n+1}=(i,j) | \mcf_n\big)=P_n\big(E_n,(i,j)\big), \hbox{ for all $(i,j)\in \OEset$}.$$
\item $P_n=P(v_n)$, where $v_n\in\Delta_R$ is the {\it reinforcement probability measure} at time $n$ defined by
\begin{align}
v_n=\frac{1}{n+d} \left(1+\sum_{k=1}^nV_k\right)\; .
\end{align}
\end{itemize}
Note that $v_n\in \mathring{\Delta}_R$ for all $n$ and that 
these conditions determine the conditional law of $((X_n,P_n,V_n))$ with respect to $\mcf_1$.

When $R=\Vset$ and $V$ is the transition matrix defined by
$$V((i,j),k)=\delta_j(k),\; \hbox{ for $(i,j)\in\OEset$ and $k\in \Vset$},$$
such walks are said vertex reinforced, the mapping $P$ specifying how the walk is reinforced.
In this case, for each $n$, $V_n=\delta_{X_n}$ and $v_n$ is the empirical occupation measure at time $n$ of the vertices by $(X_n)$.

\begin{example} An example of vertex reinforced random walk is given by $P:\mathring{\Delta}_\Vset\to \mcm$ 
defined by 
$$P(v)\big((i,j),(k,\ell)\big) = \frac{W\big(v(\ell)\big)}{\sum_{\ell'\sim j} W\big(v(\ell')\big)} \Ind_{j=k} $$
with $W:(0,1]\to (0,\infty)$ a continuous function.
\end{example}

When $R=\NOEset$ and $V$ is the transition matrix defined by
$$V((i,j),\{k,\ell\})=\delta_{\{i, j\}}(\{k,\ell\}),\;\hbox{ for $(i,j)\in\OEset$ and $\{k,\ell\}\in\NOEset$},$$
such walks are said edge reinforced, the mapping $P$ specifying how the walk is reinforced.
In this case, for each $n$, $V_n=\delta_{\{X_{n-1},X_n\}}$ and $v_n$ is the empirical occupation measure at time $n$ of the non-oriented edges by $(X_n)$.

\begin{example} An example of edge reinforced random walk is given by $P:\mathring{\Delta}_\Vset\to \mcm$ 
defined by 
$$P(v)\big((i,j),(k,\ell)\big) = \frac{W\big(v(\{j,\ell\})\big)}{\sum_{\ell'\sim j} W\big(v(\{j,\ell'\})\big)} \Ind_{j=k}$$
with $W:(0,1]\to (0,\infty)$ a continuous function.
\end{example}

These are rather usual examples, but our setup includes other reinforced processes, by choosing  different transition matrices $V$. For example, one can take $R=\{A:\;A\subset \Vset\}$ and $V((i,j),A)=1$ if $A=N(j)$ and $V((i,j),A)=0$ otherwise, then it is not the actual visited vertex that is reinforced, but all of its neighbors.

\subsection{Main results of Section~\ref{sec:a class of RRW}}
\label{sec:a class of RRW:main result}
A description of the asymptotic of $(v_n)$ with an ODE is given below in Theorem \ref{th:The_theorem} under the following hypotheses.
\begin{hypotheses}
\label{hyp:K}
There is a compact convex subset $\Sigma$ of $\Delta_R$ such that
\begin{enumerate} 
\item \label{hyp:def}For all $n\ge 1$, $v_n\in\mathring{\Sigma}$.
\item \label{hyp:K:Lips} The map $P$ restricted to $\Sigma\cap\mathring{\Delta}_R$ can be extended to a Lipschitz mapping $P:\Sigma\to \mcm$.
\item \label{hyp:K:ind} The matrix $P(v)$ is indecomposable, for all $v\in\mathring{\Sigma}$.
\end{enumerate}
\end{hypotheses}

Denote by $\mcm_{ind}$ the set of indecomposable Markov matrices on $\OEset$. 
\begin{remark}
The present paper is widely inspired by \cite{Benaim2010}. 
Our set-up is different and permits to study a larger class of reinforced walks.
Indeed, the probability measure $V_n$ does not necessarily belong to $\Sigma$, and in \cite{Benaim2010}, the map $v\mapsto P(v)$ would be a continuous mapping from $\Sigma$ to $\mcm_{ind}$. 
This is not the case here, $P(v)$ may not be indecomposable for all $v\in\partial \Sigma$. 
This gives an additional difficulty in the study of the random process $((X_n,P_n,V_n))$. 
\end{remark}

Hypothesis \ref{hyp:K}-\eqref{hyp:K:ind} permits to define the following functions:
\begin{definition}\label{def:piQ}
For $v\in \mathring{\Sigma}$, 
\begin{itemize} 
\item $\pi(v)$ is the invariant probability measure of $P(v)$;
\item $\pi^V(v)$ is the probability measure on $R$ defined by $\pi^V(v):=\pi(v)V$;
\item $Q(v)\in T\mcm_{\OEset}$ is the pseudo-inverse of $I-P(v)$ (see~\eqref{def:Q}).
\end{itemize}\end{definition}
A consequence of Hypothesis \ref{hyp:K}-\eqref{hyp:K:Lips} is that ${\pi:\mathring{\Sigma}\to\Delta_{\OEset}}$ and ${\pi^V:\mathring{\Sigma}\to \Delta_R}$ are locally Lipschitz. For all $n$, set $\pi_n=\pi(v_n)$ and $\pi^V_n=\pi^V(v_n)$.

\begin{example} If the walk is vertex reinforced, then $\pi^V_n\in\Delta_{\Vset}$ and 
$\pi^V_n=\sum_{i\in\Vset} \pi_n(i,\cdot).$
If the walk is edge reinforced, then  $ \pi^V_n\in\Delta_{\NOEset}$ and 
$$\pi_n^V(\{i,j\}) = \pi_n(i,j) + \pi_n(j,i) - \pi_n(i,i)\Ind_{i=j},\; \hbox{ for all $\{i,j\}\in\NOEset$.}$$
\end{example}

The following hypotheses will also be needed.
\begin{hypotheses}
\label{hyp:th}
\begin{enumerate}
\item \label{hyp:th:pi} The map $\pi^V:\mathring{\Sigma}\to\Delta_{R}$ is continuously extendable to $\Sigma$ and this extension is Lipschitz.
\item \label{hyp:th:Q} For all $e\in\OEset$, the map $v\mapsto Q(v) V(e)$ defined on $\mathring{\Sigma}$ is continuously extendable to $\Sigma$.
\end{enumerate}
\end{hypotheses}

For $u\in\{0,1\}$, set ${T_u\Delta_R=\{v:R\to\Rset:\sum_{r\in R} v(r)=u\}}$.
Since $\Sigma$ is convex and compact, for all $v\in T_1\Delta_R$, there is a unique measure $\mu(v)$ in $\Sigma$ such that $\mu(v)$ is the closest measure to $v$ in $\Sigma$. 
This defines $\mu:T_1\Delta_R\to\Sigma$ which is Lipschitz retraction from ${T_1\Delta_R}$ onto $\Sigma$, i.e. $\mu$ is Lipschitz and its restriction to $\Sigma$ is the identity map on $\Sigma$.

Let $F:T_1\Delta_R\to T_0\Delta_R$ be the {\it vector field} defined by
\begin{align}
\label{def:F}
F(v)=-v+\pi^V(\mu(v)).
\end{align}
Then $F$ is Lipschitz (using Hypothesis~\ref{hyp:th}-\eqref{hyp:th:pi}) and induces a global {\it flow} $\Phi: {\Rset\times T_1\Delta_R}\to T_1\Delta_R$, where for all $v_0\in T_1\Delta_R$, ${t\mapsto \Phi_t(v_0):=\Phi(t,v_0)}$ solves the ODE
\begin{align}
\label{eq:eq_diff} \partial_t\Phi_t(v_0)=F(\Phi_t(v_0)),\qquad \Phi(0,v_0)=v_0.
\end{align}

A set $A\subset \Sigma$ is called {\it invariant} if for all $v_0\in A$, $\Phi(\Rset,v_0)\subset A$. A non empty compact set $A$ is an \textit{attracting set} if there exists a neighbourhood $U$ of $A$ and a function $\mbt:(0,\varepsilon_0)\to\Rset^+$ with $\varepsilon_0>0$ such that for all $\varepsilon<\varepsilon_0$ and $t\ge\mbt(\varepsilon)$,
$\Phi_t(U)\subset A^\varepsilon\,,$
where $A^\varepsilon$ stands for the $\varepsilon$-neighbourhood of $A$. An invariant attracting set is called an \textit{attractor}.

A closed invariant set $A$ is called \textit{attractor free} if there does not exist any subset $B$ of $A$, which is an attractor for $\Phi^A$, the flow $\Phi$ restricted to $A$, defined by $\Phi^A_t(v)=\Phi_t(v)$ for all $t\ge 0$ and $v\in A$.

The limit set of $(v_n)$ is the set $L = L((v_n))$ consisting of all points $v = \lim_{k\to\infty} v_{n_k}$  for some sequence $n_k\to\infty$. Note that since $v_n\in\Sigma$ for all $n$, and since $\Sigma$ is compact, then necessarily, $L\subset\Sigma$. The following theorem is similar to Theorem 2.6 of \cite{Benaim2010}.

\begin{theorem}
\label{th:The_theorem}
Assume that Hypotheses \ref{hyp:K} and \ref{hyp:th} are verified, then the limit set of $(v_n)$ is attractor free for $\Phi$, the flow induced by $F$.
\end{theorem}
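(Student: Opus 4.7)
The plan is to recognise $(v_n)$ as a Robbins--Monro stochastic approximation driven by $F$, to show that its affine time-interpolation is a.s.\ an \emph{asymptotic pseudo-trajectory} (APT) of the flow $\Phi$, and then to invoke the classical theorem of Bena\"im (\cite{Benaim2010}) that the limit set of a bounded APT is internally chain transitive, hence attractor free. The recursion is straightforward:
$$v_{n+1}-v_n=\gamma_{n+1}\bigl(V_{n+1}-v_n\bigr),\qquad \gamma_n:=\tfrac{1}{n+d},$$
and since Hypothesis~\ref{hyp:def}-(ii) keeps $v_n$ in $\mathring{\Sigma}$ one has $\mu(v_n)=v_n$ and $F(v_n)=\pi^V(v_n)-v_n$, so
$$v_{n+1}-v_n=\gamma_{n+1}\bigl(F(v_n)+U_{n+1}\bigr),\qquad U_{n+1}:=V_{n+1}-\pi^V(v_n),$$
with $\sum\gamma_n=\infty$, $\sum\gamma_n^2<\infty$ and $F$ Lipschitz by Hypothesis~\ref{hyp:th}-(i).

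Next I would control $U_{n+1}$ via the Poisson equation for $P(v_n)$, which is indecomposable by Hypothesis~\ref{hyp:K}-(ii): the identity $(I-P(v))Q(v)V=V-\pi^V(v)$ combined with $\Esp\bigl[Q(v_n)V(E_{n+1})\mid\mcf_n\bigr]=P(v_n)Q(v_n)V(E_n)$ produces the decomposition $U_{n+1}=M_{n+1}+R_{n+1}$, where
$$M_{n+1}=Q(v_n)V(E_{n+1})-P(v_n)Q(v_n)V(E_n),\quad R_{n+1}=g(v_n,E_n)-g(v_n,E_{n+1}),$$
and $g(v,e):=P(v)Q(v)V(e)=Q(v)V(e)-V(e)+\pi^V(v)$. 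By Hypothesis~\ref{hyp:th}, $g$ extends continuously to $\Sigma\times\OEset$ and is therefore bounded and uniformly continuous there, with some modulus $\omega$. Thus $(M_{n+1})$ has bounded increments and $\sum\gamma_{n+1}M_{n+1}$ converges a.s.\ by the $L^2$ martingale theorem. For $R_{n+1}$ I would apply Abel summation over a window $n\le j<k$ with $\tau_k-\tau_n\le T$, where $\tau_n:=\sum_{j\le n}\gamma_j$: using $\|v_j-v_{j-1}\|\le 2\gamma_j$, the sum $\sum_{j=n}^{k-1}\gamma_{j+1}R_{j+1}$ decomposes into two boundary terms of order $\gamma_n$, a term bounded by $\|g\|_\infty\sum_j|\gamma_{j+1}-\gamma_j|=O(\gamma_n)$, and a term bounded by $T\,\omega(2\gamma_n)$, all of which vanish as $n\to\infty$ uniformly in the window. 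This verifies the APT noise condition, and combined with the Lipschitz regularity of $F$ it delivers the APT property, whence the conclusion.

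The main obstacle to expect is that $Q(v)$ itself typically blows up as $v$ approaches $\partial\Sigma$, where $P(v)$ may cease to be indecomposable, so a naive pointwise bound on $R_{n+1}$ through $\|Q(v_n)\|$ would fail. Hypothesis~\ref{hyp:th}-(ii) is tailored precisely to circumvent this difficulty: only the product $Q(v)V(e)$ is required to extend continuously to $\Sigma$, and the algebraic identity $PQV=QV-V+\pi^V$ then transports this regularity automatically to the remainder function $g$, without our ever needing to control $Q(v)$ in isolation. This is exactly the gap relative to the framework of \cite{Benaim2010} pointed out in the remark following Hypotheses~\ref{hyp:K}.
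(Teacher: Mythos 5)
Your proof is correct and follows essentially the same route as the paper's: both recast $U_{n+1}$ through the Poisson equation for $P(v_n)$, splitting it into the martingale term $Q_nV(E_{n+1})-P_nQ_nV(E_n)$ plus the remainder $P_nQ_nV(E_n)-P_nQ_nV(E_{n+1})$, and both exploit the crucial point that only $Q(v)V(e)$ (not $Q(v)$ itself) need extend continuously to $\Sigma$. The differences are cosmetic only: you handle the martingale via the $L^2$ convergence theorem where the paper uses Azuma plus Borel--Cantelli, and you treat the remainder by Abel summation on the auxiliary map $g(v,e)=P(v)Q(v)V(e)$ where the paper writes out the equivalent three-way telescoping split $r_{n+1,1}+r_{n+1,2}+r_{n+1,3}$ explicitly.
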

In most examples of interest, Hypothesis \ref{hyp:K} is easily verified. Hypothesis~\ref{hyp:th} may be difficult to check. It should be noted that these hypotheses do not imply Hypotheses 2.1 and 2.2 of \cite{Benaim2010}. 
There are also situations where one can check Hypothesis \ref{hyp:K}, but cannot hope to verify Hypotheses 2.1 and 2.2  of \cite{Benaim2010} (this is the case for VRNBWs studied in section \ref{sec:The FVRRW}).

When there is a strict Lyapunov function for $\Phi$, the set $L$ can be described more precisely. 
To this purpose we define what an equilibrium and a strict Lyapunov function are.

\begin{definition}
\label{def:equilibriums}
An {\it equilibrium} for $F$ is a point $v_*$ such that $F(v_*)=0$. We denote by ${\Lambda=\{v_*\in\Sigma:v_*=\pi^V(v_*)\}}$ the {\it set of equilibriums} for $F$ in $\Sigma$.
\end{definition}

\begin{definition}
A map  $H:\Sigma\to\Rset$ is a {\it strict Lyapunov function} for $\Phi$, if ${\langle \nabla H(v),F(v)\rangle>0}\;,$ for all $v\in\Sigma\setminus\Lambda$.
\end{definition}

The following theorem is a direct application of Proposition~3.27 of \cite{Benaim2005a}.

\begin{theorem}
\label{th:lyapunov}
If Hypotheses \ref{hyp:K} and \ref{hyp:th} hold, if there exists a strict Lyapunov function $H$ for $\Phi$ and if $H(\Lambda)$ has an empty interior, then $L$ is a connected subset of $\Lambda$ and the restriction of $H$ to $L$ is constant.
\end{theorem}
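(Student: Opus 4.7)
The plan is to deduce the conclusion directly from Theorem~\ref{th:The_theorem} combined with standard properties of attractor-free sets for flows carrying a strict Lyapunov function. First, under Hypotheses~\ref{hyp:def}, \ref{hyp:K} and \ref{hyp:th}, Theorem~\ref{th:The_theorem} already gives that $L$ is attractor-free for $\Phi$. A general fact about attractor-free (internally chain transitive) sets is that they are compact, $\Phi$-invariant and connected, so the connectedness assertion comes for free and we only need to identify $L$ inside $\Lambda$ and show $H$ is constant on it.

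Next, I would show that $H$ is constant on $L$. Assume for contradiction it is not; since $L$ is compact and connected and $H$ is continuous, $H(L)$ is a non-degenerate interval $[m,M]$. Because $H(\Lambda)$ has empty interior, one can pick $c\in(m,M)$ with $c\notin H(\Lambda)$. Consider $A=L\cap H^{-1}([c,M])$, which is a non-empty compact proper subset of $L$. For any $v\in L$ with $H(v)=c$ we have $v\notin\Lambda$, so $\frac{d}{dt}H(\Phi_t v)\big|_{t=0}=\langle\nabla H(v),F(v)\rangle>0$, meaning that orbits cross the level set $\{H=c\}$ strictly upwards. A standard trapping-region argument then produces a neighbourhood $U$ of $A$ in $L$ whose forward orbit under $\Phi^L$ is eventually contained in $A$; in particular the $\omega$-limit of $U$ is a proper attractor of $\Phi^L$ inside $L$, contradicting the attractor-freeness of $L$.

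Once $H$ is known to be constant on $L$, the inclusion $L\subset\Lambda$ is immediate: for any $v\in L$, invariance of $L$ gives $H(\Phi_t v)=H(v)$ for all $t\in\Rset$, hence $\langle\nabla H(\Phi_t v),F(\Phi_t v)\rangle=0$ identically in $t$, and the strict Lyapunov inequality forces $\Phi_t v\in\Lambda$ for all $t$, in particular $v\in\Lambda$.

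The genuinely non-trivial part of this picture is the intermediate trapping-region construction, but in its full generality this is exactly the content of Proposition~3.27 of~\cite{Benaim2005a}, to which the authors defer. The only real task inside the present paper is therefore to confirm that Hypotheses~\ref{hyp:def}, \ref{hyp:K} and \ref{hyp:th} (which differ from those of~\cite{Benaim2010,Benaim2005a}) still deliver enough regularity of $\Phi$ and attractor-freeness of $L$ through Theorem~\ref{th:The_theorem} to enter the Benaim--Hirsch machinery. This is the main hurdle: once inside that framework, the argument sketched above is entirely standard.
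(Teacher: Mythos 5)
Your proposal is correct and follows exactly the same route as the paper: the paper's entire proof is ``direct application of Proposition~3.27 of~\cite{Benaim2005a},'' after Theorem~\ref{th:The_theorem} has established that $L$ is attractor free, and you are simply unpacking the standard Lyapunov-plus-chain-transitivity argument that this proposition encapsulates. The trapping-region sketch and the final step deducing $L\subset\Lambda$ from constancy of $H$ on an invariant set are both sound, so no new ideas are needed beyond what the cited proposition already supplies.
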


When $|\Lambda|<\infty$, the connected subsets of $\Lambda$ are singletons and we have the following corollary.

\begin{corollary}
\label{cor:lyapunov}
If Hypotheses  \ref{hyp:K} and \ref{hyp:th} hold, if there exists a strict Lyapunov function $H$ for $\Phi$ and if $\Lambda$ is a finite set, then $v_{\infty}:=\lim_{n\to\infty} v_n$ exists and $v_{\infty}\in\Lambda$.
\end{corollary}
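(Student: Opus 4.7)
The plan is to deduce this corollary as an immediate consequence of Theorem~\ref{th:lyapunov}, so the work reduces to verifying the extra hypothesis of that theorem and then translating ``connected subset of a finite set'' into ``single point'', and finally upgrading ``unique limit point'' to ``convergence''.

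First, I would check that the assumption $H(\Lambda)$ has empty interior needed to apply Theorem~\ref{th:lyapunov} is automatic here: if $\Lambda$ is finite, then its image $H(\Lambda)\subset\Rset$ is a finite set, and any finite subset of $\Rset$ trivially has empty interior. Hence Theorem~\ref{th:lyapunov} applies and yields that the limit set $L = L((v_n))$ is a connected subset of $\Lambda$ on which $H$ is constant.

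Second, since $\Lambda$ is finite, viewed as a subset of the Euclidean space $\mathcal{M}_R$, it inherits the discrete topology, so its only non-empty connected subsets are singletons. Noting that $L$ is non-empty because $(v_n)$ lies in the compact set $\Sigma$ (by Hypothesis~\ref{hyp:def}-(ii)), we conclude that $L=\{v_\infty\}$ for some $v_\infty\in\Lambda$.

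Finally, I would invoke the standard fact that a sequence in a compact metric space whose set of limit points is a single point $v_\infty$ must converge to $v_\infty$: indeed, otherwise there would exist $\varepsilon>0$ and a subsequence $(v_{n_k})$ outside the ball $\mathcal{B}(v_\infty,\varepsilon)$, and by compactness this subsequence would have a further accumulation point distinct from $v_\infty$, contradicting $L=\{v_\infty\}$. Therefore $v_n\to v_\infty\in\Lambda$. There is no real obstacle here; the whole content is already packaged in Theorem~\ref{th:lyapunov}, and the corollary is only a matter of observing that finiteness of $\Lambda$ kills both the empty-interior condition and the connectedness ambiguity.
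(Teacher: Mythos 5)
Your argument is correct and is essentially the paper's own reasoning (the paper compresses it into the sentence preceding the corollary: finiteness of $\Lambda$ makes the connected subsets singletons, so Theorem~\ref{th:lyapunov} applies). You usefully spell out the two implicit steps — that a finite $H(\Lambda)\subset\Rset$ automatically has empty interior, and that a non-empty singleton limit set in a compact space forces actual convergence — but there is no difference in method.
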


In Section~\ref{sec:RRW:st_unst_equi} we will discuss about the convergence of $v_n$ towards an equilibrium according to its stability. More precisely we will prove under some additional assumptions the convergence of $v_n$ towards any stable equilibrium with positive probability and the non-convergence of $v_n$ towards unstable equilibriums.

	\subsection{Proof of Theorem \ref{th:The_theorem}}
	\label{sec:a class of RRW:proof}
	
Using the fact that 
$(n+d+1)v_{n+1}-(n+d)v_n={V}(E_{n+1})$, 
we write the sequence $(v_n)$ as a stochastic algorithm of step $1/(n+d)$ :
\begin{align}
\label{eq:v(n+1)_v(n)}
v_{n+1}-v_n&=\frac{1}{n+d+1}\big( F(v_n)+U_{n+1} )\;,
\end{align}
with $U_{n+1}= V(E_{n+1})-\pi^V(v_n).$

To prove Theorem \ref{th:The_theorem} we will use Proposition~5.1 in \cite{Benaim2010}. In the following lemma, we restate this proposition in our setting (in Proposition~5.1 of \cite{Benaim2010}, the corresponding notations are $\tau_n:=\sum_{k=0}^n \frac{1}{k+d}\sim \log(n)$, $ m(t):=\sup\{k\ge0:t\ge \tau_k\}\sim e^t$ and $m(\tau_n+T)\sim ne^T$, for $T>0$).
\begin{lemma}
\label{lem:lem_th}
Assume that for all $T\in\Nset^*$,  
\begin{align}
\label{eq:lem_th}
\lim_{n\to+\infty}\; \sup_{n\le k \le n T} \norm{\sum_{q=n}^{k}\frac{U_{q}}{q}}=0\;,
\end{align}
then the limit set of $(v_n)$ is attractor free for the dynamics induced by $F$.
\end{lemma}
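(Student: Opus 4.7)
The plan is to recognize Lemma~\ref{lem:lem_th} as a direct translation of Proposition~5.1 of~\cite{Benaim2010} into the present framework, and hence to carry out the verification of the hypotheses of that proposition rather than a first-principles proof. The rewriting~\eqref{eq:v(n+1)_v(n)} already displays $(v_n)$ as a Robbins--Monro stochastic approximation scheme with step sizes $\gamma_{n+1}=1/(n+d+1)$, drift field $F$ and noise $U_{n+1}=V(E_{n+1})-\pi^V(v_n)$. These step sizes satisfy $\gamma_n>0$, $\gamma_n\to 0$ and $\sum_n \gamma_n=\infty$, so the associated logarithmic time change $\tau_n=\sum_{k=0}^n\gamma_{k+1}\sim\log n$ is well defined and $m(\tau_n+T)\sim ne^T$ as indicated right before the lemma. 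Moreover $v_n\in\Sigma$ by Hypothesis~\ref{hyp:def}-(ii), so the trajectory stays in a fixed compact set.

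Next I would check the regularity of $F$. By Hypothesis~\ref{hyp:th}-\eqref{hyp:th:pi}, the extension of $\pi^V$ to $\Sigma$ is Lipschitz, and $\mu:T_1\Delta_R\to\Sigma$ is a Lipschitz retraction by its very construction. Consequently $F=-\mathrm{id}+\pi^V\circ\mu$ defined in~\eqref{def:F} is Lipschitz on $T_1\Delta_R$, which is the regularity condition required by Proposition~5.1 of~\cite{Benaim2010} for the associated semi-flow $\Phi$ to be well defined and for a Gronwall-type argument to be available when comparing the interpolated process to a true orbit.

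With these ingredients in place, the only remaining hypothesis of~\cite[Prop.~5.1]{Benaim2010} is the noise cancellation estimate on logarithmic time windows, which in the notation of that paper asks that $\sup_{n\le k\le m(\tau_n+T)}\|\sum_{q=n}^{k}\gamma_{q+1}U_{q+1}\|\to 0$. Because $\gamma_q\sim 1/q$ and $m(\tau_n+T)\sim ne^T$, this is equivalent (up to standard Abel summation and a harmless shift of index) to the assumption~\eqref{eq:lem_th} of our lemma for each $T\in\Nset^*$. Invoking~\cite[Prop.~5.1]{Benaim2010} then yields that the continuous-time interpolation of $(v_n)$ at times $\tau_n$ is an asymptotic pseudotrajectory of $\Phi$, and therefore its limit set $L((v_n))$ is internally chain transitive for $\Phi$; in particular it is attractor free, which is the conclusion.

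The main point to justify carefully, and the only mildly delicate one, is this translation between the two formulations: in \cite{Benaim2010} the noise enters through increments $\gamma_{q+1}U_{q+1}$ while our statement~\eqref{eq:lem_th} writes the tail as $\sum U_q/q$. The hard part, if any, is not in this lemma but in \emph{verifying} its noise hypothesis~\eqref{eq:lem_th} for the processes of interest; Lemma~\ref{lem:lem_th} itself only packages the stochastic approximation input, and once the dictionary with \cite{Benaim2010} is set up as above it does not require any further argument beyond citing Proposition~5.1 of that reference.
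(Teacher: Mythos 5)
Your proposal is correct and follows essentially the same route as the paper: the paper also presents Lemma~\ref{lem:lem_th} as a restatement of Proposition~5.1 of~\cite{Benaim2010} (with the same dictionary $\gamma_{n+1}=1/(n+d+1)$, $\tau_n\sim\log n$, $m(\tau_n+T)\sim ne^T$), and then invokes the equivalence between internally chain transitive sets and attractor free sets from~\cite{Benaim2005a}. The only thing the paper spells out less than you do is the verification of the ancillary hypotheses of Proposition~5.1 (Lipschitzness of $F$, compactness of the trajectory), which are indeed immediate from Hypotheses~\ref{hyp:def} and~\ref{hyp:th}, so your slightly more explicit account is a faithful expansion rather than a different argument.
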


\begin{remark} 
Actually Proposition 5.1 of \cite{Benaim2010} states that $L$ is an internally chain transitive set. But a set is internally chain transitive if and only it is attractor free. This result comes from the theory of asymptotic pseudo-trajectories.
For more details, we refer the reader to \cite{Benaim2005a} and precisely to Section~3.3 for the definitions and to Lemma~3.5 and Proposition~3.20 for the equivalence.
\end{remark}

Lemma~\ref{lem:lem_th} implies that Theorem \ref{th:The_theorem} holds as soon as \eqref{eq:lem_th} holds for all $T\in\Nset^*$.

\begin{lemma}
If Hypotheses \ref{hyp:K} and \ref{hyp:th} hold,
then \eqref{eq:lem_th} is verified for all ${T\in\Nset^*}$.
\end{lemma}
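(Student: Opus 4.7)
The strategy is a Poisson-equation decomposition of the noise $U_q=V(E_q)-\pi^V(v_{q-1})$ into a bounded martingale increment plus a remainder whose contribution to $\sum U_q/q$ is handled by Abel summation. Applying the pseudo-inverse identity $(I-P(v))Q(v)=I-\Pi(v)$ from~\eqref{def:Q} to $V$ on the right, and using that $\Pi(v)V(e,\cdot)=\pi^V(v)$, one obtains, for $v\in\mathring\Sigma$ and $e\in\OEset$,
\begin{equation*}
V(e,\cdot)-\pi^V(v)=QV(v)(e)-P(v)QV(v)(e).
\end{equation*}
Hypothesis~\ref{hyp:th}-\eqref{hyp:th:Q}, together with the continuity of $P$ on $\Sigma$, extends both sides continuously from $\mathring\Sigma$ to the whole compact $\Sigma$; thus $v\mapsto QV(v)(e)$ and $v\mapsto P(v)QV(v)(e)$ are bounded and uniformly continuous on $\Sigma$, uniformly in the finite variable $e\in\OEset$.

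Specializing to $v=v_{q-1}$, $e=E_q$, I decompose $U_q=M_q+R_q$ with
\begin{align*}
M_q &:= QV(v_{q-1})(E_q)-P(v_{q-1})QV(v_{q-1})(E_{q-1}),\\
R_q &:= P(v_{q-1})QV(v_{q-1})(E_{q-1})-P(v_{q-1})QV(v_{q-1})(E_q).
\end{align*}
Since the conditional law of $E_q$ given $\mcf_{q-1}$ is $P(v_{q-1})(E_{q-1},\cdot)$ (Hypotheses~\ref{hyp:def}), $\Esp[QV(v_{q-1})(E_q)\mid\mcf_{q-1}]=P(v_{q-1})QV(v_{q-1})(E_{q-1})$, so $(M_q)$ is a bounded $(\mcf_q)$-martingale-difference sequence. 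Setting $\Psi_q(e):=P(v_{q-1})QV(v_{q-1})(e)$, the remainder reads $R_q=\Psi_q(E_{q-1})-\Psi_q(E_q)$.

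For the martingale part, boundedness of $M_q$ and $\sum_q 1/q^2<\infty$ yield, via componentwise $L^2$-martingale convergence, that $\sum_q M_q/q$ converges almost surely, hence $\sup_{n\le k\le nT}\|\sum_{q=n}^k M_q/q\|\to 0$ a.s. For the remainder, an Abel summation gives
\begin{equation*}
\sum_{q=n}^k\frac{R_q}{q}=\frac{\Psi_n(E_{n-1})}{n}-\frac{\Psi_k(E_k)}{k}+\sum_{j=n}^{k-1}\frac{\Psi_{j+1}(E_j)-\Psi_j(E_j)}{j+1}-\sum_{j=n}^{k-1}\frac{\Psi_j(E_j)}{j(j+1)}.
\end{equation*}
Boundedness of $\Psi$ makes the boundary terms and the last sum $O(1/n)$. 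For the middle sum, uniform continuity of $v\mapsto P(v)QV(v)(\cdot)$ on the compact $\Sigma$ combined with $\|v_j-v_{j-1}\|=O(1/j)\to 0$ yields $\varepsilon_n:=\sup_{j\ge n}\|\Psi_{j+1}-\Psi_j\|\to 0$, so the middle sum is bounded by $\varepsilon_n\sum_{j=n}^{nT-1}1/(j+1)\le \varepsilon_n(\log T+o(1))\to 0$. Summing up the two contributions gives~\eqref{eq:lem_th}.

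The delicate point is precisely this middle sum. Hypothesis~\ref{hyp:th}-\eqref{hyp:th:Q} provides only \emph{continuous}, not Lipschitz, extension of $QV(v)(e)$ to $\Sigma$, so no quantitative rate for $\|\Psi_{j+1}-\Psi_j\|$ is available---only qualitative $o(1)$ decay. This is exactly balanced by the $\log T$ growth of $\sum_{j=n}^{nT}1/(j+1)$, which is the structural reason that the weaker regularity of $Q(v)$ near $\partial\Sigma$ (where $P(v)$ may fail to be indecomposable, as stressed after Hypotheses~\ref{hyp:K}) still suffices to carry through the ODE approximation.
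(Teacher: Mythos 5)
Your proof is correct and is essentially the same as the paper's: you solve the Poisson equation $V(e,\cdot)-\pi^V(v)=(I-P(v))Q(v)V(e)$ to split $U_q$ into a bounded martingale difference $M_q$ plus a remainder $R_q$, and this is exactly the paper's decomposition $U_{n+1}=\epsilon_{n+1}+r_{n+1}$ with $\Psi_q=P_{q-1}Q_{q-1}V$. The only differences are technical: you dispatch the martingale part by $L^2$-bounded-martingale convergence while the paper uses Azuma plus Borel--Cantelli (both valid; Azuma gives a rate that is not needed here), and you organize the remainder by a single Abel summation whereas the paper splits $r_{n+1}$ into $r_{n+1,1}+r_{n+1,2}+r_{n+1,3}$ and telescopes $r_{n+1,2}$ explicitly; your middle Abel term $\Psi_{j+1}(E_j)-\Psi_j(E_j)$ is the paper's $r_{j,3}$, and you bound it against the $\log T$ growth of $\sum 1/(j+1)$ exactly as the paper does via uniform continuity from Hypothesis~\ref{hyp:th}.
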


\begin{proof}
Along this proof, $C$ is a non-random positive constant that may vary from lines to lines.
For all $n$, set $Q_n=Q(v_n)$ and $\Pi_n=\Pi(v_n)$ and recall that $\pi_n=\pi(v_n)$ and $\pi^V_n={\pi^V}(v_n)$. Remark that for all $e\in\OEset$, $\Pi_n{V}(e)=\pi_n {V}=\pi^V_n$ by using \eqref{eq:Pif}.

Hypotheses~\ref{hyp:th} and the compactness of $\Sigma$ imply that the maps ${v\mapsto{\pi^V}(v)}$ and $v\mapsto Q(v) V(e)$ are uniformly continuous on $\Sigma$, for all $e\in\OEset$. Thus, using that $\|v_{n+1}-v_n\|\le C/n$, we have that,
\begin{align}
&\label{eq:QV}\norm{Q_n{V}(E_n)}\le C\;, \hbox{ for } n\ge 1\\
&\label{eq:pi_piV_QV}
\lim_{n\to\infty}\left\{\norm{(Q_{n+1}-Q_n){V}(E_n)}+\norm{\pi^V_{n+1}-\pi^V_n}\right\}=0\;.
\end{align}
Moreover, for $n$ a positive integer, we have (using the definition of $Q_n$)
\begin{eqnarray}
U_{n+1}=(I-\Pi_n){V}(E_{n+1})
=(Q_n-P_nQ_n){V}(E_{n+1})
= \epsilon_{n+1}+r_{n+1}\;, \label{def:U_n}
\end{eqnarray}
where
\begin{eqnarray}
\label{def:epsilon_n}
\epsilon_{n+1} &=&Q_n{V}(E_{n+1}) -  P_n Q_n{V}(E_n)\;, \\
\label{def:r_n}
r_{n+1}&=& r_{n+1,1}+r_{n+1,2}+r_{n+1,3}\;,
\end{eqnarray}
with
\begin{eqnarray*}
r_{n+1,1} &=&\left(1-\frac{n+1}{n}\right) P_n Q_n{V}(E_n)\;,\\
r_{n+1,2} &=&\frac{n+1}{n}P_n Q_n{V}(E_n)-P_{n+1} Q_{n+1}{V}(E_{n+1})\;,\\
r_{n+1,3} &=&P_{n+1} Q_{n+1}{V}(E_{n+1})-P_nQ_n{V}(E_{n+1})\;.
\end{eqnarray*}

For $T\in\mathbb{N}^*$, $n\in\Nset^*$ and ${1\le i\le 3}$, set
$$ \epsilon_{n}(T)=\sup_{n\le k\le nT} \norm{\sum_{q=n}^{k}\frac{\epsilon_{q}}{q}}\quad \hbox{ and }\quad
r_{n,i}(T)=\sup_{n\le k\le nT} \norm{\sum_{q=n}^{k}\frac{r_{q,i}}{q}}.
$$
Then \eqref{eq:lem_th} is verified as soon as almost surely,  $\lim_{n\to\infty}\epsilon_n(T)=0$  and ${\lim_{n\to\infty}r_{n,i}(T)=0}$ for $i\in\{1,2,3\}$.

The sequence $(\epsilon_{n+1})$ is a martingale difference.
Indeed, for all $n\in\Nset^*$,
\begin{align}
\label{eq:eps_martingale}
\Esp[Q_n{V}(E_{n+1})|\mcf_n]&=P_nQ_n V(E_n)\;.
\end{align}
And using \eqref{eq:QV} we have for all $n\in\Nset^*$,
 $$\norm{\epsilon_{n+1}}\le\|Q_n{V}(E_{n+1})\|+\|P_n\|\,\|Q_n{V}(E_n)\|\le{2C}\;.$$
Moreover, applying Azuma's inequality (\cite{McDiarmid1989}, Theorem 6.7 and \S  6-(c)), we have for all ${\beta>0}$ and all positive integer $n$, 
\begin{align*}
\Pr(\epsilon_n(T)\ge \beta)\le 2|R|  \exp\left(\frac{-\beta^2}{C\sum_{q=n}^{nT}q^{-2}} \right) .
\end{align*}
Since $\sum_{q=n}^{nT} q^{-2} \le nT\times n^{-2}=Tn^{-1}$, we have
$\sum_n \Pr(\epsilon_n(T)\ge \beta) <\infty$, and thus, with Borel-Cantelli Lemma, we conclude that $\lim_{n\rightarrow\infty} \epsilon_{n}(T)=0$ a.s.

For $n\in\Nset^*$,  using \eqref{eq:QV},
$r_{n,1}(T)\le C\sum_{q=n}^{nT}q^{-2} \le \frac{C T}{n}$ and $r_{n,2}(T)\le \frac{2C}{n}$.
Since $P_nQ_n=Q_n-I+\Pi_n$ (see \eqref{def:Q}), for $n\ge 1$,
$$r_{n+1,3}=(Q_{n+1}-Q_n){V}(E_{n+1})+(\pi^V_{n+1}-\pi^V_n)\;,$$
which implies that
$$r_{n,3}(T)\le \log(2T)\left\{\norm{(Q_{n+1}-Q_n){V}(E_n)}+\norm{\pi^V_{n+1}-\pi^V_n}\right\}\;.$$
Therefore, by using \eqref{eq:pi_piV_QV},  this proves that, for $i\in\{1,2,3\}$, $\lim_{n\to\infty}r_{n,i}(T)=0$.
\end{proof}

\subsection{Stable and unstable equilibriums}
	\label{sec:RRW:st_unst_equi}
To define the stability of an equilibrium, we assume the following hypothesis.
\begin{hypothese}
The map ${\pi^V}:\Sigma\to\Delta_R$ is $\mcc^1$.
\end{hypothese}

For $v\in \Sigma$, denote by $DF(v)$ the differential of $F$ at $v$, and, for $u\in T_0\Delta_R$, $D_uF(v):=DF(v)(u) \in T_1\Delta_R$ is the derivative of $F$ at $v$ in the direction $u$. 

\begin{definition}
Let $v_*$ be an equilibrium. We say that $v_*$ is {\rm stable} if all eigenvalues of $DF(v_*)$ have a negative real part and $v_*$ is {\rm unstable} if there exists at least one eigenvalue of $DF(v_*)$ with a positive real part.
\end{definition}

\begin{remark}
\label{rem:st_equi_attractor}
If $v_*$ is a stable equilibrium, then $\{v_*\}$ is an attractor.
\end{remark}

\begin{definition}
Let $v_*$ be an equilibrium. A {\rm stable (unstable) direction} of $v_*$ is an eigenvector of $DF(v_*)$ associated to an eigenvalue with negative (positive) real part.
\end{definition}

\begin{remark}
All eigenvectors of $DF(v_*)$, with $v_*$ a stable equilibrium,  are stable directions and an unstable equilibrium always has at least one unstable direction.
\end{remark}

	\subsubsection{Convergence towards stable equilibriums with positive probability}
	
In this section, it is proved that a stable equilibrium $v_*$ just has to be attainable by $(v_n)$ in order to have that $v_n$ converges towards $v_*$ with positive probability.

\begin{definition}
A point $v_*$ is said attainable by $(v_n)$, if for each ${\epsilon>0}$ and $n_0\in\Nset^*$,
\begin{eqnarray*}
\Pr(\exists n\ge n_0,\norm{v_n-v_*}<\epsilon)&>&0\;.
\end{eqnarray*}
\end{definition}

The following theorem is a particular case of Theorem 7.3 of \cite{Benaim1999} (using Remark~\ref{rem:st_equi_attractor}).

\begin{theorem}\label{th:st_cv}
Let $v_*$ be a stable equilibrium. If $v_*$ is attainable by $(v_n)$, then 
\begin{eqnarray*}
\Pr(v_n\to v_*)&>&0\;.
\end{eqnarray*} 
\end{theorem}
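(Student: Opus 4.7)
The plan is to combine the local hyperbolic dynamics of $\Phi$ near $v_*$ with the noise estimates already established in the proof of Theorem \ref{th:The_theorem} for the stochastic approximation \eqref{eq:v(n+1)_v(n)}. Since $v_*$ is stable, all eigenvalues of $DF(v_*)$ have strictly negative real part, so solving the Lyapunov equation at $DF(v_*)$ produces a neighbourhood $W$ of $v_*$ in $T_1\Delta_R$, constants $c,C>0$, and a smooth map $L:W\to[0,\infty)$ with $L(v_*)=0$, $c\|v-v_*\|^2\le L(v)\le C\|v-v_*\|^2$, and $\langle\nabla L(v),F(v)\rangle\le -cL(v)$ on $W$. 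The sublevel sets $W_\rho:=\{L\le\rho\}$ are then positively invariant under $\Phi$ and shrink to $\{v_*\}$ as $\rho\to 0$, which is the analytic counterpart of Remark \ref{rem:st_equi_attractor}.

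The main step is a trapping lemma: given $\rho_0>0$ small enough that $W_{\rho_0}\subset W$, one chooses $\rho\in(0,\rho_0/2)$ and $n_1\in\Nset^*$ so that for every $(\mcf_n)$-stopping time $\tau\ge n_1$ with $v_\tau\in W_\rho$,
\begin{align*}
\Pr\!\left(\forall n\ge\tau,\ v_n\in W_{\rho_0}\text{ and }v_n\to v_*\,\big|\,\mcf_\tau\right)\ge \tfrac12.
\end{align*}
To prove it, set $\gamma_n:=1/(n+d+1)$ and Taylor-expand $L$ along \eqref{eq:v(n+1)_v(n)}: as long as $v_n\in W$,
\begin{align*}
L(v_{n+1})-L(v_n)\le -c\gamma_n L(v_n)+\gamma_n\langle\nabla L(v_n),U_{n+1}\rangle+C'\gamma_n^2.
\end{align*}
Decomposing $U_{n+1}=\epsilon_{n+1}+r_{n+1}$ as in \eqref{def:U_n}, the bound \eqref{eq:QV} makes $(\epsilon_{n+1})$ a uniformly bounded martingale difference, so Doob's $L^2$ maximal inequality together with $\sum\gamma_n^2<\infty$ controls $\sup_{n\ge\tau}\big|\sum_{k=\tau}^{n-1}\gamma_k\langle\nabla L(v_k),\epsilon_{k+1}\rangle\big|$ with arbitrarily high probability once $n_1$ is large. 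The remainder $r_{n+1}$ is handled via the piecewise estimates on $r_{n+1,1}, r_{n+1,2}, r_{n+1,3}$ from the proof of Theorem \ref{th:The_theorem}, combined with the extra smallness $\|\nabla L(v_n)\|=O(\|v_n-v_*\|)$ while $v_n\in W$, so that a discrete Gr\"onwall argument yields both the confinement $L(v_n)\le\rho_0$ and the decay $L(v_n)\to 0$ on an event of conditional probability at least $\tfrac12$.

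To conclude, fix $\epsilon>0$ with $\mcb(v_*,\epsilon)\subset W_\rho$. By attainability applied with $n_0=n_1$, the hitting time $\tau:=\inf\{n\ge n_1:\|v_n-v_*\|<\epsilon\}$ satisfies $\Pr(\tau<\infty)>0$; on $\{\tau<\infty\}$ we have $v_\tau\in W_\rho$, so the trapping lemma at $\tau$ gives $\Pr(v_n\to v_*\,|\,\mcf_\tau)\ge\tfrac12$ on that event, whence $\Pr(v_n\to v_*)\ge\tfrac12\Pr(\tau<\infty)>0$. The main obstacle is the trapping lemma itself, i.e.\ the uniform-in-time control of the noise when $v_n$ stays close to $v_*$; this is tractable precisely because the step size is square-summable and because the noise decomposition \eqref{def:U_n} from the proof of Theorem \ref{th:The_theorem} already supplies a bounded martingale-difference part together with remainders under quantitative control. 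Everything beyond this is a specialization of the asymptotic-pseudo-trajectory framework of \cite{Benaim2005a,Benaim1999} to the hyperbolic sink $v_*$, as in Theorem 7.3 of \cite{Benaim1999}.
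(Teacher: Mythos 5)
The paper's own proof of this theorem is a one-line citation: Theorem 7.3 of \cite{Benaim1999}, applied via Remark~\ref{rem:st_equi_attractor} (a stable equilibrium is an attractor). Your proposal, by contrast, reconstructs from scratch the content of that cited theorem: you build a local quadratic Lyapunov function from the hyperbolicity of $DF(v_*)$, prove a trapping lemma by combining the Lyapunov decay with the noise decomposition $U_{n+1}=\epsilon_{n+1}+r_{n+1}$ from the proof of Theorem~\ref{th:The_theorem}, and finish by the attainability argument. This is the standard proof of positive-probability convergence to a hyperbolic sink in stochastic approximation, and conceptually it is sound; you even acknowledge at the end that it specializes Theorem 7.3 of \cite{Benaim1999}, which is precisely what the paper invokes. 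So your route is a self-contained unpacking of the citation rather than a genuinely independent argument: it buys explicitness, while the paper's route buys brevity and offloads the martingale/Gr\"onwall bookkeeping to prior literature.

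One technical caution if you were to actually carry out the trapping lemma. Under Hypothesis~\ref{hyp:th} the map $v\mapsto Q(v)V(e)$ is only assumed \emph{continuous} on $\Sigma$, not Lipschitz, so the remainder $r_{n+1,3}=(Q_{n+1}-Q_n)V(E_{n+1})+(\pi^V_{n+1}-\pi^V_n)$ is only $o(1)$, not $O(1/n)$; the Lipschitz strengthening appears only later as Hypothesis~\ref{hyp:unstb_eq}-\eqref{hyp:unstb_eq:ball}, which is a \emph{separate} assumption used for the unstable case and is not available here. Consequently your discrete Gr\"onwall step cannot rely on $\sum\gamma_n\|r_{n+1,3}\|<\infty$; it must instead be run against the Kushner--Clark-type control $\sup_{n\le k\le nT}\|\sum_{q=n}^{k}U_q/q\|\to 0$ from \eqref{eq:lem_th}, which is exactly the asymptotic pseudo-trajectory property that makes the cited Theorem 7.3 applicable in the first place. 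The incidental bound $\|\nabla L(v_n)\|=O(\|v_n-v_*\|)$ that you invoke is not needed and does not repair this: summability of the remainder, not its pointwise size, is the issue. With the Gr\"onwall step reorganized around the pseudo-trajectory estimate rather than term-by-term summability, the rest of the argument goes through.
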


	\subsubsection{Non convergence towards an unstable equilibrium}
\label{sec:non_conv}
Let $v_*\in \Sigma$ be an unstable equilibrium. Then there is an unstable direction $f$ of $v_*$.
Set $P_*=P(v_*)$, $Q_*=Q(v_*)$ and ${\pi_*=\pi(v_*)}$. Set also $\mcr_*=\supp(\pi_*)$, the support of $\pi_*$. For $(i,j)\in\OEset$, let $\mca_{i,j}=\{k\in\Vset:\, P_*\big((i,j),(j,k)\big)>0\}$ 
and ${\mca_j=\bigcup_{i\,:\,(i,j)\in\mcr_*} \mca_{i,j}}$.

\begin{remark}
Let $(E_n^*)=((X_{n-1}^*, X_n^*))$ be a Markov chain of transition matrix $P_*$ and of initial law $\pi_*$. Then $\mca_{i,j}$ is the set of vertices that can be reached by $X^*$ in one step coming from $i$ and starting from $j$, and $\mca_{j}$ is the set of vertices that can be reached by $X^*$ in one step starting from $j$.
\end{remark}

Let $\pi_1$ and $\pi_2$ be the marginals of $\pi_*$, i.e. for all $i,j\in\Vset$,
$$\pi_1(i)=\sum_k \pi_*(i,k)\quad \hbox{ and }\quad \pi_2(j)=\sum_k \pi_*(k,j).$$
Denote by $\mca$ the support of $\pi_1$.
\begin{lemma}
\label{lem:pi1=pi2}
We have $\pi_1=\pi_2$ and $\mcr_*=\{(i,j)\in\OEset:i\in\mca,j\in\mca_i\}$.
\end{lemma}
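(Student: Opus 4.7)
The plan is to exploit the invariance identity $\pi_* P_* = \pi_*$ together with the structural fact that a transition under $P_*$ goes from an oriented edge $(i,j)$ to some oriented edge of the form $(j,k)$, i.e. the two coordinates are linked by a shared vertex. Both claims will follow from this elementary observation, with no analysis required beyond bookkeeping.

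For the first claim $\pi_1=\pi_2$, I would fix a vertex $j\in\Vset$ and write the invariance equation at any edge $(j,k)$:
\begin{equation*}
\pi_*(j,k)=\sum_{(i',j')\in\OEset}\pi_*(i',j')\,P_*\bigl((i',j'),(j,k)\bigr).
\end{equation*}
Because $P_*\bigl((i',j'),(j,k)\bigr)=0$ unless $j'=j$, only the edges of the form $(i,j)$ contribute. Summing this identity over all $k$ with $(j,k)\in\OEset$ and using $\sum_{k}P_*\bigl((i,j),(j,k)\bigr)=1$ gives $\pi_1(j)=\sum_{k}\pi_*(j,k)=\sum_{i}\pi_*(i,j)=\pi_2(j)$. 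This yields $\pi_1=\pi_2$, and in particular $\mca=\supp(\pi_1)=\supp(\pi_2)$.

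For the characterization of $\mcr_*$, I treat both inclusions. If $(i,j)\in\mcr_*$, then $\pi_*(i,j)>0$, so $\pi_1(i)\ge\pi_*(i,j)>0$ and hence $i\in\mca$. Moreover, writing the invariance identity
\begin{equation*}
\pi_*(i,j)=\sum_{i'}\pi_*(i',i)\,P_*\bigl((i',i),(i,j)\bigr),
\end{equation*}
the left-hand side is positive, so there exists $i'$ with $\pi_*(i',i)>0$ and $P_*\bigl((i',i),(i,j)\bigr)>0$; that is, $(i',i)\in\mcr_*$ and $j\in\mca_{i',i}\subset\mca_i$. Conversely, if $i\in\mca$ and $j\in\mca_i$, then by definition of $\mca_i$ there is some $i'$ with $(i',i)\in\mcr_*$ and $j\in\mca_{i',i}$, that is, $\pi_*(i',i)>0$ and $P_*\bigl((i',i),(i,j)\bigr)>0$. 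Plugging into the invariance identity above gives $\pi_*(i,j)\ge\pi_*(i',i)\,P_*\bigl((i',i),(i,j)\bigr)>0$, so $(i,j)\in\mcr_*$.

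There is no real obstacle here; the statement is essentially a bookkeeping consequence of the edge-to-edge constraint encoded in $P_*$. The only mild subtlety worth flagging is that the hypothesis $i\in\mca$ in the second inclusion is in fact automatic as soon as $\mca_i\ne\emptyset$, since the existence of $(i',i)\in\mcr_*$ forces $\pi_2(i)>0$, hence $\pi_1(i)>0$ by the first part; it is kept in the statement only for symmetry with the conclusion of the first inclusion.
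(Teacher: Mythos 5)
Your proof is correct and follows essentially the same reasoning as the paper's. The only cosmetic difference is in the $\pi_1=\pi_2$ step, where you manipulate the invariance equation $\pi_*=\pi_*P_*$ directly and sum over $k$, while the paper phrases the same marginalization probabilistically via a stationary Markov chain $(E_n^*)$ with law $\pi_*$; the characterization of $\mcr_*$ and the closing observation that $\mca_i\ne\emptyset$ already forces $i\in\mca$ match the paper's argument line for line.
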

\begin{proof}
Let $(E_n^*)=((X_{n-1}^*, X_n^*))$ be a Markov chain of transition matrix given by $P_*$ and of initial law $\pi_*$. We know that, for all $n\ge1$, the law of $E_n^*=(X_{n-1}^*,X_{n}^*)$ is $\pi_*$, hence the law of $X_{n-1}^*$ is $\pi_1$ and the law of $X_{n}^*$ is $\pi_2$. Thus $\pi_1=\pi_2$.

Since $\pi_*=\pi_* P_*$, then $(j,k)\in\mcr_*$ if and only if there exists $i\in\Vset$, such that ${\pi_*(i,j)P_*((i,j),(j,k))>0}$. This is equivalent to the fact that there exists ${i\in\Vset}$, such that $(i,j)\in\mcr_*$ and $k\in\mca_{i,j}$, i.e. $k\in\mca_j$.

Note finally that $\mca_j$ is not empty if and only if $j\in\supp(\pi_2)\big(=\supp(\pi_1)\big)$.
\end{proof}

\begin{lemma}
\label{lem:K_*_S_*} There exists $m\ge 0$, such that for all $e\in\OEset$, 
\begin{eqnarray}
\label{eq:K_*_S_*}
\mcr_*&\subset&\bigcup_{q=0}^m \supp\big( P^{q}_*(e,\cdot)\big)\;.
\end{eqnarray}
\end{lemma}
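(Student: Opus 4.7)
The plan is a shortest-path argument on the directed graph $G_*$ on $\OEset$ whose arrows are $\{(e,e')\colon P_*(e,e')>0\}$.

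First, for fixed $e\in\OEset$ and $f\in\mcr_*$, I would produce a directed path in $G_*$ from $e$ to $f$ by concatenating two shorter paths. On the one hand, $\mcr_*=\supp(\pi_*)$ is the recurrent class of $P_*$, so the restriction of $P_*$ to $\mcr_*$ is irreducible; in particular, for any $f_0,f\in\mcr_*$ there is a directed path in $G_*$ from $f_0$ to $f$, entirely contained in $\mcr_*$. On the other hand, the indecomposability of $P_*$ (which holds by Hypothesis \ref{hyp:K}-\eqref{hyp:K:ind} when $v_*\in\mathring\Sigma$, and by the very definition of $\mcr_*$ as the unique recurrent class when $v_*\in\partial\Sigma$) ensures that from every $e\in\OEset$ there is a directed path in $G_*$ reaching some $f_0\in\mcr_*$. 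Concatenating these two paths gives the desired path from $e$ to $f$.

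Second, I would extract a \emph{shortest} such path. Since $G_*$ has $|\OEset|$ vertices, a shortest path visits no vertex twice and hence has length at most $|\OEset|-1$. Setting $m=|\OEset|-1$, for every pair $(e,f)\in\OEset\times\mcr_*$ there exists $q(e,f)\in\{0,1,\dots,m\}$ with $P_*^{q(e,f)}(e,f)>0$; equivalently, $f\in\supp\bigl(P_*^{q(e,f)}(e,\cdot)\bigr)\subset\bigcup_{q=0}^m\supp\bigl(P_*^q(e,\cdot)\bigr)$. Taking the union over $f\in\mcr_*$ yields the desired inclusion \eqref{eq:K_*_S_*}, uniformly in $e$.

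The only potentially delicate point is justifying that $P_*$ is indecomposable (equivalently, that $\mcr_*$ is reachable from every $e\in\OEset$) when $v_*$ lies on $\partial\Sigma$, since Hypothesis \ref{hyp:K}-\eqref{hyp:K:ind} is stated only on the interior; here one invokes the convention adopted in the paper that $\pi_*$ is defined as a limit of $\pi(v_n)$ along $v_n\to v_*$ inside $\mathring\Sigma$, so $\mcr_*$ is a closed communicating class and any state not leading into it would be disjoint from $\supp(\pi_*)$, leaving the argument above unchanged. Apart from this bookkeeping, the lemma is a purely finite-state uniform-reachability statement, and I do not anticipate a real obstacle.
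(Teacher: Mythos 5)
Your proof is essentially the paper's argument, made slightly more explicit: the paper simply states ``Since $\mcr_*$ is the unique recurrent class of $P_*$ and $|\OEset|<\infty$, there exists $m$ such that\dots'', whereas you unfold this into a concatenation of a path from $e$ into $\mcr_*$ with a path inside $\mcr_*$, and you supply the explicit bound $m=|\OEset|-1$ via shortest paths. Both correct and the same idea.

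One caution about your side-remark on the boundary case. You try to justify indecomposability of $P_*$ when $v_*\in\partial\Sigma$ by saying that any state not leading into $\mcr_*$ would be disjoint from $\supp(\pi_*)$. That observation only shows $\mcr_*$ is a closed communicating class; it does \emph{not} rule out a second, disjoint recurrent class $\mcr'$ for $P_*$, whose states would never reach $\mcr_*$ and for which the conclusion \eqref{eq:K_*_S_*} would fail. So the argument as phrased does not actually close the gap you identified. In fairness, the paper's own one-line proof simply asserts uniqueness of the recurrent class without justifying it in the boundary case, so you and the paper are leaning on the same implicit assumption; in the VRNBW application this is genuinely harmless, since there $v_*\in\Sigma\setminus\Sigma^3$ and Proposition~\ref{prop:K_ind_ex1} gives indecomposability of $P(v_*)$ directly. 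But your attempted justification should not be presented as if it settles the general boundary case.
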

\begin{proof}
Since $\mcr_*$ is the unique recurrent class of $P_*$ and $|\OEset|<\infty$, there exists ${m\ge 1}$ such that for all $e\in\OEset$ and $e'\in\mcr_*$, there exists $q\le m$ for which $P^{q}_*(e,e')>0$.
\end{proof}

\begin{hypotheses}
\label{hyp:unstb_eq}
\begin{enumerate}
\item \label{hyp:unstb_eq:ball} There exists $r>0$, such that ${v\mapsto Q(v){V}(e)}$ is Lipschitz on ${\mcb(v_*,r)\cap \Sigma}$, for all $e\in\OEset$.
\item \label{hyp:unstb_eq:A_y} For all $j\in\mca$ and $k,k'\in\mca_j$, there exists $i\in\Vset$ such that $(i,j)\in\mcr_*$ and ${k,k'\in\mca_{i,j}}$.
\item \label{hyp:unstb_eq:G_g} There doesn't exist any map $g:\mca\to\Rset$, such that $$(i,j)\mapsto{V}f(i,j)-g(i)+g(j) \quad \hbox{is constant on $\mcr_*$}.$$
\end{enumerate}
\end{hypotheses}

This section is devoted to the proof of the following theorem.

\begin{theorem}
\label{th:unst_non_cv} Let $v_*$ be an unstable equilibrium. If Hypotheses \ref{hyp:K}, \ref{hyp:th} and \ref{hyp:unstb_eq} hold, then 
\begin{eqnarray*}
\Pr(v_n\to v_*)&=&0\;.
\end{eqnarray*}
\end{theorem}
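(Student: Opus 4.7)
I would argue by contradiction: assume $\Pr(v_n\to v_*)>0$ and apply the classical non-convergence principle for linearly unstable equilibria of stochastic approximations (in the spirit of Pemantle, and of Theorem~7.3 of \cite{Benaim1999}). From the proof of Theorem~\ref{th:The_theorem} we already have
$$v_{n+1}-v_n=\tfrac{1}{n+d+1}\bigl(F(v_n)+\epsilon_{n+1}+r_{n+1}\bigr),$$
with $\epsilon_{n+1}=Q_nV(E_{n+1})-P_nQ_nV(E_n)$ a bounded $\mcf_{n+1}$-martingale difference and $r_{n+1}$ asymptotically negligible. Given an unstable direction $f$ of $DF(v_*)$ with eigenvalue $\lambda$ satisfying $\mathrm{Re}\,\lambda>0$, projecting onto the one-dimensional subspace associated with $\lambda$ yields a scalar subprocess with positive drift multiplier whose noise is essentially $\langle f,\epsilon_{n+1}\rangle$. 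The non-convergence principle then reduces the theorem to establishing the noise non-degeneracy
$$\liminf_{n\to\infty}\Esp\!\bigl[\langle f,\epsilon_{n+1}\rangle^{2}\,\big|\,\mcf_n\bigr]>0 \quad\text{on }\{v_n\to v_*\}.$$

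\textbf{Computation of the conditional variance at $v_*$.} Viewing $f$ as a function on $R$, $\langle f,\epsilon_{n+1}\rangle=Q_nVf(E_{n+1})-P_nQ_nVf(E_n)$ has conditional variance
$$\sum_{e'\in\OEset}P_n(E_n,e')\bigl(Q_nVf(e')-P_nQ_nVf(E_n)\bigr)^2.$$
By Hypothesis~\ref{hyp:unstb_eq}-(\ref{hyp:unstb_eq:ball}), $v\mapsto Q(v)Vf$ is Lipschitz on $\mcb(v_*,\delta)\cap\Sigma$, so on $\{v_n\to v_*\}$ this variance is $o(1)$-close to its value at $v=v_*$. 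A separate argument combining $P_n\to P_*$, Lemma~\ref{lem:K_*_S_*} and Borel--Cantelli shows that eventually $E_n\in\mcr_*$ almost surely on $\{v_n\to v_*\}$; it therefore suffices to prove
$$\sum_{e'}P_*(e,e')\bigl(Q_*Vf(e')-P_*Q_*Vf(e)\bigr)^2>0\quad\text{for every }e\in\mcr_*.$$

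\textbf{Contradicting the coboundary hypothesis.} Suppose the sum vanishes for some $e=(i,j)\in\mcr_*$. Then $Q_*Vf$ is constant on the support $\{(j,k):k\in\mca_{i,j}\}$ of $P_*(e,\cdot)$. By Hypothesis~\ref{hyp:unstb_eq}-(\ref{hyp:unstb_eq:A_y}), any two $k,k'\in\mca_j$ lie in a common $\mca_{i',j}$ with $(i',j)\in\mcr_*$; combined with the description of $\mcr_*$ in Lemma~\ref{lem:pi1=pi2}, this propagates the local constancy and produces $\tilde g:\mca\to\Rset$ with $Q_*Vf((p,q))=\tilde g(p)$ for every $(p,q)\in\mcr_*$. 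Plugging into the identity $Q_*Vf(e)-P_*Q_*Vf(e)=Vf(e)-\pi^V(v_*)f$ (a pointwise consequence of $Q_*(I-P_*)=I-\Pi_*$), and noting that $P_*$ sends $(p,q)$ to edges of the form $(q,\cdot)$ so that $P_*Q_*Vf((p,q))=\tilde g(q)$, one obtains
$$Vf(p,q)=C+\tilde g(p)-\tilde g(q),\qquad (p,q)\in\mcr_*,\ \ C:=\pi^V(v_*)f,$$
directly contradicting Hypothesis~\ref{hyp:unstb_eq}-(\ref{hyp:unstb_eq:G_g}).

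\textbf{Main obstacle.} The most delicate point is justifying the reduction to $e\in\mcr_*$: the recurrent class of $P_n$ need not coincide with $\mcr_*$, a manifestation of the slow-mixing difficulty emphasized in the introduction. Controlling rigorously that $E_n$ enters and remains in $\mcr_*$ almost surely on $\{v_n\to v_*\}$, and assembling this with standard $L^2$ martingale estimates on $\epsilon_{n+1}$ and $r_{n+1}$ so as to feed the non-convergence principle, constitutes the technical heart of the proof.
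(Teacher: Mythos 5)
Your overall strategy is the right one in spirit, but there are two places where the argument as you have laid it out would not go through, and the paper resolves both by a single device you did not anticipate.

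First, and most importantly, the paper does \emph{not} reduce to showing that $E_n$ eventually enters $\mcr_*$. Instead, it controls the \emph{multi-step} conditional variance. Using Lemma~\ref{lem:K_*_S_*} to choose $m$ so that $\mcr_*\subset\bigcup_{q=0}^m\supp(P_*^q(e,\cdot))$ for every $e\in\OEset$, the paper verifies the condition of Corollary~3.IV.15 of Duflo in the form
$$\liminf_{n\to\infty}\Esp\Bigl[\sum_{q=0}^m (\epsilon_{n+q+1}f)^2\,\Big|\,\mcf_n\Bigr]\ge I_*:=\inf_{e\in\OEset}\sum_{q=0}^m P_*^q\varphi_*(e),$$
where $\varphi_v(e)=\mathrm{Var}_{P(v)(e,\cdot)}(Q(v)Vf)$. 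This makes the lower bound uniform over \emph{all} $e\in\OEset$, not just $e\in\mcr_*$, and thereby sidesteps the very reduction that you flag in your last paragraph as "the technical heart." You correctly identify that controlling whether $E_n$ enters and remains in $\mcr_*$ is delicate -- indeed when $v_*\in\partial\Sigma$ the walk may revisit states outside $\supp(\pi_*)$ at a slowly decaying rate -- but your approach leaves that step unresolved, whereas the paper's $m$-step trick dissolves the difficulty entirely. (The paper also works with the shifted sequence $z_n=v_n-\tfrac1{n+d}P_nQ_nV(E_n)$ to put the recursion in exactly the form required by Duflo's corollary, a point you don't address.)

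Second, your coboundary contradiction starts from "suppose the sum vanishes for \emph{some} $e=(i,j)\in\mcr_*$" and then attempts to propagate constancy of $Q_*Vf$ via Hypothesis~\ref{hyp:unstb_eq}-\eqref{hyp:unstb_eq:A_y}. That propagation does not work from a single vanishing variance: Hypothesis~\ref{hyp:unstb_eq}-\eqref{hyp:unstb_eq:A_y} lets you glue the local constancy on $\mca_{i,j}$ and $\mca_{i',j}$ only if you \emph{also} know $\varphi_*(i',j)=0$, which you have not yet established. The paper gets $\varphi_*(e')=0$ for \emph{all} $e'\in\mcr_*$ as an immediate consequence of $I_*=0$ combined with Lemma~\ref{lem:K_*_S_*}, and only then runs the coboundary argument. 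So your final step is logically incomplete as written; with the multi-step variance the deduction "$I_*=0 \Rightarrow \varphi_*|_{\mcr_*}\equiv 0$" is exactly what repairs it.

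A smaller point: Theorem~7.3 of \cite{Benaim1999} is a \emph{convergence to attractors} result and is what the paper uses for Theorem~\ref{th:st_cv}; the non-convergence principle it invokes here is Corollary~3.IV.15 of~\cite{Duflo1996}, which requires the $O(1/n)$ bound on the remainder $\tilde r_{n+1}$ (supplied via Hypothesis~\ref{hyp:unstb_eq}-\eqref{hyp:unstb_eq:ball}) and the multi-step variance lower bound just discussed.
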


\begin{proof}
Along this proof, $C$ will denote a non-random positive constant that may vary from lines to lines.
Equations \eqref{eq:v(n+1)_v(n)} and \eqref{def:U_n} imply that
$$v_{n+1}-v_n=\frac{1}{n+d+1} \big( F(v_n) + \epsilon_{n+1} + r_{n+1}\big)\;.$$ 
The expression of $\epsilon_{n+1}$ and $r_{n+1}$ are given by \eqref{def:epsilon_n} and \eqref{def:r_n} and we recall that ${\Esp[\epsilon_{n+1} |\mcf_n]=0}$ (see \eqref{eq:eps_martingale}). For $n\in\Nset$, set
\begin{align*}
z_n=v_n-\frac{1}{n+d}\big(P_nQ_n V(E_n)\big)\;.
\end{align*}
Note that $z_n\in T_1\Delta_R$. Indeed, using \eqref{eq:V1=1} and the definition of $Q_n$ (see \eqref{def:Q})
\begin{eqnarray*}
P_nQ_n{V}\one_R(E_n)&=&P_nQ_n\one_{\OEset}(E_n)=0\;.
\end{eqnarray*}

The sequence $(z_n)$ is a stochastic algorithm of step $1/(n+d)$\,:  for all $n$, 
\begin{eqnarray*}
z_{n+1}-z_n
&=&\frac{1}{n+d+1} \big( F(z_n) + \epsilon_{n+1} + \tilde r_{n+1}\big)\;,
\end{eqnarray*}
where 
\begin{eqnarray*}
\tilde{r}_{n+1}&=&F(v_n)-F(z_n)+ r_{n+1,1}+r_{n+1,3}\;.
\end{eqnarray*}

By using \eqref{eq:QV}, $\norm{z_n-v_n}\le C/n$ so that  $\{z_n\to v_*\}=\{v_n\to v_*\}$.
Thus, to prove Theorem \ref{th:unst_non_cv}, we will apply Corollary 3.IV.15, p.126 in~\cite{Duflo1996} to  $(z_n)$.

\begin{lemma}
On $\{v_n\to v_*\}$, we have $\tilde{r}_{n+1}=O\big(\frac{1}{n}\big)$.
\end{lemma}

\begin{proof}
Hypothesis~\ref{hyp:th}-\eqref{hyp:th:pi} implies that $F$ is Lipschitz on $T_1\Delta_R$. Thus we have that $\norm{F(v_n)-F(z_n)}\le C/n$, for all ${n\ge 1}$. We also have $\norm{r_{n+1,1}}\le C/n$ (see \eqref{eq:QV}). 

Let $e\in\OEset$ and $n_0$ be an integer such that for all $n\ge n_0$, $v_n\in\mcb(v_*,r)\cap\Sigma$, with ${r>0}$ defined as in Hypothesis~{\ref{hyp:unstb_eq}-\eqref{hyp:unstb_eq:ball}}. Let $n\ge n_0$, then using Hypothesis~\mbox{\ref{hyp:unstb_eq}-\eqref{hyp:unstb_eq:ball}}, the map $v\mapsto Q(v){V}(e)$ is Lipschitz on $\mcb(v_*,r)\cap\Sigma$. Since $|\OEset|<\infty$, the Lipschitz constants of these mappings are uniformly bounded in $e\in\OEset$, and
\begin{align*}
\norm{r_{n+1,3} }&=\norm{(Q(v_{n+1})-Q(v_n)){V}(E_{n+1}) +(\pi^V(v_{n+1})-\pi^V(v_n))}\le C/n\;.
\end{align*}
Hence ${\tilde{r}_{n+1}=O\big(\frac{1}{n}\big)}$ on $\{{v_n\to v_*\}}$.
\end{proof}

The previous lemma directly implies that on $\{v_n\to v_*\}$, $\sum_n \norm{\tilde{r}_{n+1}}^2<\infty$.

Let $m$ be a positive integer such that \eqref{eq:K_*_S_*} is verified. To achieve this proof, according to Corollary~3.IV.15, p126 of \cite{Duflo1996}, it remains to show that on ${\{v_n\to v_*\}}$,
\begin{eqnarray}
\liminf_{n\to\infty} \Esp\left[\left.\sum_{q=0}^{m} (\epsilon_{n+q+1}f)^2 \right|\mcf_n\right]>0\;.
\end{eqnarray}

Let $\mu\in\Delta_{\OEset}$ and $G:\OEset\to\Rset$. Define the variance $\hbox{Var}_\mu(G)$ by
\begin{align}
\label{eq:var=var1}
\hbox{Var}_\mu(G)&=\mu G^2-(\mu G)^2\\
&=\label{eq:var=var2}
\frac{1}{2}\sum_{e,e'\in\OEset}\mu(e)\mu(e') \big(G(e)-G(e')\big)^2\;.
\end{align}

Recall that the conditional law of $E_{n+1}$ with respect to $\mcf_n$ is $P_n(E_n,\cdot)=P(v_n)(E_n,\cdot)$. The conditional mean and variance with respect to $\mcf_{n}$ of $Q_n  V f(E_n)$ are respectively $P_nQ_n Vf(E_n)$ and $\Esp[(\epsilon_{n+1}f)^2 |\mcf_{n}]=\varphi_{v_n}(E_n)$, where
\begin{align*}
\varphi_v(e)=\hbox{Var}_{P(v)(e,\cdot)}(Q(v)  Vf)\;,
\end{align*}
for all $v\in\Sigma$ and $e\in\OEset$. We denote $\varphi_{v_*}$ by $\varphi_*$.

\begin{lemma}
\label{lem:phi_v(e) Lips}
For each $e\in\OEset$, the map $v\mapsto \varphi_v(e)$ is Lipschitz on $\mcb(v_*, r)\cap\Sigma$.
\end{lemma}
\begin{proof}
Indeed, $(\mu,G)\mapsto \hbox{Var}_\mu(G)$ is Lipschitz.
Moreover, for all $e\in\OEset$, ${v\mapsto P(v)(e,\cdot)}$ is Lipschitz on $\Sigma$ and by using Hypothesis \ref{hyp:unstb_eq}-\eqref{hyp:unstb_eq:ball}, $v\mapsto Q(v) V f$ is Lipschitz on $\mcb(v_*,r)\cap\Sigma$. We conclude using the property that the composition of two Lipschitz functions is Lipschitz.
\end{proof}

By using several times Lemma~\ref{lem:phi_v(e) Lips}, let us prove that on $\{v_n\to v_*\}$, we have
\begin{equation}
\label{eq:phi0}
\Esp\left[\left.(\epsilon_{n+q+1}f)^2 \right|\mcf_n\right] 
= P^{q}_*\varphi_*(E_n)+O\left(\frac{1}{n}+\norm{v_n-v_*}\right) \hbox{, for all $q\in\{0,\dots,m\}$}.
\end{equation}

We have for all $q\in\{0,\dots,m\}$,
\begin{eqnarray}
\Esp\left[\left.\big(\epsilon_{n+q+1}f\big)^2 \right|\mcf_n\right] &=&\Esp[\varphi_{v_{n+q}}(E_{n+q})|\mcf_n]\nonumber\\
\label{eq:phi1}&=&\Esp[\varphi_{v_{n}}(E_{n+q})|\mcf_n]+\Esp[\big(\varphi_{v_{n+q}}-\varphi_{v_{n}}\big)(E_{n+q})|\mcf_n]\;.
\end{eqnarray}

Let $r>0$ be defined as in Hypothesis \ref{hyp:unstb_eq}-\eqref{hyp:unstb_eq:ball}. Notice that $\{v_n\to v_*\}\subset \bigcup_{n'} \Omega_{n',r}$, where $\Omega_{n',r}=\{v_n\in\mcb(v_*,r/2)\cap\Sigma, \forall n\ge n'\}$, for all $n'\in\Nset^*$. Let $n_1$ be a positive integer such that $\frac{2m}{n_1+d}\le r/2$. Then for all $n\ge n_1$,
\begin{eqnarray*}
\sup_{0\le q \le m} \norm{v_{n+q}-v_n}\le r/2\;.
\end{eqnarray*}
Indeed, for all $q\in\{0,\dots,m\}$,
\begin{eqnarray*}
v_{n+q}-v_n&=&\left(\frac{n+d}{n+d+q}-1\right)v_n+\frac{1}{n+d+q}\sum_{k=n+1}^{n+q}{V}(E_k)\;.
\end{eqnarray*}
Thus $\sup_{0\le q \le m} \norm{v_{n+q}-v_n}\le \frac{2m}{n+d}\le r/2$.

Fix $n\ge n_1$ and $q\in\{0,\dots,m\}$.
On $\Omega_{n_1,r}$, we have $v_n\in\mcb(v_*,r/2)$ and
\begin{eqnarray*}
{\Esp[\big(\varphi_{v_{n+q}}-\varphi_{v_{n}}\big)(E_{n+q})|\mcf_n]}&=&{\Esp[\big(\varphi_{v_{n+q}}-\varphi_{v_{n}}\big)(E_{n+q})\Ind_{\{v_n\in\mcb(v_*,r/2)\}}|\mcf_n]}\;.
\end{eqnarray*}
Since $v_n\in\mcb(v_*,r/2)$ implies that $v_{n+q}\in\mcb(v_*,r)$, using Lemma~\ref{lem:phi_v(e) Lips}, we have that on $\Omega_{n_1,r}$,
\begin{eqnarray}
\label{eq:phi2}
\abs{\Esp[\big(\varphi_{v_{n+q}}-\varphi_{v_{n}}\big)(E_{n+q})|\mcf_n]}&\le&C\;\Esp[\norm{v_{n+q}-v_n}|\mcf_n]\le C/n\;.
\end{eqnarray}
Using again Lemma~\ref{lem:phi_v(e) Lips}, on $\Omega_{n_1,r}$, 
\begin{eqnarray*}
\Esp[\varphi_{v_{n}}(E_{n+q})|\mcf_n]&=&\Esp[\varphi_{*}(E_{n+q})|\mcf_n]+O(\norm{v_n-v_*})\;.
\end{eqnarray*}
Moreover on $\Omega_{n_1,r}$,
\begin{eqnarray*}
\Esp[\varphi_*(E_{n+q})|\mcf_n]&=&\Esp[\Esp[\varphi_*(E_{n+q})|\mcf_{n+q-1}]|\mcf_n]\\
&=&\Esp[P(v_{n+q-1})\varphi_*(E_{n+q-1})|\mcf_n]\\
&=&\Esp[P_*\varphi_*(E_{n+q-1})|\mcf_n]+O\left(\frac{1}{n}+\norm{v_n-v_*}\right)\;,
\end{eqnarray*}
where the fact that $P(v_{n+q-1})(E_{n+q-1},\cdot)$ is the conditional law with respect to $\mcf_{n+q-1}$ of $E_{n+q}$ is used for the second equality and the facts that 
$$\norm{v_{n+q-1}-v_*}\le C/n+\norm{v_n-v_*}$$
and that $P$ is Lipschitz on $\Sigma$ are used for the third equality. Finally by repeating $q$ times the last computations, we have on $\Omega_{n_1,r}$
\begin{eqnarray}
\label{eq:phi3}
\Esp[\varphi_*(E_{n+q})|\mcf_n]&=&P^q_*\varphi_*(E_n)+O\left(\frac{1}{n}+\norm{v_n-v_*}\right)\;.
\end{eqnarray}

Thus by using \eqref{eq:phi1}, \eqref{eq:phi2} and \eqref{eq:phi3}, we obtain that \eqref{eq:phi0} holds on $\Omega_{n_1,r}$. Thus \eqref{eq:phi0} holds on $\{v_n\to v_*\}\subset \bigcup_{n'} \Omega_{n',r}$, 
which implies that on $\{v_n\to v_*\}$,
\begin{eqnarray*}
\Esp\left[\left.\sum_{q=0}^m (\epsilon_{n+q+1}f)^2\right|\mcf_n \right]&\ge& \inf_{e\in\OEset} \left(\sum_{q=0}^{m} P^{q}_*\varphi_*(e)\right) + O\left(\frac{1}{n}+\norm{v_n-v_*}\right)\;.
\end{eqnarray*}
Thus on $\{v_n\to v_*\}$,
\begin{eqnarray*}
\liminf_{n\to\infty} \Esp\left[\left.\sum_{q=0}^{m} (\epsilon_{n+q+1}f)^2 \right|\mcf_n\right]\ge I_*:= \inf_{e\in\OEset} \left(\sum_{q=0}^{m} P^{q}_*\varphi_*(e)\right)\;.
\end{eqnarray*}

We now prove that $I_*>0$. To this purpose, suppose that $I_*=0$. 
This implies that $\varphi_*(e')=0$, for all $e'\in\mcr_*$. Indeed, if $I_*=0$, there is ${e\in\OEset}$, 
such that $P^{q}_*\varphi_*(e)=0$ for all $q\in\{0,\dots,m\}$. 
Thus $\varphi_*(e')=0$ for all ${e'\in\bigcup_{q=0}^{m} \supp(P_*^q(e,\cdot))}$, 
i.e. there is $q\in\{0,\dots,m\}$ such that $P^q_*(e,e')>0$. 
Therefore, using Lemma \ref{lem:K_*_S_*}, we have $\varphi_*(e')=0$ for all $e'\in\mcr_*$.

Set $G=Q_* V f$. Using \eqref{eq:var=var2}, we have that for each $(i,j)\in\mcr_*$,  $\varphi_*(i,j)=0$ implies that $k\mapsto G(j,k)$ is constant on $\mca_{i,j}$. 
Therefore this with Hypothesis~{\ref{hyp:unstb_eq}-\eqref{hyp:unstb_eq:A_y}} imply that for each $j\in \mca$,
there is a constant $g(j)$ such that $G(j,k)=g(j)$ for all $k\in\mca_j$.

On one hand, using \eqref{def:Q},
\begin{eqnarray*}
(I-P_*)G(i,j)&=& V f(i,j)-\Pi_* V f\;,
\end{eqnarray*}
where $\Pi_*(e,e')=\pi_*(e')$, for all $e,e'\in\OEset$. Remark that $\Pi_* V f=\pi^V(v_*)f$ is a constant.
On the other hand,
\begin{eqnarray*}
(I-P_*)G(i,j)&=&g(i)-\sum_{k\in\mca_{i,j}} P_*((i,j),(j,k)) G(j,k)\\
&=&g(i)-g(j)\;.
\end{eqnarray*}

Hence we have proved that if $I_*=0$, then there exists a map $g:\mca\to\Rset$ such that $ V f(i,j)=\pi^V(v_*)f+g(i)-g(j)$ for all $(i,j)\in\mcr_*$. This is impossible by Hypothesis~${\mbox{\ref{hyp:unstb_eq}-\eqref{hyp:unstb_eq:G_g}}}$. Thus $I_*>0$ and $\Pr(v_n\to v_*)=0$.
\end{proof}

\section{Vertex reinforced non-backtracking random walks}
\label{sec:The FVRRW}
	\subsection{Definitions}
	\label{sec:frrw:definition}

Let $G=(\Vset,\NOEset)$ be the complete graph with $N\ge 4$ vertices. 
Then $\Vset=\{1,\dots,N\}$ and $\NOEset=\{\{i,j\}:i,j\in\Vset,i\neq j\}$.
In this section, the reinforcement set $R$ is the set of vertices $\Vset$ and the walk is vertex reinforced. 
Set $$\Sigma=\{v\in\Delta_\Vset: \max(v)\le 1/3+\min(v)\}\;.$$ 
Note that $\partial \Sigma= \{v\in\Delta_\Vset: \max(v)= 1/3+\min(v)\}\cup\{v\in\Sigma: \exists i\in\Vset,v(i)=0\}$. 
\begin{remark}
\label{rem:sigma3} Measures in $\Sigma^3$ (defined in \eqref{def:A^k}) are uniform on a subset of $\Vset$ containing exactly three points.
The support of any measure in $\mathring{\Sigma}$ contains at least four points, i.e. ${\mathring{\Sigma}\subset \Sigma\setminus \Sigma^3}$.
\end{remark}
Let ${V}:\OEset\times\Vset\to\Rset$ be the transition matrix from $\OEset$ to $\Vset$ defined by
\begin{align}
\label{def:frrw:V}
 V ((i,j),k)&=\delta_j(k)\;, \hbox{ for $(i,j)\in\OEset$ and $k\in \Vset$}.
\end{align}
Set $\alpha\ge 1$ and let $P:\mathring{\Delta}_\Vset\to\mcm_{\OEset}$ be the map defined by
\begin{align}
\label{def:frrw:K}
P(v)\big((i,j),(j',k)\big)=\frac{v(k)^\alpha}{\displaystyle\sum_{k'\in\Vset\setminus\{i,j\} }v(k')^\alpha\;} \;\Ind_{j=j'}\Ind_{i\neq k}\;,
\end{align}
for all $v\in\Sigma$ and $(i,j),(j',k)\in\OEset$. 
Let $(X_n,P_n,V_n)$ be a process associated to $V$ and $P$ as in Section \ref{sec:a class of RRW}. 
Then it is easy to check that $X$ is a VRNBW associated to the reinforcement function $W(k)=(1+k)^\alpha$.
Recall that $V_n=\delta_{X_n}$ and $P_n=P(v_n)$, with $v_n$ the empirical occupation measure of the vertices by $X_n$, defined by
\begin{equation}\label{def:vn}v_n=\frac{1}{n+N}\left(1+\sum_{k=1}^n\delta_{X_k}\right).\end{equation}

In this section, we prove Theorem \ref{th:frrw:vn CV} announced in the introduction.
Theorem~\ref{th:frrw:vn CV} is a consequence of Theorem~\ref{th:The_theorem}, Corollary~\ref{cor:lyapunov}, Theorem~\ref{th:st_cv}, Theorem~\ref{th:unst_non_cv}, Lemma~\ref{lem:localization1} and Lemma~\ref{lem:localization2}.
To apply Theorem~\ref{th:The_theorem} to VRNBWs, we verify Hypotheses~\ref{hyp:K} in Sections~\ref{sec:frrw:hyp:def:vn} and~\ref{sec:hyp:K} and Hypotheses~\ref{hyp:th} in Sections~\ref{sec:piV} and~\ref{sec:QV}.
To apply Corollary~\ref{cor:lyapunov}, we prove in Section~\ref{sec:lyap} that there exists a strict Lyapounov function and, in Sections~\ref{sec:equilibriums_a=1}, \ref{sec:equilibriums_a>1}, \ref{SIgmakn} and \ref{cupEkn} that there is a finite number of equilibriums. 
The stability of the equilibriums is also discussed in these sections.
Finally, applying Theorems~\ref{th:st_cv} and~\ref{th:unst_non_cv}, Theorem \ref{th:frrw:vn CV} is finally proved in Section~\ref{prfthm}.

	\subsection{Hypotheses \ref{hyp:K}-(\ref{hyp:def})}
	\label{sec:frrw:hyp:def:vn}
We verify here that $v_n\in\mathring{\Sigma}$ for $n\ge 0$.

\begin{lemma} \label{lem:v<=1/3} For all $n\ge 0$, we have $\max(v_n)\le \frac{1}{3}\times \frac{n+5}{n+N}$.  \end{lemma}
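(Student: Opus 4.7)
The plan is to unfold the definitions and exploit only the non-backtracking constraint. Recall that $d=|R|=N$ here and, by~\eqref{def:vn},
\[
v_n(i)=\frac{1+Z_n(i)}{n+N},\qquad Z_n(i):=\#\{1\le k\le n:\, X_k=i\}.
\]
So the inequality $\max(v_n)\le \frac{1}{3}\cdot\frac{n+5}{n+N}$ is equivalent to the deterministic claim that, for every vertex $i\in\Vset$ and every $n\ge 0$,
\[
Z_n(i)\le \frac{n+2}{3}.
\]

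To prove this, I would first observe that the non-backtracking rule, combined with the absence of self-loops in $G$, forces consecutive visits to the same vertex to be separated by at least three time steps. Indeed, if $X_k=i$ then $X_{k+1}\ne i$ (since $X_{k+1}\sim X_k$ and $G$ has no loops) and $X_{k+2}\ne X_k=i$ (by the non-backtracking rule $X_{k+2}\ne X_k$). So if $1\le t_1<t_2<\dots<t_{Z_n(i)}\le n$ are the times in $\{1,\dots,n\}$ at which $X$ visits $i$, then $t_{j+1}\ge t_j+3$ for each $j$, giving
\[
t_{Z_n(i)}\ge t_1+3\bigl(Z_n(i)-1\bigr)\ge 1+3\bigl(Z_n(i)-1\bigr).
\]
Combined with $t_{Z_n(i)}\le n$, this yields $Z_n(i)\le (n+2)/3$, as desired.

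The only subtle point is the role of $X_0$: if $X_0=i$, one must check that the restriction to times $\{1,\dots,n\}$ still obeys the spacing bound. But $X_0=i$ makes the constraint stronger rather than weaker, since it forbids $X_1=i$ and $X_2=i$, so $t_1\ge 3$ in that case and the same (or a better) bound on $Z_n(i)$ follows. I expect no real obstacle here; this case split is the only place where one has to be mildly careful. Writing everything out gives
\[
1+Z_n(i)\le 1+\frac{n+2}{3}=\frac{n+5}{3},
\]
and dividing by $n+N$ yields the lemma after taking the maximum over $i$.
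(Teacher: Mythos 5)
Your proof is correct and rests on exactly the same observation as the paper's: the non-backtracking rule together with the absence of loops prevents any vertex from being visited more than once in three consecutive steps, so $Z_n(i)\le (n+2)/3$. The paper encodes this by noting $Z_{n+3}(i)-Z_n(i)\le 1$ and telescoping over windows of length three, whereas you argue via the spacing $t_{j+1}\ge t_j+3$ between successive visit times; these are two equivalent bookkeepings of the same idea, and your remark about $X_0=i$ is harmless but unnecessary since $t_1\ge1$ already suffices.
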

\begin{proof} Set $i\in \Vset$.
For all $n\ge 0$, we have $|\{0\le \ell\le 2\,:\, X_{n+\ell}=i\}|\le 1$. Thus, if ${Z_n(i)}$ denotes the number of times the walk $X$ visits $i$ before time $n$, then for all ${n\ge 0}$, $Z_{n+3}(i)-Z_{n}(i)\le 1$.
Therefore, for all $n\ge 0$, $Z_{3n}(i)\le n$, $Z_{3n+1}(i)\le n+1$ and $Z_{3n+2}(i)\le n+1$. Thus, for all $n\ge 0$, $\max(Z_n)\le (n+2)/3$. The lemma follows from the fact that $\max(v_n)\le (1+\max(Z_n))/(n+N)$. 
\end{proof}

A first consequence of this lemma is that the only possible limit points $v$ of $(v_n)$ are such that $v(i)\le 1/3$ for all $i$.

\begin{proposition}
\label{lem:frrw:hyp:def:vn} $v_n\in\mathring{\Sigma}$ for all $n$.
\end{proposition}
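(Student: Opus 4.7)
The plan is to unwind the definition of $\mathring{\Sigma}$ and show both required strict inequalities: $\min(v_n) > 0$ and $\max(v_n) < 1/3 + \min(v_n)$. From the description of $\partial\Sigma$ given just after the definition of $\Sigma$, a measure $v\in\Sigma$ lies in $\mathring{\Sigma}$ if and only if $v(i)>0$ for every $i\in\Vset$ and $\max(v)<1/3+\min(v)$.

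The first inequality is immediate: from the explicit formula \eqref{def:vn} one has, for every $i\in\Vset$ and every $n\ge 0$,
\begin{equation*}
v_n(i)=\frac{1+Z_n(i)}{n+N}\ge\frac{1}{n+N}>0,
\end{equation*}
so that $\min(v_n)\ge 1/(n+N)>0$.

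For the second inequality, I would feed Lemma~\ref{lem:v<=1/3} into this bound. It gives
\begin{equation*}
\max(v_n)\le\frac{n+5}{3(n+N)}\quad\text{and}\quad \frac{1}{3}+\min(v_n)\ge\frac{1}{3}+\frac{1}{n+N}=\frac{n+N+3}{3(n+N)},
\end{equation*}
so the desired strict inequality reduces to $n+5<n+N+3$, i.e.\ to $N>2$. Since the hypothesis of this section is $N\ge 4$, this is automatic, and one concludes $\max(v_n)<1/3+\min(v_n)$, hence $v_n\in\mathring{\Sigma}$.

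There is no real obstacle here: the proposition is essentially a bookkeeping consequence of Lemma~\ref{lem:v<=1/3} (which is itself the substantive input, using the non-backtracking constraint to ensure $Z_{n+3}(i)-Z_n(i)\le 1$) and of the trivial lower bound $v_n(i)\ge 1/(n+N)$ coming from the initialisation ``$1+\sum_{k=1}^n\delta_{X_k}$'' in the definition of $v_n$. The only mild point to watch is that one must verify the \emph{strict} inequality, which is exactly what gives rise to the condition $N>2$ and justifies the standing assumption $N\ge 4$.
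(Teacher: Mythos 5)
Your proof is correct and follows essentially the same route as the paper: invoke Lemma~\ref{lem:v<=1/3} for the upper bound on $\max(v_n)$, the elementary lower bound $\min(v_n)\ge 1/(n+N)$ from the initialisation in \eqref{def:vn}, and note that the resulting inequality $\max(v_n)-\min(v_n)\le \frac{1}{3}\cdot\frac{n+2}{n+N}<\frac{1}{3}$ forces $N\ge 4$ (in fact $N>2$). The paper's proof states more tersely that ``$v_n\in\mathring{\Sigma}$ iff $\max(v_n)<1/3+\min(v_n)$'' without explicitly flagging the $\min(v_n)>0$ requirement; your version spells that out, which is a small but harmless improvement in rigour.
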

\begin{proof}
Note that $v_n\in\mathring{\Sigma}$ if and only if $\max(v_n)< 1/3+\min(v_n)$.
Lemma \ref{lem:v<=1/3} and the fact that for all $n\ge 0$, $\min(v_n)\ge 1/(n+N)$, 
imply that $\max(v_n)-\min(v_n)\le \frac{1}{3}\times \frac{n+2}{n+N}$ which is lower than $1/3$ since $N\ge 4$.
\end{proof}

	\subsection{Hypotheses \ref{hyp:K}-\eqref{hyp:K:Lips}-\eqref{hyp:K:ind}}
	\label{sec:hyp:K}
Since the denominator of \eqref{def:frrw:K} doesn't vanish for all $v\in\Sigma$, the map $P$ is $\mcc^1$ on $\Sigma$ and Hypothesis \ref{hyp:K}-\eqref{hyp:K:Lips} is verified. 
Hypothesis \ref{hyp:K}-\eqref{hyp:K:ind} directly follows from the proposition below, after remarking that $\mathring{\Sigma}\subset\Sigma\setminus \Sigma^3$.
\begin{proposition}
\label{prop:K_ind_ex1}
The matrix $P(v)$ is indecomposable for all $v \in \Sigma\setminus\Sigma^3$.
\end{proposition}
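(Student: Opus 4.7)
First I would sharpen the assumption by analysing the support. For $v\in\Sigma$ with $|\supp(v)|<N$ one has $\min(v)=0$, and the defining inequality $\max(v)\le 1/3+\min(v)$ then forces $\max(v)\le 1/3$; since the positive masses sum to $1$, this yields $|\supp(v)|\ge 3$, with equality only when all three positive entries equal $1/3$, i.e.\ $v\in\Sigma^3$. Combining with the trivial case $|\supp(v)|=N\ge 4$ and ruling out $\Sigma^3$, I obtain $|S|\ge 4$, where $S:=\supp(v)$.

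Next I would propose as candidate recurrent class $\mathcal{R}:=\{(i,j)\in\OEset:\,i\in S,\,j\in S\}$. From \eqref{def:frrw:K}, the transition $(a,b)\related{P(v)}(b,c)$ has positive probability iff $c\in S\setminus\{a\}$, the constraint $c\neq b$ being automatic since $(b,c)\in\OEset$. In particular, for any starting edge $(i_0,j_0)\in\OEset$, choosing $k\in S\setminus\{i_0,j_0\}$ and then $\ell\in S\setminus\{j_0,k\}$ (both non-empty thanks to $|S|\ge 4$) produces a two-step positive-probability path $(i_0,j_0)\to(j_0,k)\to(k,\ell)$ into $\mathcal{R}$. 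So matching the paper's definition of indecomposability reduces to showing that the chain is irreducible when restricted to $\mathcal{R}$.

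The engine of this irreducibility is a \emph{reversal lemma}: from any $(a,b)\in\mathcal{R}$, the reverse edge $(b,a)$ is reached along the four steps $(a,b)\to(b,k)\to(k,\ell)\to(\ell,b)\to(b,a)$, where $k$ and $\ell$ are picked as two distinct elements of $S\setminus\{a,b\}$ (which contains at least two elements since $|S|\ge 4$); the admissibility conditions $k\neq a$, $\ell\neq b$, $b\neq k$ and $a\neq\ell$ are each immediate. A direct corollary is that from $(a,b)\in\mathcal{R}$ one can reach $(b,c)$ for \emph{every} $c\in S\setminus\{b\}$: in one step when $c\neq a$, and via the reversal when $c=a$. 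Iterating this single-step freedom, I would establish by induction on path length that from $(a,b)\in\mathcal{R}$ one can realise any prescribed non-backtracking vertex walk in $S$ beginning $a,b,\ldots\,$, and then conclude by a short case distinction on the relative positions of $(a,b)$ and an arbitrary target $(i,j)\in\mathcal{R}$ that $(i,j)$ is reachable.

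The main point requiring care is this case distinction, notably when the target is the reverse of the current edge (i.e.\ $i=b$, $j=a$), which is the configuration in which the reversal lemma is indispensable; in every such case the hypothesis $|S|\ge 4$ supplies at least one ``free'' vertex through which to reroute. This is also precisely why $\Sigma^3$ must be excluded: when $|S|=3$, the chain on $\OEset(S)$ decomposes into two disjoint oriented $3$-cycles and is not indecomposable.
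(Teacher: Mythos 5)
Your proof is correct, and it uses the same basic mechanism as the paper: identify the recurrent class as $\{(i,j)\in\OEset:\,v_i>0,\,v_j>0\}$ and construct explicit positive-probability paths exploiting that $|\supp(v)|\ge 4$ furnishes spare vertices to detour through. Where you differ is in organization. The paper proves reachability of an arbitrary $(i_3,i_4)$ in the recurrent class from an arbitrary $(i_1,i_2)\in\OEset$ by a flat four-way case analysis on $|\{i_1,i_2,i_3,i_4\}|$ and on whether $i_2=i_3$, exhibiting a bespoke path in each case. You instead factor the argument through a \emph{reversal lemma} (from $(a,b)$ reach $(b,a)$ via a four-step detour through two fresh support vertices), whose corollary is total one-step freedom: from $(a,b)$ you may continue to $(b,c)$ for every $c\in\supp(v)\setminus\{b\}$. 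Two applications of this corollary already carry you to an arbitrary target $(i,j)$ in the recurrent class (go to $(b,i)$, then to $(i,j)$, treating $i=b$ directly), so the full induction on path length you invoke is slightly more machinery than needed. The reversal lemma buys you a bit of modularity and spares you enumerating the overlap patterns of $\{i_1,\dots,i_4\}$; the paper's case split is more direct but checks essentially the same short list of non-equalities against the stock of at least four support vertices. Net, same route, different packaging.
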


\begin{proof}
Let $v\in\Sigma\setminus\Sigma^3$, then by Remark \ref{rem:sigma3}, the support of $v$ contains at least four points. We will prove that the matrix $P(v)$ is indecomposable and that its recurrent class is $\mathcal{S}=\{(i,j)\in\OEset:v_i>0, v_j>0\}$. Recall that $(i,i)\notin \OEset$ and that $e\related{P(v)} e'$ means $P(v)(e,e')>0$.
Let $(i_1,i_2)\in\OEset$ and $(i_3,i_4) \in\mathcal{S}$.

\noindent\textbf{Case 1: ${\bf |\{i_1,i_2,i_3,i_4\}|=4}$.}
Since $i_3\notin\{i_1,i_2\}$, $i_4\notin\{i_2,i_3\}$,
$$(i_1,i_2)\related{P(v)} (i_2,i_3)\related{P(v)} (i_3,i_4)\;.$$
\textbf{Case 2: ${\bf |\{i_1,i_2,i_3,i_4\}|=3}$ and ${\bf i_2\neq i_3}$.}
Since the support of $v$ contains at least four points, there exists $i\in \hbox{Supp}(v)\setminus\{i_1,i_2,i_3,i_4\}$. Thus
$$(i_1,i_2)\related{P(v)} (i_2,i) \related{P(v)}(i,i_3) \related{P(v)} (i_3,i_4)\;.$$
\textbf{Case 3: ${\bf |\{i_1,i_2,i_3,i_4\}|=3}$ and ${\bf i_2= i_3}$.}
In this case, $(i_1,i_2)\related{P(v)}(i_3,i_4)\;.$\\
\textbf{Case 4: ${\bf |\{i_1,i_2,i_3,i_4\}|=2}$.}
In this case $\{i_1,i_2\}=\{i_3,i_4\}$. Since the support of $v$ contains at least four points, there exist $i,j\in\hbox{Supp}(v)\setminus\{i_1,i_2\}$ with $i\neq j$. Thus
$$(i_1,i_2)\related{P(v)}(i_2,i)\related{P(v)}(i,j)\related{P(v)}(j,i_3)\related{P(v)}(i_3,i_4)\;.$$

Consequently $P(v)$ is indecomposable for all $v\in \Sigma\setminus\Sigma^3$.
\end{proof}

\begin{remark}
For $v\in\Sigma^3$, the matrix $P(v)$ is not indecomposable. Indeed, $v$ is uniform on exactly three different points $\{i_1,i_2,i_3\}$. There are two irreducible classes $\mcr_1$ and $\mcr_2$, with ${\mcr_1=\{(i_1,i_2),(i_2,i_3),(i_3,i_1)\}}$ and $\mcr_2=\{(i_2,i_1),(i_1,i_3),(i_3,i_2)\}$, and we have
\begin{align*}(i_1,i_2)\related{P(v)}(i_2,i_3)\related{P(v)}(i_3,i_1)\related{P(v)}(i_1,i_2),\\
(i_2,i_1)\related{P(v)}(i_1,i_3)\related{P(v)}(i_3,i_2)\related{P(v)}(i_2,i_1).
\end{align*}
Thus $\mcr_1$ and $\mcr_2$ define two paths for the Markov chain associated to $P(v)$, i.e. vertices $i_1$, $i_2$ and $i_3$ are visited infinitely often, in the same order.
\end{remark}

	\subsection{The invariant probability measure of  $\mathbf{P(v)}$}
	\label{sec:piV} From now on,
for $v\in\Sigma$ and ${i\in\Vset}$, $v(i)$ will be denoted simply by $v_i$. There should not be any confusion with $v_n\in\Sigma$ defined by \eqref{def:vn}.
For $i\ne j\in\Vset$,  let ${H_{i,j}:\Sigma\to\mathbb{R}_+^*}$, ${H_{i}:\Sigma\to\mathbb{R}_+^*}$ and ${H:\Sigma\to\mathbb{R}_+^*}$ be the maps, which to $v\in\Sigma$ associate
\begin{align}
\label{def:ffrw:Hxy} &H_{i,j}(v)=\sum_{k\notin\{i,j\}} v_k^\alpha\;,\\
\label{def:ffrw:Hx} &H_i(v)=\sum_{j,k;\,i\neq j\neq k\neq i} v_j^\alpha v_k^\alpha = \sum_{j\neq i} v_j^\alpha H_{i,j}(v)\;,\\
\label{def:ffrw:H} &H(v)=\sum_{i,j,k;\, i\neq j\neq k\neq i} v_i^\alpha v_j^\alpha v_k^\alpha=\sum_{i}v_i^\alpha H_i(v)\;.
\end{align}

Recall that for $v\in \Sigma\setminus\Sigma^3$, $\pi(v)$ denotes the invariant probability measure of $P(v)$
and that $\pi^V(v)=\pi(v)V$ belongs to $\Delta_\Vset$. For $(i,j)\in\OEset$ and $k\in\Vset$, we use the notation $\pi_{i,j}(v)$ and $\pi^V_k(v)$ respectively for $\pi(v)(i,j)$ and for $\pi^V(v)(k)$. The expression of these measures is explicitly given in the following proposition.
\begin{proposition}
For all $v\in\Sigma\setminus\Sigma^3$, 
\begin{align}
\label{def:frrw:pi}
&\pi_{i,j}(v) = \frac{v_i^\alpha v_j^\alpha H_{i,j}(v)}{H(v)}, \hbox{ for } (i,j)\in\OEset;\\
\label{def:frrw:hat pi}
&\pi^V_k(v)=\frac{v_k^\alpha H_{k}(v)}{H(v)}, \hbox{ for } k\in\Vset.
\end{align}
Moreover Hypothesis \ref{hyp:th}-\eqref{hyp:th:pi} holds.
\end{proposition}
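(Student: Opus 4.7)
The plan is to verify the closed form \eqref{def:frrw:pi} by checking invariance and normalization directly, then deduce \eqref{def:frrw:hat pi} by integrating against $V$, and finally show that the explicit formula for $\pi^V$ extends to a Lipschitz map on all of $\Sigma$.

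For \eqref{def:frrw:pi}, since $P(v)$ is indecomposable for $v \in \Sigma\setminus\Sigma^3$ by Proposition \ref{prop:K_ind_ex1}, it suffices to check that the right-hand side defines a probability measure on $\OEset$ satisfying $\mu P(v) = \mu$. Normalization is a pure re-indexing: $\sum_{(i,j)\in\OEset} v_i^\alpha v_j^\alpha H_{i,j}(v) = H(v)$ by \eqref{def:ffrw:Hxy} and \eqref{def:ffrw:H}. For left-invariance, fix $(j',k) \in \OEset$; by \eqref{def:frrw:K}, only pairs $(i,j)$ with $j=j'$ and $i \ne k$ contribute to $(\mu P(v))(j',k)$, giving
\[
\sum_{i \notin \{j',k\}} \frac{v_i^\alpha v_{j'}^\alpha H_{i,j'}(v)}{H(v)} \cdot \frac{v_k^\alpha}{H_{i,j'}(v)} = \frac{v_{j'}^\alpha v_k^\alpha H_{j',k}(v)}{H(v)} = \pi_{j',k}(v),
\]
after the crucial cancellation of $H_{i,j'}(v)$ and re-expressing the remaining sum via \eqref{def:ffrw:Hxy}. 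Formula \eqref{def:frrw:hat pi} then follows immediately: by \eqref{def:frrw:V}, $V((i,j),k) = \Ind_{j=k}$, so $\pi^V_k(v) = \sum_{i \ne k}\pi_{i,k}(v) = \frac{v_k^\alpha}{H(v)}\sum_{i \ne k} v_i^\alpha H_{i,k}(v) = \frac{v_k^\alpha H_k(v)}{H(v)}$ by \eqref{def:ffrw:Hx}.

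For the extension statement in Hypothesis \ref{hyp:th}-\eqref{hyp:th:pi}, the key observation is that the explicit formula \eqref{def:frrw:hat pi} already makes sense on all of $\Sigma$, and I would show it defines a $C^1$ map there. First, every $v \in \Sigma$ satisfies $|\supp(v)| \ge 3$: otherwise $\min(v) = 0$ and $\max(v) \ge 1/2$, contradicting $\max(v) \le 1/3 + \min(v)$. Hence $H(v) > 0$ on $\Sigma$, and by continuity and compactness $H \ge c$ for some $c > 0$. Since $\alpha \ge 1$, the map $t\mapsto t^\alpha$ is $C^1$ on $[0,1]$, so both numerator and denominator in \eqref{def:frrw:hat pi} are $C^1$ on $\Sigma$ and their ratio is Lipschitz, yielding the desired extension.

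The main subtlety is that $\pi(v)$ itself does \emph{not} extend continuously to $\Sigma^3$, where $P(v)$ has the two recurrent classes $\mcr_1, \mcr_2$ (the two oriented cycles on $\supp(v)$) and the stationary measure is not unique. The two extremal invariant measures nonetheless share the same $V$-marginal, namely the uniform probability on $\supp(v)$, which is precisely why Hypothesis \ref{hyp:th}-\eqref{hyp:th:pi} is formulated for $\pi^V$ rather than for $\pi$, and why it can still be verified despite the failure of indecomposability on $\Sigma^3$.
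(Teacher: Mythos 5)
Your proposal is correct and follows essentially the same route as the paper: verify that the explicit formula defines an invariant probability (with the same $H_{i,j'}$ cancellation), invoke indecomposability for uniqueness, compute the $V$-marginal, and note $H > 0$ with $\alpha \ge 1$ for the Lipschitz extension. You usefully spell out the step the paper calls ``straightforward'' — that $|\supp(v)|\ge 3$ on all of $\Sigma$ forces $H\ge c>0$ by compactness, so the ratio is $C^1$ hence Lipschitz on the compact convex $\Sigma$ — but the underlying argument is the paper's.
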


\begin{proof}
For $v\in\Sigma\setminus\Sigma^3$ and $(i,j)\in\OEset$, set $\mu(i,j) = \frac{v_i^\alpha v_j^\alpha H_{i,j}(v)}{H(v)}$. Then $\mu\in\Delta_{\OEset}$ and is invariant for $P(v)$. Indeed,
\begin{align*}
\mu P(v)(i,j)&=\sum_{(i',j')\in\OEset} \mu(i',j')\,P(v)\big((i',j'),(i,j)\big)\\
&=\sum_{(i',j')\in\OEset} \frac{v_{i'}^\alpha v_{j'}^\alpha v_{j}^\alpha}{H(v)}\Ind_{j'=i}\Ind_{i'\neq j}= \frac{v_i^\alpha v_j^\alpha}{H(v)}\sum_{i'\notin\{i,j\}}v_{i'}^\alpha =\mu(i,j).
\end{align*}
The matrix $P(v)$ being indecomposable, we have $\mu=\pi(v)$.

Recall that $V((i,j),k)=\delta_j(k)$ for $(i,j)\in\OEset$ and ${k\in\Vset}$. Hence for all ${k\in\Vset}$, 
\begin{align*}
{\pi_k^V}(v)&= \sum_{(i,j)\in\OEset} \pi_{i,j}(v)\Ind_{j=k} = \sum_{i\neq k} \pi_{i,k}(v) \\
&= \sum_{i\neq k} \frac{v_i^\alpha v_k^\alpha H_{i,k}(v)}{H(v)}= \frac{v_k^\alpha H_k(v)}{H(v)}\;.
\end{align*}
Since $\alpha\ge 1$ and since $H(v)>0$, for all $v\in\Sigma$, it is straightforward to check that the map $\pi^V$ verifies Hypothesis~\ref{hyp:th}-\eqref{hyp:th:pi}.
\end{proof}

	\subsection{The pseudo-inverse of ${\bf I-P(v)}$}
	\label{sec:QV}

In this section we prove that Hypothesis~{\ref{hyp:th}-\eqref{hyp:th:Q}} holds.
Using Proposition~\ref{prop:K_ind_ex1}, we know that $P(v)$ is indecomposable for all $v\in \Sigma\setminus\Sigma^3$. 
Since $P$ is $C^1$ on $\Sigma$, using the implicit function theorem, 
one can prove (as in Lemma 5.1 in \cite{Benaim1997}) that, for $e\in\OEset$, $v\mapsto Q(v)V(e)$ is $C^1$ on $\Sigma\setminus\Sigma^3$. 
It remains to extend this mapping by continuity to $\Sigma^3$, which is the statement of the following proposition (by taking for all $i\in\Vset$, $g=V(\cdot,i):\OEset\to\Rset$ defined by ${g(e)=V(e,i)}$).

\begin{proposition}
\label{prop:Q(v)g CV}
Let $a:\Vset\to\Rset$ and $g:\OEset\to\Rset$ be the map defined by $g(i,j)=a(j)$ for all $(i,j)\in\OEset$. Then, the map $v\mapsto Q(v)g$ is continuously extendable to $\Sigma^3$.
\end{proposition}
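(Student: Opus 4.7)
The key input is a symmetry that the specific form $g(i,j)=a(j)$ satisfies with respect to the two recurrent classes of the limit matrix $P(v^*)$. Fix $v^*\in\Sigma^3$, uniform on some $\{i_1,i_2,i_3\}\subset\Vset$, so that (as noted in the remark following Proposition \ref{prop:K_ind_ex1}) the two recurrent classes of $P(v^*)$ are
\[
\mcr_1 = \{(i_1,i_2),(i_2,i_3),(i_3,i_1)\}, \qquad \mcr_2 = \{(i_2,i_1),(i_1,i_3),(i_3,i_2)\}.
\]
A direct computation yields
\[
\sum_{e\in\mcr_1} g(e) \;=\; a(i_1)+a(i_2)+a(i_3) \;=\; \sum_{e\in\mcr_2} g(e),
\]
i.e. $g$ has the same class-mean on $\mcr_1$ and $\mcr_2$. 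This is the crucial property that allows $Q(v)g$ to remain bounded even though the spectral gap of $P(v)$ closes as $v\to v^*$.

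With this symmetry in hand, the plan is to carry out a spectral perturbation analysis. At $v^*$, the matrix $P(v^*)$ has a double Perron eigenvalue $1$, whose right eigenspace is generated by the harmonic extensions of $\one_{\mcr_1}$ and $\one_{\mcr_2}$ to $\OEset$, while all remaining eigenvalues have modulus bounded away from $1$ uniformly in a neighborhood. For $v\in\mathring{\Sigma}$ close to $v^*$, this double eigenvalue splits into $1$ and a simple $\lambda(v)<1$ with $1-\lambda(v)=O(\epsilon)$, where $\epsilon := \sum_{k\notin\{i_1,i_2,i_3\}} v_k^\alpha$ controls (up to bounded factors) the escape rate from $\mcr_1\cup\mcr_2$. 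The left eigenvector $\psi(v)$ associated with $\lambda(v)$ converges, within the degenerate eigenspace, to the unique direction orthogonal to the Perron measure, namely $\psi_*\propto \one_{\mcr_1}-\one_{\mcr_2}$. The spectral decomposition then gives
\[
Q(v)\,g \;=\; \frac{\psi(v)\,g}{1-\lambda(v)}\;\phi(v) \;+\; R(v)\,g,
\]
with $\phi(v)$ the corresponding right eigenvector and $R(v)g$ a remainder whose continuous extendability to $v^*$ follows from analytic perturbation theory on the non-degenerate part of the spectrum.

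The whole question therefore reduces to controlling the singular prefactor $\psi(v)g/(1-\lambda(v))$. By the key observation $\psi_* g = 0$, and a first-order degenerate perturbation expansion yields $\psi(v)=\psi_*+O(\epsilon)$, hence $\psi(v)g = O(\epsilon)$, which exactly cancels the $1/(1-\lambda(v))$ blow-up. A slightly finer expansion identifies the limit of this ratio explicitly and shows that it is independent of the direction of approach of $v$ to $v^*$; combined with the continuity of $R(v)g$ this yields the continuous extension of $v\mapsto Q(v)g$ to $\Sigma^3$.

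The main obstacle is the degenerate spectral perturbation at $v^*$: since the double Perron eigenvalue splits when $v$ leaves $\Sigma^3$, the eigenvectors are not smooth functions of $v$ a priori, and one must choose the correct unperturbed eigenbasis—here the symmetric/antisymmetric pair $\one_{\mcr_1}\pm\one_{\mcr_2}$—and push the expansion far enough to verify both boundedness and direction-independence of the limit. An equivalent, perhaps more transparent probabilistic route would use the representation $Q(v)g(e) = \Esp_e\bigl[\sum_{n=0}^\infty (g(E_n)-\pi(v)g)\bigr]$ and decompose the trajectories into excursions between $\mcr_1$, $\mcr_2$ and the transient complement; the common class-mean of $g$ then prevents any accumulation of the sum during long sojourns in either recurrent class, and a dominated-convergence argument delivers the limit. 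Either route reduces, in the end, to exploiting the single algebraic identity $\sum_{\mcr_1}g = \sum_{\mcr_2}g$ displayed above.
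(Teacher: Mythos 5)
Your route is genuinely different from the paper's: the paper's proof is a direct computation that groups the coordinates of $Q(v)g$ into blocks ($X_1,X_2,Y_\ell,Z_\ell,T_\ell$), Taylor-expands the transition probabilities and $\pi(v)$ in the small parameters $\epsilon_i$, and solves the resulting linear system explicitly to exhibit the limit (\eqref{eq:Q(v)g CV}). Your spectral-perturbation plan sits conceptually above that: you isolate the one eigenvalue $\lambda(v)$ that approaches $1$, observe that only the corresponding rank-one piece of $Q(v)$ can blow up, and identify the algebraic identity $\sum_{\mcr_1} g = \sum_{\mcr_2} g$ as the structural reason the blow-up is cancelled. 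That identity is correct (both recurrent 3-cycles project onto the same second coordinates $\{i_1,i_2,i_3\}$ and $g(i,j)=a(j)$), and your argument that the limiting second left eigenvector must be $\psi_*\propto\one_{\mcr_1}-\one_{\mcr_2}$ (via $\psi(v)\one_{\OEset}=0$ and the symmetry of the limiting Perron measure) is also sound. This is a nice way to see \emph{why} the statement is true, and why it would fail for a generic $g$ on $\OEset$ not of the form $g(i,j)=a(j)$.

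The gap is that you have only argued boundedness, not convergence, and the step you elide is precisely where all the work in the paper lives. From $\psi(v)=\psi_*+O(\epsilon)$ and $\psi_* g=0$ you get $\psi(v)g=O(\epsilon)$, and $1-\lambda(v)=O(\epsilon)$, but the quotient $\psi(v)g/(1-\lambda(v))$ is a ratio of two quantities that each vanish linearly along a \emph{multidimensional} set of directions of approach (the perturbation is parametrized by $(\epsilon_1,\dots,\epsilon_N)$, or equivalently by the $v_\ell$ for $\ell\geq 4$ together with the signed deviations of $v_1,v_2,v_3$ from $1/3$). Nothing in a first-order degenerate perturbation expansion forces the two leading coefficients to be proportional with a \emph{universal} constant: one must actually compute the first-order corrections to $\psi$ and $\lambda$ in each direction and check that the ratio is the same. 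You acknowledge this ("a slightly finer expansion identifies the limit\dots and shows that it is independent of the direction of approach"), but you do not do it, and this is exactly the content of the paper's derivation of \eqref{eq:barX1+barX2=O} and the subsequent cancellation of all $O(\epsilon^\alpha)$ and $O(\epsilon)$ terms in \eqref{eq:X1=O}--\eqref{eq:Yl=O}. A second, more minor point: the clean rank-one spectral decomposition you write presupposes diagonalizability (or at least a simple $\lambda(v)$ isolated from the remaining spectrum near $v^*$); since at $v^*$ there are also eigenvalues $\omega,\omega^2$ of modulus $1$, one has to argue that $1$ and $\lambda(v)$ are separated from these by a contour, which is true but should be said. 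The excursion-decomposition variant you sketch at the end has the same gap in a different guise: boundedness of partial sums during sojourns in a cycle follows from the class-mean identity, but dominated convergence gives you a limit only after you control the dependence on the direction of approach of the excursion statistics, which is again the missing quantitative step.
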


\begin{proof}
Since $\Sigma^3$ is a finite set, it suffices to prove that, for all $v^0\in\Sigma^3$,  the limit of $Q(v)g$ as $v\in\Sigma\setminus\Sigma^3$ goes to $v^0$ exists, and 
by symmetry, to prove this only for $v^0=(1/3,1/3,1/3,0,\cdots,0)$, the uniform probability measure on $\{1,2,3\}$.

By abuse of notation, the transpose $W^T$ of a vector $W$ will be denoted by $W$. We use the following vectorial notations for a function $f:\OEset\to\Rset$
\begin{equation}
\label{eq:notation_vect}
\left\{
\begin{array}{ll}
X_1^f=\big(f(3,1),f(1,2),f(2,3)\big)\;, \qquad &X_2^f=\big(f(2,1),f(3,2),f(1,3)\big)\;,\\
Y_\ell^f=\big(f(1,\ell),f(2,\ell),f(3,\ell)\big)\;,\qquad &Z_\ell^f=\big(f(\ell,1),f(\ell,2),f(\ell,3)\big)\;,\\
T_\ell^f=\big(f(4,\ell),\cdots,f(N,\ell)\big)\;, \qquad &U_\ell^f=\big(f(\ell,4),\cdots,f(\ell,N)\big)\;,
\end{array}\right.
\end{equation}
for $\ell\in\{4,\dots,N\}$ and with the convention $f(\ell,\ell)=0$. 
The vectors $X_1^f$ and $X_2^f$ give $f$ for the edges starting from $\{1,2,3\}$ and ending in $\{1,2,3\}$, 
$Y_\ell^f$ gives $f$ for the edges starting from $\{1,2,3\}$ and ending to $\ell$, 
$Z_\ell^f$ gives $f$ for the edges starting from $\ell$ and ending in $\{1,2,3\}$, 
$T_\ell^f$ gives $f$ for the edges starting from $\{4,\dots,N\}$ and ending to $\ell$ 
and $U_\ell^f$ gives $f$ for the edges starting from $\ell$ and ending in $\{4,\dots,N\}$. 
Note that when ${N=4}$, $T_\ell^f=U_\ell^f=0$. Vectors $X_1^f$, $X_2^f$, $(Y^f_\ell)_ {\ell\ge 4}$, $(Z^f_\ell)_ {\ell\ge 4}$ and $(T^f_\ell)_ {\ell\ge 4}$ are enough to describe $f$, but vectors $(U_\ell^f)_{\ell\ge 4}$ will be useful in the following.

A constant vector $(\lambda,\cdots,\lambda)$ will simply be denoted by $\lambda$. 
For $\ell\ge 4$,  $\delta_\ell$ denotes the vector $(\delta_\ell(4),\cdots,\delta_\ell(N))$, where  $\delta_\ell(\ell)=1$ and $\delta_\ell(m)=0$ if $m\ne \ell$.  Set ${h=(a(1),a(2),a(3))}$. Then, for $\ell\in\{4,\dots,N\}$, we have
\begin{eqnarray}
\label{eq:g_vect}
\left\{
\begin{split}
X^g_1&=X^g_2=Z^g_\ell=h\;,\\
Y_\ell^g&=a(\ell)(1,1,1)=a(\ell)\;,\\
T_\ell^g&=a(\ell)(1-\delta_\ell)\;.
\end{split}\right.
\end{eqnarray}

Set $J=\begin{psmallmatrix} 0&1&0\\0&0&1\\1&0&0 \end{psmallmatrix}$. Then  $J^2=\begin{psmallmatrix} 0&0&1\\1&0&0\\0&1&0 \end{psmallmatrix}$ and $J^3=I$.
Set $L_1=\frac13(J+2J^2)$ and $L_2=\frac13(2J+J^2)$.
For $x\in\Rset^3$, set $\overline{x}=\frac{x_1+x_2+x_3}{3}$.  Proposition \ref{prop:Q(v)g CV} is proved as soon as for all $\ell\in\{4,\dots,N\}$
\begin{eqnarray}
\label{eq:Q(v)g CV}
\left\{
\begin{split}
\lim_{v\to v^0,\,v\in\Sigma\setminus\Sigma^3}X_q^{Q(v)g}&\;=\;-L_qh+\overline{h}\;, \qquad q\in\{1,2\}\\
\lim_{v\to v^0,\,v\in\Sigma\setminus\Sigma^3}Y_\ell^{Q(v)g}&\;=\;-\frac{h}{4}+a(\ell)-\frac{3\overline{h}}{4}\;,\\
\lim_{v\to v^0,\,v\in\Sigma\setminus\Sigma^3}Z_\ell^{Q(v)g}&\;=\;\frac{h-\overline{h}}{2}\;,\\
\lim_{v\to v^0,\,v\in\Sigma\setminus\Sigma^3}T_\ell^{Q(v)g}&\;=\;(a(\ell)-\overline{h})(1-\delta_\ell)\;.
\end{split}\right.
\end{eqnarray}

We now prove \eqref{eq:Q(v)g CV}.
Set $\epsilon_i=1-3v_i$ for $i\in\{1,2,3\}$, $\epsilon_\ell=3v_\ell$ for $\ell\ge4$ and $\epsilon=\sum_{i=1}^3 \epsilon_i\left(=\sum_{\ell\ge 4} \epsilon_\ell\right)$.
Remark that $\epsilon_i=O(\epsilon)$ for all $i\in\Vset$. Indeed, $v_\ell\ge 0$ implies $0\le \epsilon_\ell\le\epsilon$  for $\ell\ge 4$. 
Moreover, as $\epsilon$ goes to $0$, $v_i$ is close to $1/3$ for $i\in\{1,2,3\}$ and $v_\ell$ is close to $0$ for ${\ell\ge 4}$, thus $\max(v)=\frac{1}{3}(1-\min_{i\in\{1,2,3\}} {\epsilon_i})$ and $\min(v)=\frac{1}{3}\min_{\ell\ge 4}\epsilon_\ell$. 
Therefore since $v\in\Sigma$, for all small enough $\epsilon$,
$\min_{i\in\{1,2,3\}}{\epsilon_i}+\min_{\ell\ge 4}{\epsilon_\ell}\ge 0$. Since $0\le\epsilon_\ell\le\epsilon$ for $\ell\ge 4$, this means that $\min_{i\in\{1,2,3\}}\epsilon_i\ge -\epsilon$. Since $\epsilon=\sum_{i=1}^3 \epsilon_i$, we have 
$$\epsilon\ge \max_{i\in\{1,2,3\}} \epsilon_i+2\min_{i\in\{1,2,3\}} \epsilon_i\ge \max_{i\in\{1,2,3\}}\epsilon_i-2\epsilon\;.$$
Thus $3\epsilon\ge\max_{i\in\{1,2,3\}} \epsilon_i$ and $\epsilon_i=O(\epsilon)$ for $i\in\{1,2,3\}$.

To lighten the notation, set $X_1=X_1^{Q(v)g}$, $X_2=X_2^{Q(v)g}$, $Y_\ell=Y_\ell^{Q(v)g}$, $Z_\ell=Z_\ell^{Q(v)g}$, $T_\ell=T_\ell^{Q(v)g}$ and $U_\ell=U_\ell^{Q(v)g}$. Recall that $Q(v)g$ is defined by
\begin{equation}
\label{eq:frrw:def Q(v)g}
\left\{ \begin{array}{ll} (I-P(v))Q(v)g=(I-\Pi(v)) g\;,\\ 
\pi(v) Q(v)g=0\;.\end{array}\right.
\end{equation}
To prove \eqref{eq:Q(v)g CV}, we will give an estimate of $Q(v)g$ as $v$ goes to $v^0$ (or equivalently as ${\epsilon\to 0}$). More precisely we will give estimates of $X_1$, $X_2$, $Y_\ell$, $Z_\ell$ and $T_\ell$ as $\epsilon\to 0$.

For all $i,j,k\in\Vset$, such that $|\{i,j,k\}|=3$, denote $p_{i,j,k}=P(v)((i,j),(j,k))$. Remark that $p_{i,j,k}=p_{j,i,k}$.
When $\{i,j,k\}=\{1,2,3\}$, then $p_{i,j,k}$ only depends on $k$. We denote this probability $p_k$. Since $(1-\epsilon_k)^{-\alpha}=1+O(\epsilon)$ as $\epsilon$ goes to $0$, we have
\begin{eqnarray*}
p_{k}&=&\frac{(1-\epsilon_k)^\alpha}{(1-\epsilon_k)^\alpha+\sum_{\ell\ge 4} \epsilon_\ell^\alpha}=\left(1+\frac{\sum_{\ell\ge 4}\epsilon_\ell^\alpha}{(1-\epsilon_k)^\alpha}\right)^{-1}
\;=\;\left(1+(1+O(\epsilon))\sum_{\ell\ge 4} \epsilon_{\ell}^\alpha\right)^{-1}.
\end{eqnarray*}
This implies the Taylor expansion of $p_k$ as $\epsilon$ goes to $0$\,:
\begin{eqnarray*}
p_{k}&=&1-\sum_{\ell\ge 4} \epsilon_{\ell}^\alpha+O(\epsilon^{\alpha+1}).
\end{eqnarray*}

We also have the following Taylor expansions as $\epsilon$ goes to $0$
\begin{eqnarray*}
p_{i,j,\ell}=p_{j,i,\ell}&=&\frac{\epsilon_{\ell}^\alpha}{(1-\epsilon_k)^\alpha+\sum_{l\ge4} \epsilon_{\ell}^\alpha}\;=\;\epsilon_\ell^\alpha+O(\epsilon^{\alpha+1})\;,\\
p_{i,\ell,j}=p_{\ell,i,j}&=&\frac{(1-\epsilon_j)^\alpha}{\displaystyle (1-\epsilon_j)^\alpha+(1-\epsilon_k)^\alpha +\sum_{\ell'\ge4,\ell'\neq \ell} \epsilon_{\ell'}^\alpha} \;=\; \frac{1}{2}+O(\epsilon)\;,\\
p_{i,\ell,m}=p_{\ell,i,m} &=& \frac{\epsilon_m^\alpha}{\displaystyle (1-\epsilon_j)^\alpha + (1-\epsilon_k)^\alpha + \sum_{\ell'\ge4,\ell'\neq \ell} \epsilon_{\ell'}^\alpha}\;=\;\frac{\epsilon_m^\alpha}{2}+O(\epsilon^{\alpha+1})\;,\\
p_{\ell,m,i}=p_{m,\ell,i} &=& \frac{(1-\epsilon_i)^\alpha}{\displaystyle\sum_{i'=1}^3 (1-\epsilon_{i'})^\alpha + \sum_{\ell'\ge4,\,\ell'\notin\{\ell,m\}} \epsilon_{\ell'}^\alpha}\;=\;\frac{1}{3}+O(\epsilon)\;,\\
p_{\ell,m,n}=p_{m,\ell,n}&=&\frac{\epsilon_n^\alpha}{\displaystyle\sum_{i'=1}^3(1-\epsilon_{i'})^\alpha +\sum_{\ell'\ge4,\,\ell'\notin\{\ell,m\}} \epsilon_{\ell'}^\alpha}\;=\;\frac{\epsilon_n^\alpha}{3}+O(\epsilon^{\alpha+1})\;,
\end{eqnarray*}
for $\{i,j,k\}=\{1,2,3\}$ and $\ell$, $m$, $n\ge 4$, with $|\{\ell,m,n\}|=3$.

Let $L_0=\frac13 (I+J+J^2)$ and let $A_1$ and $A_2$ be the matrices
\begin{align*}
A_1=\begin{psmallmatrix}
0&p_{2}&0\\
0&0&p_{3}\\
p_{1}&0&0
\end{psmallmatrix}\;\mbox{ and }&\;\;\;
A_2=\begin{psmallmatrix}
0&0&p_{3}\\
p_{1}&0&0\\
0&p_{2}&0
\end{psmallmatrix}\;.
\end{align*}
Remark that $L_0x=\overline{x}$ for $x\in\Rset^3$.
The following lemma gives Taylor expansions for ${(I-A_1)^{-1}}$ and for ${(I-A_2)^{-1}}$.
\begin{lemma}
\label{lem:(I-A)^-1}
If $p_{1}p_{2}p_{3}\ne 1$, then $I-A_1$ and $I-A_2$ are invertible. Moreover, for $q\in\{1,2\}$
\begin{eqnarray}
\label{taylor:A1}\Big(\sum_{\ell\ge4}\epsilon_\ell^\alpha\Big) (I-A_q)^{-1} &=& (1+O(\epsilon)) L_0-\Big(\sum_{\ell\ge4}\epsilon_\ell^\alpha\Big) L_q+O(\epsilon^{\alpha+1})\;.
\end{eqnarray} 
\end{lemma}

\begin{proof}
Since the determinants of $I-A_1$ and of $I-A_2$ are both equal to $1-p_{1}p_{2}p_{3}$, $I-A_1$ and $I-A_2$ are both invertible when $p_{1}p_{2}p_{3}\neq 1$. When it is the case, we have
\begin{eqnarray*}
(I-A_1)^{-1} &=& (1-p_{1}p_{2}p_{3})^{-1} 
 \begin{psmallmatrix}1 & p_{2} & p_{2}p_{3}\\
p_{1}p_{3}&1& p_{3}\\
p_{1}&p_{1}p_{2}&1\end{psmallmatrix}\\
&=&(1-p_{1}p_{2}p_{3})^{-1} \left( 3L_0 -3\Big(\sum_{\ell\ge4}\epsilon_\ell^\alpha\Big) L_1+O(\epsilon^{\alpha+1})\right)\;.
\end{eqnarray*}
Since  $p_{1}p_{2}p_{3}  = 1-3\left(\sum_{\ell\ge4}\epsilon_\ell^\alpha\right) + O(\epsilon^{\alpha+1}))$, we have
\begin{eqnarray*}
(1-p_1p_2p_3)^{-1} &=& \frac{1}{3} \Big(\sum_{\ell\ge4}\epsilon_\ell^\alpha\Big)^{-1}  \big[1+O(\epsilon)\big]\;.
\end{eqnarray*}
This implies \eqref{taylor:A1} when $q=1$. We prove \eqref{taylor:A1} when $q=2$ by the same way.
\end{proof}

The following lemma gives the Taylor expansion of $\pi(v)$ as $\epsilon$ goes to $0$.
\begin{lemma}\label{lem:pi=O} For $i\ne j \in \{1,2,3\}$, and $\ell\ne m\in\{4\dots,N\}$,
\begin{eqnarray*}
\pi_{i,j}(v)&=&\frac{1}{6}-\frac{1}{3}\sum_{\ell\ge 4}\epsilon_\ell^\alpha+O(\epsilon^{\alpha+1})\;,\\
\pi_{i,\ell}(v)=\pi_{\ell,i}(v)&=&\frac{\epsilon_\ell^\alpha}{3}+O(\epsilon^{\alpha+1})\;,\\
\pi_{\ell,m}(v)&=&O(\epsilon^{\alpha+1})\;.
\end{eqnarray*}
\end{lemma}

\begin{proof}
Recall that for $i\neq j$, 
$\pi_{i,j}(v)=\dfrac{v_i^\alpha v_j^\alpha \sum_{k\notin\{i,j\}}v_k^\alpha}{H(v)}$,
where
$$H(v)=6\sum_{i<j<k}v_i^\alpha v_j^\alpha v_k^\alpha=\frac{6}{3^{3\alpha}}\left(\prod_{i=1}^3(1-\epsilon_i)^\alpha+O(\epsilon^{\alpha})\right)\;
=\;\frac{6}{3^{3\alpha}}+O(\epsilon).$$
Thus for $i,j, k$ such that $\{i,j,k\}=\{1,2,3\}$
\begin{eqnarray*}
\pi_{i,j}(v)&=&\frac{(1-\epsilon_i)^\alpha(1-\epsilon_j)^\alpha \big((1-\epsilon_k)^\alpha+\sum_{\ell\ge4}\epsilon_\ell^\alpha \big)}{
6 \big(\prod_{i'=1}^3(1-\epsilon_{i'})^\alpha \big) \left(1+3\sum_{\ell\ge4}\epsilon_\ell^\alpha +O(\epsilon^{\alpha+1})\right)}\\
&=&\frac{1}{6}\left(1+\frac{\sum_{\ell\ge 4}\epsilon_\ell^\alpha}{(1-\epsilon_k)^\alpha}\right)
\times\frac{1}{1+3\sum_{\ell\ge 4}\epsilon_\ell^\alpha+O(\epsilon^{\alpha+1})}\\
&=&\frac{1}{6}\frac{1+\sum_{\ell\ge4}\epsilon_\ell^\alpha+O(\epsilon^{\alpha+1})}{1+3\sum_{\ell\ge4}\epsilon_\ell^\alpha +O(\epsilon^{\alpha+1})}\\
&=&\frac{1}{6}-\frac{1}{3}\sum_{\ell\ge4}\epsilon_\ell^\alpha+O(\epsilon^{\alpha+1})\;.
\end{eqnarray*}
We also have for $i,j, k$ such that $\{i,j,k\}=\{1,2,3\}$ and $\ell,m\ge 4$, with $\ell\neq m$
\begin{eqnarray*}
\pi_{i,\ell}(v)=\pi_{\ell,i}(v)&=&\frac{(1-\epsilon_i)^\alpha \epsilon_\ell^\alpha \left((1-\epsilon_j)^\alpha+(1-\epsilon_k)^\alpha+\sum_{\ell'\ge 4,\ell'\neq \ell}\epsilon_{\ell'}^\alpha\right)}{ 6\left(\prod_{i'=1}^3(1-\epsilon_{i'})^\alpha \right) \left(1+3\sum_{\ell'\ge4}\epsilon_{\ell'}^\alpha +O(\epsilon^{\alpha+1})\right)}\\
&=&\frac{\epsilon_\ell^\alpha}{3}+O(\epsilon^{\alpha+1})
\end{eqnarray*}
and
\begin{eqnarray*}
\pi_{\ell,m}(v)&=&\frac{\displaystyle\epsilon_\ell^\alpha\epsilon_m^\alpha \left(\sum_{i'=1}^3 (1-\epsilon_{i'})^\alpha+\sum_{\ell'\ge4,\,\ell'\notin\{\ell,m\}}\epsilon_{\ell'}^\alpha\right)}{6\big(\prod_{i'=1}^3(1-\epsilon_{i'}^\alpha) \big) \left(1+3\sum_{\ell'\ge4}\epsilon_{\ell'}^\alpha +O(\epsilon^{\alpha+1})\right)}\;=\;O(\epsilon^{2\alpha}) \;=\; O(\epsilon^{\alpha+1})\;.
\end{eqnarray*}
\end{proof}
The previous lemma permits to give a Taylor expansion for $\pi(v)g$\,:
\begin{eqnarray*}
\pi(v)g&=&\sum_{(i,j)\in\OEset} \pi_{i,j}(v)g(i,j)\\
&=&\sum_{i,j=1\atop i\neq j}^3 \left(\frac{1}{6}-\frac{1}{3}\sum_{\ell\ge 4}\epsilon_\ell^\alpha\right)a_j+\sum_{i=1}^3\sum_{\ell\ge 4} \frac{\epsilon_\ell^\alpha}{3}\big(a_i+a_\ell\big)+O(\epsilon^{\alpha+1})\\
&=& \left(1+O(\epsilon)\right)\overline{h} + \sum_{\ell\ge4}\epsilon_\ell^\alpha a_\ell + O(\epsilon^{\alpha+1})\;.
\end{eqnarray*}

Let us first prove \eqref{eq:Q(v)g CV} in the case $\overline{h}=0$. Denoting $\langle \epsilon^\alpha,a\rangle= \sum_{\ell\ge4}\epsilon^\alpha_\ell a_\ell$, we have
\begin{eqnarray*}
\pi(v)g&=&\langle\epsilon^\alpha,a\rangle+O(\epsilon^{\alpha+1})\;.
\end{eqnarray*}
Let us now express $P(v)Q(v)g$ in function of $Q(v)g$ and using Notations \eqref{eq:notation_vect} and the equation
$$P(v)Q(v)g(i,j)=\sum_{k\notin\{i,j\}} p_{i,j,k} Q(v)g(j,k)\;, \quad \hbox{for $i\ne j\in\Vset$.} $$
Let $i,j,k$ be such that $\{i,j,k\}=\{1,2,3\}$ and $\ell,m\ge 4$, with $\ell\neq m$. Since
\begin{eqnarray*}
P(v)Q(v)g(i,j)&=&p_{k}Q(v)g(i,j) + \sum_{\ell'\ge4} p_{i,j,\ell'}Q(v)g(j,\ell')\;,
\end{eqnarray*}
using the Taylor expansion of $p_k$ and $\big(p_{i,j,\ell'}\big)_ {\ell'\ge 4}$, we have
\begin{eqnarray*}
X_1^{P(v)Q(v)g}&=&A_1X_1+\sum_{\ell'\ge4} \big(\epsilon_{\ell'}^\alpha+O(\epsilon^{\alpha+1})\big) Y_{\ell'}\;,\\
X_2^{P(v)Q(v)g}&=&A_2X_2+\sum_{\ell'\ge4} \big(\epsilon_{\ell'}^\alpha+O(\epsilon^{\alpha+1})\big) Y_{\ell'}\;.
\end{eqnarray*}
Since
\begin{eqnarray*}
P(v)Q(v)g(i,\ell) &=& p_{i,\ell,j} Q(v) g(\ell,j) + p_{i,\ell,k} Q(v) g(\ell,k) + \sum_{m'\ge4,\, m'\neq \ell} p_{i,\ell,m'} Q(v)g(\ell,m')\;,
\end{eqnarray*}
using the Taylor expansion of $p_{i,\ell,j}$, $p_{i,\ell,k}$ and $\big(p_{i,\ell,m'}\big)_{m'\ge 4,m'\neq \ell}$, we have
\begin{eqnarray*}
Y_\ell^{P(v)Q(v)g} &=& \left(1+O(\epsilon)\right)\left(\frac{J+J^2}{2}\right)Z_\ell + O(\epsilon^\alpha)U_\ell\;.
\end{eqnarray*}
Since 
\begin{eqnarray*}
P(v)Q(v)g(\ell,i) &=& p_{\ell,i,j}Q(v)g(i,j) + p_{\ell,i,k}Q(v)g(i,k) + \sum_{m'\ge4,\, m'\neq \ell} p_{\ell,i,m'}Q(v)g(i,m')\;,
\end{eqnarray*}
using the Taylor expansion of $p_{\ell,i,j}$, $p_{\ell,i,k}$ and $\big(p_{\ell,i,m'}\big)_{m'\ge 4,m'\neq \ell}$, we have
\begin{eqnarray*}
Z_\ell^{P(v)Q(v)g} &=& \big(1+O(\epsilon)) \left(\frac{JX_1+J^2X_2}{2}\right) + O(\epsilon^\alpha)\sum_{m'\ge4,\, m'\neq \ell} Y_{m'}\;.
\end{eqnarray*}
Since
\begin{eqnarray*}
P(v)Q(v)g(m,\ell)&=&p_{m,\ell,i}Q(v)g(\ell,i)+p_{m,\ell,j}Q(v)g(\ell,j) \\
& &+\;p_{m,\ell,k}Q(v)g(\ell,k)+\sum_{n\ge4,\,n\notin\{\ell,m\}}p_{m,\ell,n}Q(v)g(\ell,n)\;,
\end{eqnarray*}
using the Taylor expansion of $p_{m,\ell,n}$ for $n\notin\{\ell,m\}$, we have
\begin{eqnarray*}
T_\ell^{P(v)Q(v)g} &=& \big(1+O(\epsilon)\big)\overline{Z_\ell} + O(\epsilon^\alpha)U_\ell\;.
\end{eqnarray*}

Using the previous expansions and the expression of $g$ given by \eqref{eq:g_vect}, the system \eqref{eq:frrw:def Q(v)g} implies that
\begin{align*}
&(I-A_q)X_q \;=\; h-\langle\epsilon^\alpha,a\rangle + \sum_{\ell'\ge4} \epsilon_\ell^\alpha Y_{\ell'} + O(\epsilon^{\alpha+1})\;, \qquad q\in\{1,2\},\\
&Y_\ell \;=\; a_\ell-\langle\epsilon^\alpha,a\rangle +(1+O(\epsilon))\left(\frac{J+J^2}{2}\right)Z_\ell +O(\epsilon^\alpha)U_\ell+O(\epsilon^{\alpha+1})\;,\\
&Z_\ell \;=\; h-\langle\epsilon^\alpha,a\rangle +(1+O(\epsilon))\left(\frac{JX_1+J^2X_2}{2}\right) +O(\epsilon^\alpha)\sum_{m\ge4,m\neq \ell} Y_m+O(\epsilon^{\alpha+1})\;,\\
&T_\ell \;=\; a_\ell\one_\ell-\langle\epsilon^\alpha,a\rangle +(1+O(\epsilon))\overline{Z_\ell} +O(\epsilon^\alpha) U_\ell +O(\epsilon^{\alpha+1})\;,
\end{align*}
for $\ell\ge4$.
Recall the definition of $\norm{\cdot}$ given by \eqref{def:norm} and set $\norm{X}=\sup\{\norm{X_1},\norm{X_2}\}$.
We then have for all $\ell\ge 4$,
\begin{eqnarray*}
Z_\ell&=&h+\frac{JX_1+J^2X_2}{2} +O(\epsilon(1+\norm{X}+\norm{Y}))\;.
\end{eqnarray*}
Remarking that $J\one=J^2\one=\one$, we have
\begin{eqnarray}
\label{eq:Tl=O}
T_\ell&=&a_\ell(1-\delta_\ell)+\frac{\overline{X_1}+\overline{X_2}}{2}+O(\epsilon(1+\norm{X}+\norm{Y}+\norm{U}))\;.
\end{eqnarray}
Remarking that $(J+J^2)h=-h$, we also have
\begin{eqnarray*}
Y_\ell&=&-\frac{h}{2}+a_\ell+\left(\frac{I+J^2}{4}\right)X_1+\left(\frac{I+J}{4}\right)X_2+O(\epsilon(1+\norm{X}+\norm{Y}+\norm{U}))\;.
\end{eqnarray*}
Using Lemma \ref{lem:(I-A)^-1}, remarking that $L_0J=L_0J^2=L_0$, that $L_0b=\overline{b}$ for $b\in\Rset^3$ and recalling that $\overline{h}=0$, we have
\begin{eqnarray}
\nonumber X_1&=&(I-A_1)^{-1} \left[h+\left(\sum_{\ell\ge4}\epsilon_\ell^\alpha\right) \left(\frac{I+J^2}{4}X_1 +\frac{I+J}{4}X_2-\frac{h}{2}\right)\right.\\
\nonumber & &\left.\phantom{\sum_{\ell\ge4}} \qquad\qquad+O\big(\epsilon(1+\norm{X}+\norm{Y}+\norm{U})\big)\right]\\
\label{eq:X1=O}&=&-L_1h+\frac{\overline{X_1}+\overline{X_2}}{2}+O\big(\epsilon(1+\norm{X}+\norm{Y}+\norm{U})\big)\;.
\end{eqnarray}
and likewise
\begin{eqnarray}
\label{eq:X2=O}
X_2&=&-L_2h+\frac{\overline{X_1}+\overline{X_2}}{2}+O\big(\epsilon(1+\norm{X}+\norm{Y}+\norm{U})\big)\;.
\end{eqnarray}
Remarking that $JL_1+J^2L_2=I+L_0$, \eqref{eq:X1=O} and \eqref{eq:X2=O} implies
\begin{eqnarray}
\label{eq:Zl=O}
Z_\ell&=&\frac{h}{2}+\frac{\overline{X_1}+\overline{X_2}}{2}+O(\epsilon(1+\norm{X}+\norm{Y}+\norm{U}))\;.
\end{eqnarray}
Remarking that
\begin{eqnarray*}
(I+J^2)L_1+(I+J)L_2&=&\frac{5}{2}L_0-\frac{I}{2}
\end{eqnarray*}
and using $L_0h=\overline{h}=0$, \eqref{eq:X1=O} and \eqref{eq:X2=O} implies
\begin{eqnarray}
\label{eq:Yl=O}
Y_\ell&=&-\frac{h}{4}+a_\ell+\frac{\overline{X_1}+\overline{X_2}}{2}+O(\epsilon(1+\norm{X}+\norm{Y}+\norm{U}))\;.
\end{eqnarray}
An immediate consequence of \eqref{eq:Tl=O}, \eqref{eq:Zl=O} and \eqref{eq:Yl=O} is that
\begin{eqnarray*}
\norm{Y}&=&O\big(1+\norm{X}\big)\;,\\
\norm{Z}&=&O\big(1+\norm{X}\big)\;,\\
\norm{T}&=&\norm{U}=O\big(1+\norm{X}\big)\;.
\end{eqnarray*}
Since $\pi(v)f=0$, Lemma \ref{lem:pi=O} implies that
\begin{eqnarray*}
\frac{\overline{X_1}+\overline{X_2}}{2}\left(1-2\sum_{\ell'\ge4}\epsilon_{\ell'}^\alpha\right) +\sum_{\ell'\ge4}\epsilon_{\ell'}^\alpha \big(\overline{Y_{\ell'}}+\overline{Z_{\ell'}}\big) +O\big(\epsilon^{\alpha+1}(1+\norm{X})\big)\;.
\end{eqnarray*}
Thus
\begin{eqnarray}
\label{eq:barX1+barX2=O}
\frac{\overline{X_1}+\overline{X_2}}{2}&=&O\big(\epsilon^\alpha(1+\norm{X})\big)\;.
\end{eqnarray}
Using \eqref{eq:X1=O},\eqref{eq:X2=O} and \eqref{eq:barX1+barX2=O}, we get $\norm{X}=O(1)$ and thus that for $ q\in\{1,2\}$ and $\ell\ge 4$,
\begin{eqnarray*}
X_q&=&-L_qh+O(\epsilon)\;, \\
Y_\ell&=&-\frac{h}{4}+a(\ell)+O(\epsilon)\;,\\
Z_\ell&=&\frac{h}{2}+O(\epsilon)\;,\\
T_\ell&=&a(\ell)(1-\delta_\ell)+O(\epsilon)\;.
\end{eqnarray*}

Suppose now that $\overline{h}\neq0$. Set $g_0=g-\overline{h}$. Then since $Q(v)\one=0$, $Q(v)g=Q(v)g_0$ and $\lim_{v\to v^0}Q(v)g=\lim_{v\to v^0}Q(v)g_0$. Note that
\begin{eqnarray}
\left\{
\begin{split}
X^{g_0}_1&=X^{g_0}_2=Z^{g_0}_\ell=h-\overline{h}\;,\\
Y_\ell^{g_0}&=a(\ell)-\overline{h}\;,\\
T_\ell^{g_0}&=(a(\ell)-\overline{h})(1-\delta_\ell)\;,
\end{split}\right.
\end{eqnarray}
for ${\ell\in\{4,\dots,N\}}$. Thus \eqref{eq:Q(v)g CV} holds.
\end{proof}

Since Hypotheses \ref{hyp:K} and \ref{hyp:th} hold, the vector field $F:T_1\Delta_{\Vset}\to T_0\Delta_{\Vset}$, defined by \eqref{def:F} induces a flow $\Phi$ for the differential equation $\dot{v}=F(v)$. Moreover Theorem~\ref{th:The_theorem} holds and the limit set of $(v_n)$ is attractor free for $\Phi$.

	\subsection{A strict Lyapunov function}
	\label{sec:lyap}

\begin{proposition}
\label{prop:lyapunov}
The map $H:\Sigma \to \mathbb{R}_+^*$, defined by \eqref{def:ffrw:H} is a strict Lyapunov function for $\Phi$.
\end{proposition}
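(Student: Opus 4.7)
The plan is to prove strict positivity of $\langle \nabla H(v),F(v)\rangle$ on $\Sigma\setminus\Lambda$ by identifying the expression as the deficit in a Cauchy--Schwarz inequality, whose equality case is exactly the equilibrium condition $\pi^V(v)=v$.

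\smallskip
\textbf{Step 1: compute $\nabla H$.} Since $H(v)=\sum_{i,j,k;\, i\neq j\neq k\neq i} v_i^\alpha v_j^\alpha v_k^\alpha$ is symmetric in its three summation variables, each index $i_0\in\Vset$ contributes through the three positions equally, so
\[
\partial_{i_0} H(v)=3\alpha\,v_{i_0}^{\alpha-1}\sum_{j,k;\, i_0\neq j\neq k\neq i_0} v_j^\alpha v_k^\alpha=3\alpha\,v_{i_0}^{\alpha-1}H_{i_0}(v).
\]
This is valid throughout $\Sigma$ since $H$ is polynomial.

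\smallskip
\textbf{Step 2: algebraic identity for the inner product on $\Sigma\setminus\Sigma^3$.} For such $v$, $\mu(v)=v$ and $\pi^V_i(v)=v_i^\alpha H_i(v)/H(v)$, so $F_i(v)=-v_i+v_i^\alpha H_i(v)/H(v)$. Plugging into $\langle\nabla H(v),F(v)\rangle$ and using $\sum_i v_i^\alpha H_i(v)=H(v)$, I obtain
\[
\langle\nabla H(v),F(v)\rangle=\frac{3\alpha}{H(v)}\left[\sum_{i\in\Vset} v_i^{2\alpha-1}H_i(v)^2-H(v)^2\right].
\]
Note $H(v)>0$ on $\Sigma$ since every $v\in\Sigma$ has support of size at least three (a consequence of $\max(v)\le 1/3+\min(v)$).

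\smallskip
\textbf{Step 3: Cauchy--Schwarz.} Apply the inequality $(\sum_i a_ib_i)^2\le(\sum_i a_i^2)(\sum_i b_i^2)$ with $a_i=v_i^{(2\alpha-1)/2}H_i(v)$ and $b_i=v_i^{1/2}$, using $\sum_i b_i^2=\sum_i v_i=1$:
\[
H(v)^2=\left(\sum_{i\in\Vset} v_i^\alpha H_i(v)\right)^{\!2}\le \sum_{i\in\Vset} v_i^{2\alpha-1}H_i(v)^2.
\]
Consequently $\langle\nabla H(v),F(v)\rangle\ge 0$ on $\Sigma\setminus\Sigma^3$.

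\smallskip
\textbf{Step 4: equality case and $\Sigma^3$.} Equality in Cauchy--Schwarz holds iff the ratio $a_i/b_i=v_i^{\alpha-1}H_i(v)$ is constant on $\supp(v)$, say equal to some $c$. Then on $\supp(v)$, $\pi^V_i(v)=v_i\cdot v_i^{\alpha-1}H_i(v)/H(v)=(c/H(v))v_i$, and $\pi^V_i(v)=0=v_i$ off $\supp(v)$; summing over $i$ forces $c=H(v)$ and thus $\pi^V(v)=v$, i.e.\ $v\in\Lambda$. Conversely any equilibrium in $\Sigma\setminus\Sigma^3$ satisfies $v_i^{\alpha-1}H_i(v)=H(v)$ on $\supp(v)$ and hence achieves equality. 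It remains to handle $v\in\Sigma^3$: such a $v$ is uniform on a three-point set, and a direct substitution into the formula $v_k^\alpha H_k(v)/H(v)$ (which is the continuous extension of $\pi^V$ by the previous section) gives $\pi^V(v)=v$, so $\Sigma^3\subset\Lambda$. Therefore $\langle\nabla H(v),F(v)\rangle>0$ for every $v\in\Sigma\setminus\Lambda$, which is the desired conclusion.

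\smallskip
No step is conceptually difficult once the Cauchy--Schwarz identification is made; the only mild subtlety is treating $\Sigma^3$, since the formula for $F$ there uses the continuous extension rather than the explicit quotient formula, but this is resolved by the direct verification that points of $\Sigma^3$ lie in $\Lambda$.
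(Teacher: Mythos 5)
Your proof is correct and takes essentially the same route as the paper: the paper derives the identical identity and recognizes the bracketed term as $H(v)^{-1}$ times the variance $\var_v(h)$ of $h_i(v):=v_i^{\alpha-1}H_i(v)$ under the probability measure $v$, which is precisely your weighted Cauchy--Schwarz inequality, with the same equality case (constancy of $h$ on $\supp(v)$) characterizing $\Lambda$. Your separate verification on $\Sigma^3$ is harmless extra caution the paper leaves implicit, since the quotient formula $\pi^V_k(v)=v_k^\alpha H_k(v)/H(v)$ already extends continuously to all of $\Sigma$ (as $H>0$ there), so the variance computation holds uniformly.
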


\begin{proof}
The map $H$ is $\mcc^1$ on $\Sigma$. For $v\in \Sigma$, set $h(v):\Vset\to \Rset$ such that for $i\in\Vset$,
$h_i(v)=v_i^{\alpha-1}H_i(v)$. Then for $v\in\Sigma$,
$H(v)=\sum_i v_i^\alpha H_i(v)=\sum_i v_i h_i(v)=v h(v)$.

For $i,j\in\Vset$, with $i\neq j$, the maps $H_{i,j}$, $H_i$ and $H$ are defined on $\Sigma$. But we will consider here that they are respectively defined on $\Rset^N$ by \eqref{def:ffrw:Hxy}, \eqref{def:ffrw:Hx} and~\eqref{def:ffrw:H}. 
For $i,j\in\Vset$, we have
\begin{eqnarray}\partial_i H_j(v)=\left\{\begin{array}{ll} 0\;, & \hbox{ if $i=j$}\;,\\
2\alpha v_i^{\alpha-1}H_{i,j}(v)\;, & \hbox{ if $i\ne j$} \end{array}\right.\label{eq:frrw:dHj}\end{eqnarray}
and
\begin{eqnarray}
\partial_i H(v)&=&\sum_{j\neq i}v_j^\alpha \partial_i H_j(v)+\alpha v_i^{\alpha-1}H_i(v)\;,\nonumber\\
\label{eq:frrw:dH}&=&3\alpha v_i^{\alpha-1}H_i(v)\;.
\end{eqnarray}

Thus (using \eqref{def:frrw:hat pi})
\begin{align*}
\langle\nabla H(v), \pi^V(v)-v\rangle
&=3\alpha\left(\sum_i\frac{v_i^{2\alpha-1	}(H_i(v))^2}{H(v)}-\sum_i v_i^{\alpha}H_i(v) \right)\;,\\
&=\frac{3\alpha}{H(v)}\big[ v h^2(v)-\big(v h(v)\big)^2\big]\;,
\end{align*}
which is positive for all $v\in \Sigma\setminus\Lambda$.
This proves that $H$ is a strict Lyapunov function for $\Phi$.
\end{proof}

Hypotheses \ref{hyp:K} and \ref{hyp:th} hold and there is a strict Lyapunov function for $\Phi$. Thus by applying Theorem \ref{th:lyapunov} and Corollary \ref{cor:lyapunov}, if $H(\Lambda)$ has an empty interior, the limit set of $(v_n)$ is a connected subset of $\Lambda$ and if $\Lambda$ is a finite set, then $v_\infty:=\lim_{n\to\infty} v_n$ exists and ${v_\infty\in\Lambda}$.

\subsection{Equilibriums of $F$ when $\alpha=1$ and their stability.}
\label{sec:equilibriums_a=1}
Recall that $F$ is defined on $\Sigma$ and that $DF(v):T_0\Delta_\Vset\to T_0\Delta_\Vset$ is linear.
To calculate $D_uF(v)$, for $u\in T_0\Delta_\Vset$, it will be convenient to view $F$ as a map defined on $\Rset^N$ by \eqref{def:F}.  Note finally that $e_i-e_j\in T_0\Delta_\Vset$ and $e_i-v\in T_0\Delta_\Vset$, for all $i,j\in\Vset$.

\begin{proposition}
\label{prop:frrw:eq,a=1} 
When $\alpha=1$, the equilibriums of $F$ are the uniform probability measures on subsets of $\Vset$ containing at least three vertices. Moreover, the only stable equilibrium is the uniform probability measure on $\Vset$, and any other equilibrium is unstable.
\end{proposition}
\begin{proof}
Note that $v$ is an equilibrium if and only if for all $i\in\supp(v)$, $H_i(v)=H(v)$. 
Let $v\in\Sigma$ be uniform on $A\subset \Vset$. Set $m=|A|\ge 3$. Then it is straightforward to check that, for $i\in A$, $H_i(v)=(m-1)(m-2)m^{-2}=H(v)$. Thus $v$ is an equilibrium.

Let $v$ be an equilibrium.
Then for all $i\in\supp(v)$ and $j\in\supp(v)$, $H_i(v)=H_j(v)$.
For $i\neq j$,
$$H_i(v)-H_j(v)=2(v_j-v_i)\sum_{k\notin\{i,j\}} v_k.$$
Thus, since $|\supp(v)|\ge 3$, $H_i(v)=H_j(v)$ implies that $v_i=v_j$.
This proves that $v$ is uniform.

For $v\in \Sigma$, $F_i(v)=-v_i+\frac{v_i H_i(v)}{H(v)}$ for $i\in \Vset$ and for $i,j\in\Vset$,
\begin{eqnarray*}
\partial_j H_i(v)&=& \left\{\begin{array}{ll} 0\;, &\hbox{when $i=j$}\;,\\
2 H_{i,j}(v)\;, &\hbox{when $i\ne j$}\;,\end{array}\right.\\
\partial_i H(v)&=& 3 H_i(v)\;,  \hbox{ for $i\in \Vset$}.
\end{eqnarray*}
Therefore, for $i,j\in\Vset$,
\begin{eqnarray*}
\partial_i F_j(v) &=& 
\left\{\begin{array}{ll} -1+ \frac{H_i(v)}{H(v)}-3 v_i \left(\frac{H_i(v)}{H(v)}\right)^2\;,  &\hbox{when $i=j$}\;,\\
v_j\left(2 \frac{H_{i,j}(v)}{H(v)}-3 \frac{H_i(v)}{H(v)}\frac{H_j(v)}{H(v)}\right)\;,&\hbox{when $i\ne j$}.\end{array}\right.
\end{eqnarray*}

When $v$ is an equilibrium, for ${i\in\supp(v)}$
\begin{eqnarray*}
\partial_i F_j(v) &=& \left\{\begin{array}{ll} 0\;, &\hbox{if $j\notin\supp(v)$}\;,\\
-3v_i\;, & \hbox{if $j=i$} \;,\\
v_j\left(2\frac{H_{i,j}(v)}{H(v)}-3\right)\;,& \hbox{if $j\in\supp(v)$ with $j\ne i$}.\end{array}\right.
\end{eqnarray*}
and for $i\notin\supp(v)$,
\begin{eqnarray*}
\partial_i F_j(v) &=&\left\{\begin{array}{ll}
0\;,& \hbox{if $j\notin\supp(v)$ with $j\ne i$}\;,\\
 -1+\frac{H_i(v)}{H(v)}\;, & \hbox{if $j=i$}\;,\\
\frac{v_j}{H(v)}\left(2 H_{i,j} (v)-3 H_i(v)\right)\;,& \hbox{if $j\in\supp(v)$}.
\end{array}\right.
\end{eqnarray*}

Since $v$ is uniform on its support, denoting ${m=|\supp(v)|}$, $v_i=1/m$ for all $i\in\supp(v)$ and
\begin{eqnarray*}
H_{i,j}(v)&=&\left\{\begin{array}{ll}(m-2)m^{-1}\;, &\hbox{ for $i,j\in\supp(v)$ with $i\ne j$},\\
(m-1)m^{-1}\;, &\hbox{ for $i\in\supp(v)$ and $j\notin \supp(v)$}\;,\\
1 \;, &\hbox{ for $i,j\notin\supp(v)$ with $i\ne j$},\end{array}\right. \\
H_i(v)&=&\left\{\begin{array}{ll} (m-1)(m-2)m^{-2}\;, & \hbox{ for $i\in\supp(v)$},\\
(m-1)m^{-1}\;, & \hbox{ for $i\notin\supp(v)$}, \end{array}\right.\\
H(v)&=& (m-1)(m-2)m^{-2}\;.
\end{eqnarray*}

Thus for $i\in\supp(v)$,
\begin{eqnarray*}
\partial_iF(v)
&=& -3v_i e_i +\sum_{j\neq i}v_j\left(2\frac{H_{i,j}(v)}{H(v)}-3\right)e_j\;,\\
&=& -3v_i e_i -\frac{m-3}{m-1}\sum_{j\neq i}v_je_j\;,\\
&=& -\frac{m-3}{m-1}v-\frac{2}{m-1} e_i
\end{eqnarray*}
and for $i\notin\supp(v)$,
\begin{eqnarray*}
\partial_iF(v)
&=& \left(-1+\frac{H_i(v)}{H(v)}\right) e_i+\sum_{j\in\supp(v)} \frac{v_j}{H(v)}\big(2 H_{i,j} (v)-3 H_i(v)\big)e_j\;,\\
&=&\frac{2}{m-2} e_i-\frac{m}{m-2}v\;.
\end{eqnarray*}
Therefore for $i,j\in\supp(v)$, we have
\begin{eqnarray*}
D_{e_i-e_j}F(v) &=&- \frac{2}{m-1}(e_i-e_j)
\end{eqnarray*}
and for $i\notin\supp(v)$,
\begin{eqnarray*}
D_{e_i-v}F(v) &=& \frac{2}{m-2}(e_i-v)\;.
\end{eqnarray*}

Hence the spectrum of $DF(v):T_0\Delta_\Vset\to T_0\Delta_\Vset$ is completely described : $-2/(m-1)$ is an eigenvalue of multiplicity $m-1$ and $2/(m-2)$ is an eigenvalue of multiplicity ${N-m}$.
When $m=N$, i.e. when $v$ is uniform on $\Vset$, $-2/(m-1)<0$ is the only eigenvalue of $DF(v)$ and $v$ is stable. Whereas when $m< N$, $2/(m-2)>0$ is an eigenvalue and $v$ is unstable.
\end{proof}

\subsection{Equilibriums of $F$ when $\alpha>1$.}
\label{sec:equilibriums_a>1}
As in the previous section, $F$ is viewed as a map defined on $\Rset^N$ by \eqref{def:F}. 
Note that $v\in\Sigma$ is an equilibrium if and only if for all $i\in\supp(v)$, $v_i^{\alpha-1}H_i(v)=H(v)$.

\begin{proposition} \label{prop:frrw:unif_eq} Uniform probability measures on subsets of $\Vset$ containing at least three vertices are equilibriums for $F$.
\end{proposition}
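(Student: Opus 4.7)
The plan is to verify the defining equation $v_* = \pi^V(v_*)$ directly for $v_* = \mu_B$ with $B \subset \Vset$ of cardinality $K \ge 3$. For $K \ge 4$, the measure $\mu_B$ lies in $\mathring{\Sigma}$ (its support has at least four points, so by Remark~\ref{rem:sigma3}, $\mu_B \in \Sigma \setminus \Sigma^3$), and the closed-form expression \eqref{def:frrw:hat pi} applies directly. For the boundary case $K = 3$, $\mu_B \in \Sigma^3$ and $P(\mu_B)$ fails to be indecomposable, but Hypothesis~\ref{hyp:th}-\eqref{hyp:th:pi} supplies a continuous extension of $\pi^V$ to all of $\Sigma$; I would argue that the rational function $v \mapsto v_k^\alpha H_k(v)/H(v)$ remains continuous at $\mu_B$ in the case $K=3$ because $H(\mu_B) > 0$ (a one-line computation gives $H(\mu_B) = 6/3^{3\alpha}$). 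By uniqueness of the continuous extension, this rational function coincides with $\pi^V$ at $\mu_B$ as well, so the same formula~\eqref{def:frrw:hat pi} can be used uniformly for all $K \ge 3$.

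The remaining combinatorial check is routine. For $k \notin B$, one has $v_k = 0$, and the formula trivially gives $\pi^V_k(\mu_B) = 0 = \mu_B(k)$. For $k \in B$, the only nonzero contributions to $H_k(\mu_B)$ come from ordered pairs $(j,k')$ of \emph{distinct} elements of $B \setminus \{k\}$, each contributing $1/K^{2\alpha}$; hence $H_k(\mu_B) = (K-1)(K-2)/K^{2\alpha}$. Summing the identity $v_i^\alpha H_i(\mu_B)$ over $i \in B$ yields $H(\mu_B) = (K-1)(K-2)/K^{3\alpha-1}$, and plugging into the formula for $\pi^V_k$ the powers of $K$ collapse to $\pi^V_k(\mu_B) = 1/K = \mu_B(k)$.

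There is no genuine obstacle here: the only delicate point is justifying that formula~\eqref{def:frrw:hat pi} remains valid at the $\Sigma^3$--boundary, which is handled by continuity together with the non-vanishing of $H$ on uniform measures supported on three or more vertices. Once this is in place, the conclusion $F(\mu_B) = -\mu_B + \pi^V(\mu_B) = 0$ is immediate.
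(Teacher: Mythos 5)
Your proof is correct and follows essentially the same route as the paper: plug $\mu_B$ into the explicit formula $\pi^V_k(v) = v_k^\alpha H_k(v)/H(v)$ and verify $\pi^V(\mu_B) = \mu_B$ by direct computation of $H_k$ and $H$. The one place you add something the paper leaves implicit is the justification for using formula~\eqref{def:frrw:hat pi} at $K=3$, where $\mu_B \in \Sigma^3$ and the formula was only stated on $\Sigma\setminus\Sigma^3$; your continuity-plus-nonvanishing-of-$H$ argument (with uniqueness of the continuous extension on the dense set $\mathring{\Sigma}$) correctly closes that small gap, and it is consistent with the remark in the paper's preceding proposition that $H>0$ on all of $\Sigma$ and that $\pi^V$ extends continuously.
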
 

\begin{proof} Let $v$ be a uniform probability measure on a subset $A$ of $\Vset$, with $m:=|A|\ge 3$. 
Then, for $i\in A$, $v_i=m^{-1}$ and $v_i^{\alpha-1}H_i(v)=m(m-1)(m-2) m^{-3 \alpha}=H(v)$ and 
$v$ is an equilibrium.
\end{proof}

When $\alpha>1$, the set $\Lambda$ is not explicitly described. However, it is possible to state some properties. To this aim, we introduce some notations.
\begin{definition}\ \label{def:lambda}
\begin{itemize}
\item For $1\le m\le N$, denote by $\mu_m$ the uniform probability measure on $\{1,\dots,m\}$.
\item For $1\le k<m\le N$, let $\Sigma_{k,m}$ be the set of all $p\mu_k+(1-p)\mu_m \in \Sigma$, such that $p\in (0,1)$ and $p/k+(1-p)/m\le 1/3$.
\item For $1\le k<\ell<m\le N$, let $\Sigma_{k,\ell,m}$ be the set of all $p_1\mu_{k}+p_2\mu_{\ell}+p_3\mu_{m}\in\Sigma$, such that $p_i>0$, $\sum_i p_i=1$ and $p_1/k+p_2/\ell+p_3/m\le 1/3$.
\item Let $\vec{\Sigma}$ be the set of all $v\in\Sigma$ such that $1/3\ge v_1\ge v_2\ge\cdots\ge v_N$.
\end{itemize}
\end{definition}
Note that $\Sigma_{k, m}\subset \vec\Sigma$ and $\Sigma_{k, \ell, m}\subset \vec\Sigma$, for all $1\le k<\ell<m\le N$.

\begin{remark}
If $v\in \Lambda$ is an equilibrium and $\sigma$ is a permutation of $\{1,\dots,N\}$, then $v\circ\sigma:=(v_{\sigma(1)}, \cdots, v_{\sigma(N)})$ is also an equilibrium.
Hence, if $v\in\Lambda$, then there is a permutation $\sigma$ such that $v\circ\sigma\in\Lambda\cap\vec{\Sigma}$.
\end{remark}

\begin{proposition} If $v\in\vec{\Sigma}$ is an equilibrium of $F$, then either
\begin{itemize}
\item $v=\mu_m$, for some $3\le m\le N$,
\item or $v\in\Sigma_{k,m}$, for some $1\le k<m\le N$, and $m\ge 4$,
\item or $v\in\Sigma_{2,3,m}\cup \cup_{k=1}^{m-2}\Sigma_{1,k+1,m}$, for some $1\le k<m\le N$, and $m\ge 4$. 
\end{itemize}
\end{proposition}

This proposition is a consequence of Lemmas \ref{l(H)le3} and \ref{lem:Ekn}.

\blem \label{l(H)le3}
 Let $v\in\Lambda$. Then $|\{i\,:\, v_i>0\}|\in\{1,2,3\}$.
Moreover $|\supp(v)|\ge 3$, and if $|\supp(v)|=3$, then $v$ is uniform on $\supp(v)$.
\elem

\begin{proof} Let $v\in\Lambda$. Then for all $i\in\supp(v)$, $v_i^{\alpha-1}H_i(v)=H(v)$.

Set $c_1=\sum_{j} v_j^\alpha$ and $c_2=\sum_{j\ne k} v_j^\alpha v_k^\alpha$. Then, for all $i$,
$$H_i(v)=c_2-2v_i^\alpha (c_1-v_i^\alpha).$$
Thus, for all $i$, $v_i^{\alpha-1} H_i(v)=f(v_i^\alpha)$ where
$$f(x)=x^\beta \left[2x^2-2c_1x+c_2\right].$$
We have
$$f'(x) = t^{\beta-1}[2(2+\beta) x^2 -2(1+\beta) c_1 x +\beta c_2].$$
Set $\Delta=(1+\beta)^2c^2_1-2\beta(2+\beta)c_2$. Then, when $\Delta\le 0$, $f$ is increasing. And, when $\Delta>0$, setting $x_\pm=\frac{(1+\beta)c_1\pm \sqrt{\Delta}}{2(2+\beta)}$, $f$ is increasing on $[0,x_-]$, decreasing on $[x_-,x_+]$ and increasing on $[x_+,\infty)$. 

For $H>0$, set $\ell(H):=|\{x\ge 0:\, f(x)=H\}|$.
Then, when $\Delta \le 0$, $\ell(H)=1$.
When $\Delta>0$, $\ell(H)=1$ if $H\not\in [f(x_+),f(x_-)]$, $\ell(H)=2$ if $H\in\{f(x_+),f(x_-)\}$ and $\ell(H)=3$ if $f(x_+)<H<f(x_-)$.

Now since for all $i$ such that $v_i>0$, we have $f(v_i^\alpha)=H(v)$, this proves that $|\{i\,:\, v_i>0\}|\in\{1,2,3\}$.
Since $v\in\Sigma$, then $|\supp(v)|\ge 3$. It is straightforward to check that if ${|\supp(v)|=3}$, then $v$ is uniform on $\supp(v)$.
\end{proof}

\blem \label{lem:Ekn} Let $v\in\vec{\Sigma}$ be an equilibrium such that $|\{i:\,v_i>0\}|=3$  and $|Supp(v)|=m\ge 4$. Then $v\in \Sigma_{2,3,m}\cup \cup_{k=1}^{m-2}\Sigma_{1,k+1,m}$. \elem
\begin{proof} Suppose there is an equilibrium $v\in \Sigma_{k,\ell,m}$.
Using the notation of the proof of Lemma \ref{l(H)le3}, we get that $\Delta>0$ and $f(x_+)<H(v)<f(x_-)$. 
Thus, there is $x_1>x_2>x_3$ such that $f(x_1)=f(x_2)=f(x_3)=H(v)$. Note that $x_1>x_+$ and $x_2>x_-$ so that $x_1+x_2>\frac{1+\beta}{2+\beta} c_1$. 
For $j\in\{1,2,3\}$, set $k_j=|\{i:\; v^\alpha_{i}=x_j\}|$. Then $k=k_1$, $\ell=k_1+k_2$ and $m=k_1+k_2+k_3$. 
We also have that $c_1>k_1 x_1+ k_2 x_2$. Therefore, $(2+\beta)(x_1+x_2)>(1+\beta)(k_1 x_1+ k_2x_2)$. 
If $k_1\ge 2$ and $k_2\ge 2$, this implies that  $(2+\beta)(x_1+x_2)>2(1+\beta)(x_1+x_2)$. This is a contradiction. Suppose that $k_1\ge 3$ and $k_2=1$, then we get $(2+\beta)(x_1+x_2)>(1+\beta)(3 x_1+ x_2)>2(1+\beta)(x_1+x_2)$, which is again a contradiction.
This proves the lemma.
\end{proof}
	
\subsection{Stability of the equilibriums of $F$ when $\alpha>1$}
In the following it will be convenient to set $\beta:=\frac{\alpha-1}{\alpha}\in (0,1)$. Our objective in the following sections is to prove the following proposition.
\begin{proposition}\label{prop:Lambdafinite}
When $\alpha>1$, then $\Lambda$ is a finite set. Moreover, if $v\in\Lambda$ and if $\beta_m:=\frac{2}{m-1}$, where $m:=|\supp(v)|$, then
\begin{enumerate}
\item[(i)] $v$ is stable if and only if $v$ is uniform and $\beta<\beta_m$;
\item[(ii)] $v$ is unstable if and only if $v$ is not uniform or if $\beta>\beta_m$.
\end{enumerate}
\end{proposition}
Note that $\beta < \beta_m$ if and only if $m<\frac{3\alpha-1}{\alpha-1}$.
This proposition is a consequence of Lemmas \ref{l(H)le3}, \ref{lem:Ekn}, Propositions \ref{prop:frrw:st_unif_eq,a>1}, \ref{propinst}, \ref{prop:unstable} and \ref{propEkn<infty}.

The following lemma provides useful properties in order to study the stability of an equilibrium. For $v\in\Sigma$ and ${i\in\Vset}$, we will use the convention $H_{i,i}(v)=0$.

\begin{lemma}
\label{lem:frrw:Dx-Dy,a>1}
Let $v$ be an equilibrium. Then $DF(v)$ is a self-adjoint operator on $\mathbb{R}^N$ with respect to the inner product $\langle \cdot,\cdot\rangle_v$ defined by
$\langle x,y\rangle_{v}=\sum_{i\in\Vset:\; v_i>0} x_iy_i/v_i+\sum_{i\in\Vset:\; v_i=0}x_iy_i$.
In particular, setting $I(v):=\{i:\,v_i=0\}$,
\begin{enumerate}
\item[(i)]$DF(v)$ is diagonalisable and its eigenvalues are all real.
\item[(ii)] $-1$ is an eigenvalue with eigenspace containing $v$ and $\{e_i:\; i\in I(v)\}$. 
\item[(iii)] The vector space $E:=\{u\in\mathbb{R}^N:\; \sum_k u_k=0 \hbox{ and $u_i=0$ for all $i\in I(v)$}\}$ is stable for $DF(v)$ and orthogonal to the vector space spanned by $v$ and $\{e_i:\; v_i=0\}$ for the inner product $\langle \cdot,\cdot\rangle_v$.
\item[(iv)] The trace of $DF(v)$ restricted to $E$ is equal to $m(\alpha-1)-(3\alpha-1)$, where $m=|\{i:\,v_i>0\}|$.
\end{enumerate}
Moreover, for all $i,j\in\supp(v)$,
\begin{equation}
\label{eq:frrw:Dx-Dy,a>1}
D_{e_i-e_j}F(v) = \big(\alpha-1\big)(e_i-e_j)+\frac{2\alpha}{H(v)}\sum_{k\in\Vset} v_k^\alpha \left(v_i^{\alpha-1}H_{i,k}(v)-v_j^{\alpha-1}H_{j,k}(v)\right)  e_k.
\end{equation}
\end{lemma}

\begin{proof}
Let $v$ be an equilibrium. 
Recall that (see \eqref{def:frrw:hat pi}, \eqref{eq:frrw:dHj} and \eqref{eq:frrw:dH}) for $i, j\in \Vset$,
\begin{eqnarray*}
F_i(v)&=&-v_i+\frac{v_i^\alpha H_i(v)}{H(v)}\;,\\
\partial_i H_j(v)&=&
\left\{\begin{array}{ll} 0\;, & \hbox{when $i=j$}\;,\\
2\alpha v_i^{\alpha-1}H_{i,j}(v)\;, &\hbox{when $i\ne j$}\;,
\end{array}\right.\\
\partial_i H(v)&=& 3\alpha v_i^{\alpha-1}H_i(v).
\end{eqnarray*}
We also have
\begin{eqnarray*}
\partial_i F_j(v) &=&\left\{\begin{array}{ll}-1+\alpha \frac{v_i^{\alpha-1}H_i(v)}{H(v)}-3\alpha v_i \left(\frac{v_i^{\alpha-1}H_i(v)}{H(v)}\right)^2\;,& \hbox{when $i=j$}\;,\\
v_j\left(2\alpha \frac{v_i^{\alpha-1}v_j^{\alpha-1}H_{i,j}(v)}{H(v)}-3\alpha \frac{v_i^{\alpha-1}H_i(v)}{H(v)}\frac{v_j^{\alpha-1}H_j(v)}{H(v)}\right)\;, & \hbox{when $i\ne j$}.
\end{array}\right.
\end{eqnarray*}
Note first that if $v_i=0$ or if $v_j=0$, then $\partial_iF_j(v)=-\delta_{i,j}$.
Since $v$ is an equilibrium, when $v_i>0$, we have $v_i^{\alpha-1} H_i(v)=H(v)$ and thus
\begin{eqnarray*}
\partial_i F_j(v) &=&\left\{\begin{array}{ll}
\alpha-1 -3\alpha v_i \;,& \hbox{if $j=i$}\;,\\
v_j\left(2\alpha v_i^{\alpha-1}v_j^{\alpha-1}\frac{H_{i,j}(v)}{H(v)}-3\alpha\right)\;, & \hbox{if $j\ne i$ and $v_j>0$\;,}
\end{array}\right.
\end{eqnarray*}
which implies that for $v_i>0$,
\begin{eqnarray*}
\partial_i F(v)
&=& (\alpha-1 -3\alpha v_i)e_i+\sum_{j\neq i} v_j\left(2\alpha v_i^{\alpha-1}v_j^{\alpha-1}\frac{H_{i,j}(v)}{H(v)}-3\alpha\right) e_j\;,\\
&=& (\alpha-1)e_i -3\alpha v+\frac{2\alpha}{H(v)} \sum_{j} v_i^{\alpha-1}v_j^{\alpha}H_{i,j}(v) e_j
\end{eqnarray*}
and thus \eqref{eq:frrw:Dx-Dy,a>1} follows.

Set $a_{i,j}=\frac{\partial_iF_j(v)}{v_j}$ if $v_j>0$ and $a_{i,j}=-\delta_{i,j}$ if $v_j=0$. Then $A=(a_{i,j})$ is a symmetric matrix. Let $\langle \cdot,\cdot\rangle_v$ be the inner product on $\mathbb{R}^N$ defined by
$\langle x,y\rangle_{v}=\sum_{i\in\Vset:\; v_i>0} x_iy_i/v_i+\sum_{i\in\Vset:\; v_i=0}x_iy_i$. Then 
$\langle D_xF(v),y\rangle_{v}=\sum_{i,j} a_{i,j}x_iy_j$, and $DF(v)$ is self-adjoint with respect to this inner product. It is thus diagonalisable with real eigenvalues.

When $v_i=0$, $D_{e_i}F(v)=-e_i$. We also have that
\begin{eqnarray*}
D_vF(v)
&=& (\alpha-1)v -3\alpha v +\frac{2\alpha}{H(v)}\sum_k \sum_j v_j^{\alpha}v_k^{\alpha}H_{j,k}(v)e_j\;,\\
&=&  -v-2\alpha v + 2\alpha\sum_j    \frac{v_j^\alpha H_{j}(v)}{H(v)}e_j \,=\, -v\;.
\end{eqnarray*}
This proves (ii). The proof of (iii) is straightforward. To prove (iv), we have that for $v$ an equilibrium, 
$\hbox{Trace}(DF(v))=\sum_i\partial_i F_i(v)=m(\alpha-1)-3\alpha - (N-m)$ and 
we get (iv) since the trace of $DF(v)$ restricted to $E$ is $\hbox{Trace}(DF(v))+(N-m+1)$ 
(using that $N-m+1$ is the dimension of the vector space spanned by $v$ and $\{e_i:\;v_i=0\}$). \end{proof}

\begin{proposition}\label{prop:frrw:st_unif_eq,a>1}
When $\alpha>1$, a uniform probability measure on a subset of $\Vset$ containing $m\ge 3$ vertices  is stable if and only if $m<\frac{3\alpha-1}{\alpha-1}$ and is unstable if and only if $m>\frac{3\alpha-1}{\alpha-1}$.
\end{proposition}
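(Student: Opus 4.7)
The plan is to diagonalize $DF(v)$ completely at the uniform equilibrium $v = \mu_A$ (with $A \subset \Vset$, $|A|=K\ge 3$) by exhibiting an explicit eigenbasis of $T_0\Delta_\Vset$, and then to read off stability from the signs of the eigenvalues.

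First, I would dispose of the directions transverse to $A$. For each $i \notin A$, formula \eqref{eq:frrw:Dx-v,a>1} of Lemma~\ref{lem:frrw:Dx-Dy,a>1} says immediately that $D_{e_i - v}F(v) = -(e_i - v)$, producing $N-K$ stable eigenvectors with eigenvalue $-1$. No further work is needed here.

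Next, I would attack the ``inside'' directions $e_i - e_j$ with $i, j \in A$ using \eqref{eq:frrw:Dx-Dy,a>1}. Plugging in $v_i = v_j = 1/K$ and the explicit values $H_{i,k}(v) = (K-2)/K^\alpha$ (for $k \in A\setminus\{i\}$), $H_{i,i}(v) = 0$, and $H(v) = (K-1)(K-2)/K^{3\alpha-1}$ should trigger a drastic collapse of the sum over $k$: terms with $k \notin A$ vanish since $v_k^\alpha = 0$, and terms with $k \in A \setminus \{i, j\}$ cancel by the symmetry $v_i^{\alpha-1}H_{i,k}(v) = v_j^{\alpha-1}H_{j,k}(v)$. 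Only $k = i$ and $k = j$ survive, and a short computation then yields
\begin{equation*}
D_{e_i - e_j}F(v) = \lambda_K (e_i - e_j), \qquad \lambda_K := (\alpha-1) - \frac{2\alpha}{K-1} = \frac{\alpha(K-3) - (K-1)}{K-1}.
\end{equation*}

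To finish, I would note that $\{e_i - v : i \notin A\} \cup \{e_i - e_{i_0} : i \in A \setminus \{i_0\}\}$, for any fixed $i_0 \in A$, is a basis of $T_0\Delta_\Vset$: its cardinality is $N-1 = \dim T_0\Delta_\Vset$, and linear independence follows by inspecting the coordinates indexed by $\Vset \setminus A$. Consequently $DF(v)$ has spectrum $\{-1, \lambda_K\}$ with multiplicities $N-K$ and $K-1$, and since $-1 < 0$ always, stability is controlled solely by $\lambda_K$. The inequality $\lambda_K < 0 \Leftrightarrow \alpha(K-3) < K-1 \Leftrightarrow \alpha < (K-1)/(K-3)$ (with the convention $(K-1)/(K-3) = +\infty$ when $K=3$, for which $\lambda_3 = -1 < 0$ automatically) and the analogous strict inequality for $\lambda_K > 0$ give the proposition. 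The only genuine obstacle is identifying the cancellation in \eqref{eq:frrw:Dx-Dy,a>1} that reduces the $N$-fold sum to two surviving terms; once that is seen, the rest is routine bookkeeping.
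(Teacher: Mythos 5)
Your proposal is correct and follows essentially the same route as the paper: both use Lemma~\ref{lem:frrw:Dx-Dy,a>1} to read off the eigenvalue $-1$ (multiplicity $N-K$) from the transverse directions $e_i-v$, $i\notin A$, and the eigenvalue $(\alpha-1)-\frac{2\alpha}{K-1}=-1+\alpha\frac{K-3}{K-1}$ (multiplicity $K-1$) from the internal directions $e_i-e_j$, $i,j\in A$, then conclude from the sign of the latter. The only difference is that you spell out the collapse of the sum in \eqref{eq:frrw:Dx-Dy,a>1} to the two surviving terms $k=i,j$ and state explicitly that the chosen eigenvectors span $T_0\Delta_\Vset$, details the paper leaves implicit.
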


\begin{proof}
Let $v$ be a uniform measure on a subset of $\Vset$ containing $m\ge 3$ vertices. Suppose that ${\alpha>1}$. We have
\begin{eqnarray*}
H_{i,j}(v)&=& (m-2)m^{-\alpha}\;, \qquad\hbox{ for $i,j\in\supp(v)$ with $i\neq j$}\;, \\
H(v)&=&m(m-1)(m-2)m^{-3\alpha}.
\end{eqnarray*}
Using \eqref{eq:frrw:Dx-Dy,a>1}, for $i,j\in\supp(v)$,
$$D_{e_i-e_j}F(v) = \left(-1+\alpha \left(\frac{m-3}{m-1}\right)\right)(e_i-e_j)$$
and, using Lemma~\ref{lem:frrw:Dx-Dy,a>1}, this completes the description of the spectrum of ${DF(v)}$ : $-1$ is an eigenvalue of multiplicity $N-m+1$ and $-1+\alpha \left(\frac{m-3}{m-1}\right)$ is an eigenvalue of multiplicity $m-1$. 
Hence the proposition.
\end{proof}

\begin{remark}
When $\alpha>1$, uniform probability measures on subsets of $\Vset$ containing exactly three vertices are always stable equilibriums.
\end{remark}

\begin{proposition} \label{propinst}
If $v$ is an equilibrium such that $m:=|\{i:\,v_i>0\}|>\frac{3\alpha-1}{\alpha-1}$, then $v$ is unstable.
\end{proposition}
\begin{proof}
It is a simple consequence of (iv) of Lemma \ref{lem:frrw:Dx-Dy,a>1}. If $m>\frac{3\alpha-1}{\alpha-1}$, then the trace of $DF(v)$ restricted to $E$ is positive ($E$ defined in (iii) of Lemma~\ref{lem:frrw:Dx-Dy,a>1}). Thus there is a positive eigenvalue and $v$ is unstable.
\end{proof}

\subsubsection{Equilibriums in $\Sigma_{k,m}$ and their stability}\label{SIgmakn}
In this section, $k$, $m$ and $\alpha$ are given such that $1\le k<m\le N$ and $m\ge 4$. We set $\ell=m-k$, $\beta=\frac{\alpha-1}{\alpha}$ and $\beta_m=\frac{2}{m-1}$.
\begin{proposition}\label{prop:unstable} There is a finite number of equilibriums in $\Sigma_{k,m}$, and all these equilibriums are unstable. More precisely, 
\begin{enumerate}[(i)]
\item If $k\in\{1,2\}$, then
\begin{itemize}
\item if $\beta\le \beta_m$, there is no equilibrium in $\Sigma_{k,m}$,
\item if $\beta>\beta_m$, there is exactly one equilibrium in $\Sigma_{k,m}$.
\end{itemize}
\item If $3\le k<m/2$, then there is $\beta_{k,m}\in (\beta_m,1)$ such that
\begin{itemize}
\item if $\beta \le \beta_m$ or if $\beta=\beta_{k,m}$, there is exactly one equilibrium in $\Sigma_{k,m}$,
\item if $\beta_m<\beta<\beta_{k,m}$, there are exactly two equilibriums in $\Sigma_{k,m}$,
\item if $\beta > \beta_{k,m}$, there is no equilibrium in $\Sigma_{k,m}$.
\end{itemize}
\item If $m/2\le k\le m$, then
\begin{itemize}
\item if $\beta < \beta_m$, there is exactly one equilibrium in $\Sigma_{k,m}$,
\item if $\beta\ge\beta_m$, there is no equilibrium in $\Sigma_{k,m}$.
\end{itemize}
\end{enumerate}
\end{proposition}
\begin{proof}
For $v\in\Sigma_{k,m}$, set $c=v_m$ and $a=v_1/v_m$. Note that $a>1$, $ac\le\frac13$ and $c=(ka+\ell)^{-1}$.
Then $F_i(v)=F_1(v)$ for ${1\le i\le k}$, ${F_i(v)=F_m(v)}$ for $k+1\le i\le m$ and $F_i(v)=0$ for ${i\ge m+1}$, and
\begin{eqnarray*}
F_1(v)&=&-ac+a^\alpha K_1 K^{-1}\;,\\
F_m(v)&=&-c+K_2 K^{-1}\;,
\end{eqnarray*}
where
\begin{eqnarray}
K_1&=&c^{-2\alpha}H_1(v)\,=\,(k-1)(k-2)a^{2\alpha}+2\ell (k-1) a^\alpha + \ell(\ell-1)\;,\nonumber\\
K_2&=&c^{-2\alpha}H_2(v)\,=\,k(k-1)a^{2\alpha}+2k(\ell-1) a^\alpha + (\ell-1)(\ell-2)\;,\nonumber\\
\label{equat:K}K &=& c^{-2\alpha}H(v)\,=\,ka^\alpha K_1 + \ell K_2.
\end{eqnarray}
Set $u:=\frac{m}{\ell}(\mu_k-\mu_m)$. Then $u_i=k^{-1}$ for ${1\le i\le k}$, $u_i=-\ell^{-1}$ for $k+1\le i\le m$ and $u_i=0$ for ${i\ge n+1}$. Then, using $c=(ka+\ell)^{-1}$ and \eqref{equat:K},
\begin{equation}
F(v)=\frac{k\ell ac}{K}[a^{\alpha-1}K_1-K_2] u.
\end{equation}
Set $x=a^\alpha-1$. Then,
$K_1= (m-1)(m-2) B(x)$ and $K_2= (m-1)(m-2)A(x)$ with
\begin{eqnarray*}
A(x) &=& 1+2a_1x+a_1a_2x^2 \;,\\
B(x) &=& 1+2b_1x+b_1b_2x^2 \;,
\end{eqnarray*}
where
$$\begin{array}{ll}
a_1=\frac{k}{m-1}\;, & a_2=\frac{k-1}{m-2}\;,\\
b_1=\frac{k-1}{m-1}\;, & b_2=\frac{k-2}{m-2}\;.
\end{array}$$
Let $\phi$ be the function defined by $\phi(x)=\frac{\log(A(x))-\log(B(x))}{\log(1+x)}$.
Then
\begin{equation}
F(v)=g(a)[(1+x)^{\beta-\phi(x)}-1] u\;,
\end{equation}
with $g(a)=\frac{k\ell ac K_2}{K}$. Note that $g$ is $C^1$ and positive on $(0,\infty)$.

\medskip
The function $\phi$ is studied in the following lemma.
\begin{lemma}\label{lem:phi}
We have $\lim_{x\to 0+}\phi(x)=\beta_m$, and 
\begin{itemize}
\item If $k\in\{1,2\}$, then $\phi$ is increasing, with $\phi'(x)>0$ for all $x\in (0,\infty)$, and $\lim_{x\to\infty}\phi(x)=\left\{\begin{array}{ll} \infty &\hbox{ if } k=1\;,\\ 1 &\hbox{ if } k=2\;.\end{array}\right.$
\item If $k\ge 3$, then $\lim_{x\to\infty}\phi(x)=0$ and there is $x_1\ge 0$ such that $\phi'(x)>0$ for all $x\in (0,x_1)$ and $\phi'(x)<0$ for all $x>x_1$. Moreover, if $k<m/2$, then $x_1>0$ and if $k\ge m/2$, then $x_1=0$.
\end{itemize}
\end{lemma}
The proof of this lemma is given in the appendix.

\medskip
Since $v$ is an equilibrium if and only if $\phi(x)=\beta$, Lemma~\ref{lem:phi} easily implies (i), (ii) and (iii), taking in (iii), $\beta_{k,m}=\max\{\phi(x):\,x>0\}$.
Since Proposition \ref{propinst} states that equilibriums are unstable when $\beta>\beta_m$,
it thus remains to prove that for $3\le k\le m$ and $\beta \le \beta_m$, the equilibrium in $\Sigma_{k,m}$ is unstable.

So, we suppose $3\le k\le m$ and $\beta \le \beta_m$. In this case, $\phi'(x)<0$ for all $x>x_1$ and $\phi(x)>\beta_m$ for all $x\in (0, x_1]$, thus there is a unique $x_*>0$ such that $\phi(x_*)=\beta$. Set $a_*=(x_*+1)^{\frac{1}{\alpha}}$, then $v_*:=v(a_*)$ is the equilibrium in $\Sigma_{k,m}$.
Note that $\frac{dv(a)}{da}=k\ell c^2 u$ and thus that
$D_uF(v(a))=\frac{1}{k\ell c^2}\frac{d}{da} F(v(a))$.
We have,
$$\frac{d}{da} F(v(a_*)) = -\alpha a_*^{\alpha-1}g(a_*)\phi'(x_*)\log(1+x_*) u.$$
Therefore, there is $\lambda_*>0$ such that $D_uF(v_*)=\lambda_* u$, and $v_*$ is an unstable equilibrium.
\end{proof}

\subsubsection{Equilibriums in $\Sigma_{2,3,m}\cup \cup_{k=1}^{m-2}\Sigma_{1,k+1,m}$ and their stability}
\label{cupEkn}
In this section, $k$, $m$ and $\alpha$ are given such that $1\le k\le m-2\le N$ and $m\ge 4$. We set $\ell=m-k-1$, $\beta=\frac{\alpha-1}{\alpha}$ and $\beta_m=\frac{2}{m-1}$.
When we say that $v\in\Sigma$ belongs, up to a permutation of the indices, to a subset $A$ of $\Sigma$, we mean that there is a permutation $\sigma$ such that $v\circ\sigma\in A$.

Define the mapping $(a,b)\mapsto v(a,b)\in\Sigma$ by $v_1(a,b)=ac$, $v_i(a,b)=bc$ for $2\le i\le k+1$ and $v_i(a,b)=c$ for $k+2\le i\le m$ and $v_i(a,b)=0$ for $m+1\le i\le N$, and where $c=(a+kb+\ell)^{-1}$. 
Note that, if $c<bc<ac\le \frac13$, then $v(a,b)\in \Sigma_{1,k+1,m}$ and if $c<ac<bc\le \frac13$, then (up to a permutation of the indices) $v(a,b)\in \Sigma_{k,k+1,m}$. 
Denote by $E_{k,m}$ the set of all $v\in \Sigma$ such that $v=v(a,b)$ for some $a>1$ and $b>1$ with $a\ne b$ (note that for $a>1$, $v(a,a)\in \Sigma_{k+1,m}$ and $v(a,1)\in\Sigma_{1,m}$ and that for $b>1$, up to a permutation of indices, $v(1,b)\in\Sigma_{k,m}$).

For $v=v(a,b)\in E_{k,m}$, we have $F_i(v)=F_2(v)$ for $2\le i\le k+1$, $F_i(v)=F_m(v)$ for $k+2\le i\le m$ and $F_i(v)=0$ for $m+1\le i\le N$, and
\begin{eqnarray*}
F_1(v)&=&-ac+a^\alpha K_1 K^{-1}\;,\\
F_2(v)&=&-bc+b^\alpha K_2 K^{-1}\;,\\
F_m(v)&=&-c+K_3 K^{-1}\;,
\end{eqnarray*}
where $K_1 = c^{-2\alpha}H_1(v)$, $K_2 = c^{-2\alpha}H_2(v)$, $K_3 = c^{-2\alpha}H_m(v)$ and $K =c^{-2\alpha} H(v)$. We have
\begin{eqnarray*}
K_1&=&k(k-1)b^{2\alpha}+2k\ell b^\alpha + \ell(\ell-1)\;,\\
K_2&=&(k-1)(k-2)b^{2\alpha}+2(k-1)\ell b^\alpha + \ell(\ell-1) + 2a^\alpha[(k-1)b^\alpha+\ell]\;,\\
K_3&=&k(k-1)b^{2\alpha}+2k(\ell-1) b^\alpha + (\ell-1)(\ell-2) + 2a^\alpha[kb^\alpha+(\ell-1)]\;,\\
K &=& a^\alpha K_1 + kb^\alpha K_2 + \ell K_3.
\end{eqnarray*}
Then, $v(a,b)\in E_{k,m}\cup\Sigma_{k+1,m}$ is an equilibrium if and only if $a^{\alpha-1}K_1=b^{\alpha-1}K_2=K_3$.

Set $x=a^\alpha$ and $y=b^\alpha$. Then,

\begin{eqnarray}
\left\{\label{def:K123} \begin{array}{ll}
 K_1 &= P_1(y)\;,\\
K_2 &= P_2(y)+2x[(k-1)y+\ell]\;,\\
 K_3 &= P_3(y)+2x[ky+(\ell-1)],
\end{array}\right.
\end{eqnarray}
where
\begin{eqnarray*}
P_1(y)&=&k(k-1)y^2+2k\ell y + \ell(\ell-1)\;,\\
P_2(y)&=&(k-1)(k-2)y^2+2(k-1)\ell y + \ell(\ell-1)\;,\\
P_3(y)&=&k(k-1)y^2+2k(\ell-1) y + (\ell-1)(\ell-2).
\end{eqnarray*}
Note that 
\begin{eqnarray}
\label{eq:K3-K1} K_3-K_1&=& 2(x-1)[ky+(\ell-1)]\;,\\
\label{eq:K3-K2} K_3-K_2&=&2(y-1)[x+(k-1)y+(\ell-1)]\;, \\
\label{eq:K2-K1} K_2-K_1&=&2(x-y)[(k-1)y+\ell] 
\end{eqnarray}
and that $v:=v(a,b)$ is an equilibrium if and only if $x^\beta K_1=y^\beta K_2=K_3$.

\begin{lemma}
The mapping $f_\beta: (1,\infty)\to [0,\beta]$ defined by  $f_\beta(x)=\frac{x^\beta-1}{x-1}$ is strictly decreasing with $\lim_{x\to 1}f_\beta(x)=\beta$ and $\lim_{x\to\infty} f_\beta(x)=0$.
\end{lemma}
\begin{proof}
For $x>1$, $f'_\beta(x)=\frac{u(x)}{(x-1)^2}$ with $u(x)=-(1-\beta)x^\beta -\beta x^{\beta-1}+1$. 
We have $u'(x)=-\beta(1-\beta)x^{\beta-2}(x-1)<0$, and therefore $u(x)<u(1)=0$. 
This proves that $f_\beta$ is strictly decreasing. 
\end{proof}

\begin{proposition}\ \label{propEkn<infty} \begin{enumerate}[(i)]
\item \label{propEkn<infty:beta>betam} If $\beta>\beta_m$, then there is a finite number of equilibriums in $E_{k,m}$.
\item \label{propEkn<infty:beta<betam} If $\beta\le \beta_m$, then there is no equilibrium in $E_{k,m}.$
\end{enumerate}
\end{proposition}
\begin{proof}[Proof of Proposition \ref{propEkn<infty}-\eqref{propEkn<infty:beta>betam}]
Fix $k\ge 1$ and suppose $\beta>\beta_m$. 
Note that $v(a,b)\in E_{k,m}\cup \Sigma_{k+1,m}$ is an equilibrium if and only if  $x=f_\beta^{-1}\circ h_1(y)=h_2(y)$, where $h_1(y)=\frac{2[ky+(\ell-1)]}{P_1(y)}$ and 
\begin{eqnarray*}
h_2(y)&=&\frac{P_3(y)-y^\beta P_2(y)}{2g(y)}\;,
\end{eqnarray*}
where
$g(y)=(k-1)y^{\beta+1}+\ell y^\beta - ky - (\ell-1)$.
Note that $h_1:[0,\infty)\to\mathbb{R}$ is decreasing, with $h_1(1)=\beta_m$ and $\lim_{y\to\infty}h_1(y)=0$, and $h_2(y)$ is defined for all $y$ such that $g(y)\ne 0$.

Suppose that $v(a,b)$ is an equilibrium not isolated in the set of equilibriums in $E_{k,m}\cup \Sigma_{k+1,m}$.
Then, there is a sequence $(a_i,b_i)$ converging to $(a,b)$, such that $v(a_i,b_i)$ is an equilibrium.
Set $t_i=b_i^\alpha-y$ (with $y=b^\alpha$), then  $t_i$ converges to $0$ and 
$h_1(y+t_i)=f_\beta\circ h_2(y+t_i)$ for all $i$.

Note that necessarily, $h_1(y+t_i)<\beta$ and $h_2(y+t_i)>1$ for all $i$.
The functions $t\mapsto h_1(y+t)$ and $t\mapsto f_\beta\circ h_2(y+t)$ are both expandable in power series, with radius of convergence respectively $r_1$ and $r_2$. Note that $r_1=y$ and that 
$$r_2=\inf\{|t|:\; h_2(y+t)=1 \hbox{ or } g(y+t)=0\hbox{ or } |t|=y)\}.$$
Since $g(1)=0$, we have $r_2\le y-1$.
Since $h_1(y+t_i)=f_\beta\circ h_2(y+t_i)$ for all $i$, the two power series coincide, and we have that $h_1(y+t)=f_\beta\circ h_2(y+t)$ for all $t\in (-r_2,r_2)$. 
This implies that there is $y'\in\{y-r_2,y+r_2\}$ such that $g(y')=0$ and $h_1(y')=0$ or such that $h_2(y')=0$ and $h_1(y')=\beta$. Then since $y'\ge 1$, $0<h_1(y')< \beta_m$, which leads to a contradiction. 
This shows that every equilibrium in $E_{k,m}\cup \Sigma_{k+1,m}$ is isolated.

It can be checked that $\limsup_{y\to 1+} \frac{f_\beta^{-1}\circ h_1(y)}{h_2(y)}<0$ and $\limsup_{y\to \infty} \frac{f_\beta^{-1}\circ h_1(y)}{h_2(y)}<0$. 
Thus the set of equilibrium in  $E_{k,m}\cup \Sigma_{k+1,m}$ is included in a compact subset of  $E_{k,m}\cup \Sigma_{k+1,m}$. This permits to conclude that there is only a finite number of equilibriums in  $E_{k,m}\cup \Sigma_{k+1,m}$.
\end{proof}

\begin{proof}[Proof of Proposition \ref{propEkn<infty}-\eqref{propEkn<infty:beta<betam}]

\textbf{Case $\bm{k=1$}:} In this case, $m=\ell+2$ and $\ell\ge 2$.
Suppose that $v(a,b)\in E_{1,m}$ is an equilibrium. 
Then, using \eqref{def:K123}, we have $K_1=\ell [2y+\ell-1]$ and $K_2=\ell [2x+\ell-1]$.

Now, using \eqref{eq:K3-K1}, $x^\beta K_1=K_3$ is equivalent to $f_{\beta}(x)=f(y):=\frac{2[y+\ell-1]}{\ell [2y+\ell-1]}$.
Then $f$ is decreasing,
with $f(1)=\frac{2}{\ell+1}=\beta_m$ and $\lim_{y\to\infty}f(y)=\frac{1}{\ell}$.
Therefore, if $\beta\le \frac{1}{\ell}$, there is no equilibrium in $E_{1,m}.$

If $\ell^{-1}\le \beta\le \beta_m$, let $x_0>1$ be such that $f_\beta(x_0)=\ell^{-1}$ and define $\psi:[1,x_0)\to [1,\infty)$ by $\psi=f^{-1}\circ f_\beta$.
Note that $\psi$ is increasing, with $\psi(1)=f^{-1}(\beta)>1$ and $\lim_{x\to x_0}\psi(x)=\infty$.
We have that $x^\beta K_1=K_3$ is equivalent to $y=\psi(x)$ and $1<x<x_0$. 
We also have that $y^\beta K_2=K_3$ is equivalent to $x=\psi(y)$ and $1<y<x_0$. 

Note that $v(a,a)$ is an equilibrium in $\Sigma_{2,m}$ if and only if $\psi(x)=x$ and $1<x:=a^\alpha<x_0$.  
Proposition~\ref{prop:unstable} shows that when $\beta\le\beta_m$, there is no equilibrium in $\Sigma_{2,m}$. 
Thus, $\psi(x)\ne x$ for all $x\in (1,x_0)$ and, since $\psi(1)>1$, we have $\psi(x)>x$ for all $x\in (1,x_0)$.
Now, if $v(a,b)\in E_{1,m}$ is an equilibrium, then $y=\psi(x)>x=\psi(y)>y$, which is a contradiction.

\textbf{Case $\bm{k=2}$:} Note that $m=\ell+3$ and $\ell\ge 1$.
Suppose that $v(a,b)\in E_{2,m}$ is an equilibrium. 
Then, since $k=2$, we have $K_1=G(y,y)$ and $K_2=G(x,y)$, where
$$G(x,y)=2xy+2\ell(x+y)+\ell(\ell-1).$$ 
Note also that, using \eqref{eq:K3-K1} and \eqref{eq:K3-K2}, we have $K_3-K_1=2(x-1)[2y+\ell-1]$ and $K_3-K_2=2(y-1)[x+y+\ell-1]$.
Therefore, $x^\beta K_1=K_3$ is equivalent to $f_\beta(x)=\Phi(y,y)$ and $y^\beta K_2=K_3$ is equivalent to $f_\beta(y)=\Phi(x,y)$, where
$$\Phi(x,y)=\frac{2[x+y+\ell-1]}{2xy+2\ell(x+y)+\ell(\ell-1)}.$$
Note that $\Phi(x,y)=\Phi(y,x)$ and that $x\mapsto \Phi(x,y)$ is decreasing, with $\Phi(1,y)=\frac{2(y+\ell)}{(\ell+1)(2y+\ell)}$ and $\lim_{x\to\infty}\Phi(x,y)=\frac{1}{y+\ell}$. This implies that $y\mapsto \Phi(y,y)$ is also decreasing,
with $\Phi(1,1)=\frac{2}{\ell+2}=\beta_m$ and $\lim_{y\to\infty}\Phi(y,y)=0$.

Note that if for $x>1$, we have $\Phi(x,x)=f_\beta(x)$, then $v(a,a)$ (with $a=x^{\frac{1}{\alpha}}$), which (up to a permutation of indices) belongs to $\Sigma_{3,m}$, is an equilibrium.
Proposition~\ref{prop:unstable} states that there is exactly one such equilibrium when $\beta\le\beta_m$. 
So when $\beta\le\beta_m$, there is $x_*>1$ such that  $\Phi(x,x)>f_\beta(x)$ if $x<x_*$ and $\Phi(x,x)>f_\beta(x)$ if $x>x_*$.

Note that if for $y>1$, we have $\Phi(1,y)=f_\beta(y)$, then $v(1,b)$ (with $b=y^{\frac{1}{\alpha}}$), which (up to a permutation of indices) belongs to $\Sigma_{2,m}$, is an equilibrium.
Proposition~\ref{prop:unstable} states that there is no such equilibrium when $\beta\le\beta_m$. 
This implies that for all $y>1$, $\Phi(1,y)>f_\beta(y)$.

A first consequence of these facts is that there is an increasing  continuous function $\psi_1:[1,\infty)\to [1,\infty)$,  
such that if $f_\beta(x)=\Phi(y,y)$ then $y=\psi_1(x)$. Moreover, $\psi_1(1)>1$, $\psi_1(x)\sim 2x^{1-\beta}$ as $x\to\infty$ and
$x<\psi_1(x)<x_*$ if $1<x<x_*$ and $x_*<\psi_1(x)<x$ if $x>x_*$.

A second consequence is that for all $y\ge 1$ such that $(y+\ell)^{-1}<f_\beta(y)\le\Phi(1,y)$, there is a unique $x=\psi_2(y)\ge 1$ such that $f_\beta(y)=\Phi(x,y)$. 
Moreover, $\psi_2(y)>y$ if $1<y<x_*$ and $\psi_2(y)<y$ if $y>x_*$.

Therefore, if $v(a,b)\in E_{2,m}$ is an equilibrium, then  $\psi_1(x)=y$, $\psi_2(y)=x$ and $x\ne y$.
If $1<x<x_*$, then $y=\psi_1(x)\in (x,x_*)$ and thus $x=\psi_2(y)>y$, which is a contradiction.
If $x>x_*$, then $y=\psi_1(x)\in (x_*,x)$ and thus $x=\psi_2(y)<y$, which is again a contradiction.
The lemma is proved.

\textbf{Case $\bm{k\ge3}$:} By Lemma \ref{lem:Ekn}, we have that if $v\in E_{k,m}$ is an equilibrium, then $v\in \Sigma_{1,k+1,m}$.
Recall that $m=k+\ell+1$ and that $\beta_m=\frac{2}{m-1}=\frac{2}{k+\ell}$.
Let $v=v(a,b)\in \Sigma_{1,k+1,m}$. Set again $x=a^\alpha$ and $y=b^\alpha$. Then $x>y>1$.
If $v$ is an equilibrium, then $x^\beta K_1=y^\beta K_2$. Using \eqref{eq:K2-K1}, we thus have
$$\frac{x^\beta-y^\beta}{x-y}=\frac{2y^\beta[(k-1)y+\ell]}{K_1}.$$
When $x>y$, $\frac{x^\beta-y^\beta}{x-y}=f_\beta(x/y) y^{\beta-1} <\beta y^{\beta-1}$ 
and we thus have $f(y):=\frac{2y[(k-1)y+\ell]}{P_1(y)}<\beta$. 
Now, $y\mapsto f(y)$ is increasing on $[1,\infty)$, with $f(1)=\frac{2}{k+\ell}=\beta_m\ge \beta$, which gives a contradiction.
\end{proof}

\subsection{Proof of Theorem \ref{th:frrw:vn CV}}\label{prfthm}
In the previous sections, we have shown that there is a Lyapounov function,  that there is a finite number of equilibriums and characterized the stable equilibriums (see Proposition \ref{prop:Lambdafinite}). 
Therefore, to prove Theorem \ref{th:frrw:vn CV}, we will apply Corollary \ref{cor:lyapunov}, Theorem \ref{th:st_cv} and Theorem \ref{th:unst_non_cv}.

\subsubsection{Convergence towards stable equilibriums}

\begin{proposition}
\label{prop:frrw:equi_att} Uniform probability measures on subsets of $\Vset$ containing $m\ge 3$ vertices are attainable.
\end{proposition}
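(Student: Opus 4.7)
The plan is to exhibit, for any $\epsilon>0$ and any $n_0\in\Nset^*$, an event of positive probability on which $\|v_n-\mu_A\|<\epsilon$ for some $n\ge n_0$. Write $A=\{a_1,\dots,a_K\}$ and fix the cyclic order $a_1\to a_2\to\cdots\to a_K\to a_1$. The idea is to force the walk, after a short preparatory phase, to follow this cycle for enough full turns that the empirical measure comes close to $\mu_A$.

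For the first step, I would show that from any realization of the state $(X_{n_0-1},X_{n_0})$ one can exhibit a non-backtracking trajectory of length $L\le 3$ extending it and ending in the state $(a_K,a_1)$ at the deterministic time $n_1:=n_0+L$; since $G$ is the complete graph on $N\ge 4$ vertices and $K\ge 3$, this is immediate by a short case analysis. Because $v_m(j)\ge 1/(m+N)>0$ for every $m$ and every $j\in\Vset$, each legal prescribed step has strictly positive conditional probability, so the whole navigation event has positive conditional probability given $\mcf_{n_0}$.

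For the second step, for $M\in\Nset^*$ define $C_M=\{X_{n_1+j}=a_{(j\bmod K)+1}\text{ for all }0\le j\le MK\}$, the event that the walk follows the cycle for $M$ full turns after $n_1$. Directly from \eqref{def:vn}, on $C_M$ one has
\begin{align*}
v_{n_1+MK}(a_i)&=\frac{1+Z_{n_1}(a_i)+M}{n_1+MK+N}\xrightarrow[M\to\infty]{}\frac{1}{K}\;,\quad i\in\{1,\dots,K\},\\
v_{n_1+MK}(j)&=\frac{1+Z_{n_1}(j)}{n_1+MK+N}\xrightarrow[M\to\infty]{}0\;,\quad j\in\Vset\setminus A,
\end{align*}
so that for $M$ large enough (depending on $\epsilon$ and on $\mcf_{n_1}$), $C_M\subset\{\|v_{n_1+MK}-\mu_A\|<\epsilon\}$.

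The only remaining point is that $\Pr(C_M\mid\mcf_{n_1})>0$. At every step $m$ of the cyclic phase, since $\alpha\ge 1$ and $v_m\in\Delta_\Vset$, one has $\sum_k v_m(k)^\alpha\le\sum_k v_m(k)=1$, so the conditional probability of the correct cyclic move is bounded below by $v_m(a_{\mathrm{next}})^\alpha\ge (m+N)^{-\alpha}>0$. The product of these lower bounds over the $MK$ steps of $C_M$ is a strictly positive (though small) number, hence $\Pr(C_M\mid\mcf_{n_1})>0$. Combined with the first step and the fact that $n_1+MK\ge n_0$, this exhibits the required positive-probability event and proves the attainability of $\mu_A$. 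I do not anticipate any serious obstacle: the argument is essentially an explicit combinatorial construction of a positive-probability trajectory together with the trivial lower bound $v_m(j)\ge 1/(m+N)$.
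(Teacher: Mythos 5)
Your proof is correct and takes essentially the same approach as the paper's: force the walk to traverse $A$ cyclically and invoke the strict positivity of the transition probabilities $P(v_n)$ on $\supp(v_n)$. The paper's version is a bit leaner, since it simply prescribes the cyclic trajectory $1\to 2\to\cdots\to K\to 1\to\cdots$ from time $1$ onward, avoiding both the conditioning on $\mcf_{n_0}$ and the navigation phase. Two small slips in your write-up, both easily repaired: (i) the claim $L\le 3$ is wrong in the worst case $(X_{n_0-1},X_{n_0})=(a_1,a_K)$, where any non-backtracking path to $(a_K,a_1)$ must exit $a_K$, visit two other vertices, return to $a_K$ and then step to $a_1$, so $L=4$; the bound $L\le 4$ (or aiming for any oriented edge of the cycle, which gives $L\le 2$) suffices. (ii) As stated, $M$ depends on $\mcf_{n_1}$, so $C_M$ is not a fixed event; since $n_1\le n_0+4$ gives a uniform bound on $\max_j Z_{n_1}(j)$, one may choose $M$ deterministically, after which the argument is complete.
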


\begin{proof}
Let $v$ be a uniform measure on $A\subset\Vset$ with $m:=|A|\ge 3$. To prove that $v$ is attainable, we remark that with positive probability, the walk $X$ remains in $A$ and visits the $m$ vertices of $A$ uniformly and always in the same order. Let us write this more precisely.

Without loss of generality, we suppose that $A=\{1,\dots,m\}$. Let $(x_n)_{n\in\Nset}$ be a sequence of vertices, such that for all integer ${n\ge 1}$ and all vertex $i\in\{1,\dots,m\}$, $x_{nm+i}=i$. Denote $\Omega_n=\{\forall q\le nm, X_q=x_q\}$, the event, where during the $nm+1$ first steps $X$ stays on $\{1,\dots,m\}$ and visits the $m$ vertices always in the order $(1,2,\cdots, m,1,\cdots)$.
Note that for all $n\ge1$, $\Pr(\Omega_{n})>0$. Indeed,
\begin{eqnarray*}
\Pr(\Omega_{n})&=&P(v_0)(X_0,1) \prod_{q=0}^{n-1} \prod_{i=1}^{m-1} P(v_{mq+i})(i,i+1) \prod_{q=1}^{n-1}P(v_{mq})(i,1)\;.
\end{eqnarray*}
Since $G$ is a complete graph and $v_n(i)>0$ for all $n\ge1$ and $i\in\{1,\dots,m\}$, we have ${P(v_n)(i,j)>0}$, for all $n\ge1$ and all $i,j\in\{1,\dots,m\}$, such that $i\neq j$.

On the event $\Omega_{n}$, it holds that 
\begin{eqnarray*}
\norm{v_{nm}-v}&=&\max\left( \abs{\frac{n+1}{N+nm}-\frac{1}{m}},\frac{1}{N+nm}\right)
\;\le\; \frac{N}{m^2n}.
\end{eqnarray*}
Thus for all $\epsilon >0$ and $n_0\in\Nset$, there exits $n_1\ge n_0/m$, such that on $\Omega_{n_1}$, ${\norm{v_{n_1m}-v} < \epsilon}$. Therefore,
${\Pr(\exists n\ge n_0,\norm{v_{n}-v}< \epsilon)\ge \Pr(\Omega_{n_1}) >0}.$
\end{proof}

Theorem \ref{th:st_cv} implies the following statements: 
when $\alpha=1$, $v_n$ has a positive probability to converge towards the uniform probability measure on $\Vset$ 
(see Proposition~\ref{prop:frrw:eq,a=1}). When $\alpha>1$, $v_n$ has a positive probability to converge towards a uniform probability measure on a set containing less than $\frac{3\alpha-1}{\alpha-1}$ vertices (see Proposition~\ref{prop:frrw:st_unif_eq,a>1}).

\subsubsection{Localization on the supports of stable equilibrium}

Following~\cite{Benaim2013}, we prove that for $v$ a stable equilibrium, on the event $\{\lim_{n\to\infty} v_n = v\}$,  the walk $X_n$ localizes almost surely on $\supp(v)$, i.e. the set of infinitely often visited vertices by $X_n$ is $\supp(v)$.

\begin{proposition}
Let $v$ be a stable equilibrium, then  on the event ${\{\lim_{n\to\infty} v_n = v\}}$, the set $\Vset\setminus\supp(v)$ is visited almost surely only finitely many times.
\end{proposition}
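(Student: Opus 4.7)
Note first that when $\alpha = 1$, Proposition~\ref{prop:frrw:st_eq,a=1} forces $\supp(v) = \Vset$, so there is nothing to prove; henceforth assume $\alpha > 1$. By Proposition~\ref{prop:frrw:st_unif_eq,a>1}, $v = \mu_A$ with $K := |A| \ge 3$. Fix $i \in \Vset \setminus A$; the aim is to show $Z_\infty(i) < \infty$ almost surely on $\{v_n \to v\}$.

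The first step is to localize via a stopping time. For $\delta > 0$ small, set $\sigma_{n_0} := \inf\{n \ge n_0 : v_n \notin \mcb(v, \delta)\}$. Since $\{v_n \to v\}$ is contained in the increasing union $\bigcup_{n_0}\{\sigma_{n_0} = \infty\}$, it suffices to prove that $\Pr(\sigma_{n_0} = \infty,\ Z_\infty(i) = \infty) \to 0$ as $n_0 \to \infty$. On $\{n_0 \le n < \sigma_{n_0}\}$, every $j \in A$ satisfies $v_n(j) \ge 1/K - \delta$, hence $Z_n(j) \ge (1/K - \delta)(n+N) - 1$; using formula~\eqref{def:frrw:K} and the fact that the denominator sum retains at least $K - 2 \ge 1$ vertices of $A$ after removing $X_n$ and $X_{n-1}$, I would derive a constant $C = C(K, \delta, \alpha)$ such that
$$p_n := \Pr(X_{n+1} = i \mid \mcf_n) \le \frac{C\,(1+Z_n(i))^\alpha}{n^\alpha}.$$

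The second ingredient is the strong-reinforcement identity $\sum_{k\ge 0}(1+k)^{-\alpha} < \infty$, which holds precisely because $\alpha > 1$. Introduce the bounded non-increasing process
$$M_n := \sum_{k \ge Z_n(i)} (1+k)^{-\alpha},$$
so that $\{Z_\infty(i) < \infty\} = \{M_\infty > 0\}$ and the telescoping identity $M_n - M_{n+1} = \mathbf{1}_{X_{n+1}=i}\,(1+Z_n(i))^{-\alpha}$ holds. Multiplying this by the bound on $p_n$ cancels the factor $(1+Z_n(i))^\alpha$, yielding $\mathbf{1}_{n < \sigma_{n_0}}\Esp[M_n - M_{n+1} \mid \mcf_n] \le C/n^\alpha$. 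Summing over $n \ge n_0$ and applying optional stopping to the supermartingale $(M_{n \wedge \sigma_{n_0}})_n$ then gives
$$\Esp\bigl[(M_{n_0} - M_\infty)\,\mathbf{1}_{\sigma_{n_0} = \infty} \mid \mcf_{n_0}\bigr] \le \frac{C'}{n_0^{\alpha - 1}}.$$

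It remains to convert this $L^1$ decay into the pathwise positivity of $M_\infty$ on $\{v_n \to v\}$. Applying conditional Markov with $\varepsilon = M_{n_0}/2$ (which is $\mcf_{n_0}$-measurable and positive) shows that $\Pr(M_\infty = 0, \sigma_{n_0} = \infty \mid \mcf_{n_0}) \le 2 C'/(n_0^{\alpha-1} M_{n_0})$; combining with the integral comparison $M_{n_0} \ge c_0(1+Z_{n_0}(i))^{-(\alpha-1)}$ bounds this by $C_1(1+Z_{n_0}(i))^{\alpha-1}/n_0^{\alpha-1} = C_1 (v_{n_0}(i) + o(1))^{\alpha-1}$. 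Since $v_n(i) \to v(i) = 0$ on $\{v_n \to v\}$ and the ratio $(1+Z_{n_0}(i))/n_0$ is uniformly a.s.\ bounded by Lemma~\ref{lem:v<=1/3}, dominated convergence delivers $\Pr(v_n \to v,\ M_\infty = 0) = 0$. The principal obstacle in executing this plan rigorously is the interplay between the future-dependent event $\{\sigma_{n_0} = \infty\}$ and the $\mcf_{n_0}$-conditioning: one must work with the truncated supermartingale $(M_{n \wedge \sigma_{n_0}})$ and approximate $\{v_n \to v\}$ from within by $\mcf_{n_0}$-measurable events capturing $v_{n_0}(i)$ being small, rather than conditioning naively on the tail event itself.
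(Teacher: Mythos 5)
Your approach is genuinely different from the paper's, and to my mind more self-contained. The paper reduces the proposition to two lemmas (Lemmas~\ref{lem:localization1} and~\ref{lem:localization2}): first a polynomial rate of convergence $n^\nu\|v_n-v\|\to 0$ on the event $\{v_n\to v\}$, then a general statement that polynomial decay of $v_n(i)$ forces finitely many visits to $i$; neither lemma is proved in the paper, which instead refers the reader to the analogous Lemmas~3.13 and~3.14 in~\cite{Benaim2013}. You bypass the polynomial rate entirely by observing that, inside $\mcb(v,\delta)$, the jump probability to $i\notin\supp(v)$ is $\lesssim (1+Z_n(i))^\alpha/n^\alpha$, and by introducing the tail-sum process $M_n=\sum_{k\ge Z_n(i)}(1+k)^{-\alpha}$ whose single-step decrease $\mathbf{1}_{X_{n+1}=i}(1+Z_n(i))^{-\alpha}$ exactly cancels the numerator, giving the clean estimate $\mathbf{1}_{n<\sigma_{n_0}}\Esp[M_n-M_{n+1}\mid\mcf_n]\le C/n^\alpha$. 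Since $\alpha>1$, summation yields the $\mcf_{n_0}$-conditional bound of order $n_0^{-(\alpha-1)}$, and the integral comparison $M_{n_0}\gtrsim (1+Z_{n_0}(i))^{-(\alpha-1)}$ turns it into a bound of order $v_{n_0}(i)^{\alpha-1}$. This buys a direct, elementary argument, not relying on the stochastic-approximation machinery behind the rate lemma of~\cite{Benaim2013}; the trade-off is that the argument is tailored to the power-law weight $W(k)=(1+k)^\alpha$, whereas the paper's Lemma~\ref{lem:localization2} is stated for a general polynomial decay exponent.

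One step you flag as delicate does need the repair you sketch but do not execute: the naive ``dominated convergence'' on $\Pr(M_\infty=0,\sigma_{n_0}=\infty\mid\mcf_{n_0})\,\mathbf{1}_{v_n\to v}$ is not licit because $\{v_n\to v\}$ is a tail event, not $\mcf_{n_0}$-measurable. The correct fix is the double limit you allude to: fix $\epsilon>0$, introduce the $\mcf_{n_0}$-measurable event $A_{n_0}=\{v_{n_0}\in\mcb(v,\delta),\ (1+Z_{n_0}(i))/n_0<\epsilon\}$, and write
\begin{equation*}
\Pr(v_n\to v,\ M_\infty=0)\ \le\ \Esp\bigl[\mathbf{1}_{A_{n_0}}\Pr(M_\infty=0,\sigma_{n_0}=\infty\mid\mcf_{n_0})\bigr]\ +\ \Pr(v_n\to v,\ \sigma_{n_0}<\infty)\ +\ \Pr(v_n\to v,\ A_{n_0}^c).
\end{equation*}
The first term is $\le C_1\epsilon^{\alpha-1}$ uniformly in $n_0$ by your conditional-Markov bound; the second and third tend to $0$ as $n_0\to\infty$ for fixed $\epsilon$ (the last because $v_{n_0}(i)\to 0$ on $\{v_n\to v\}$); letting $n_0\to\infty$ and then $\epsilon\to 0$ gives the conclusion. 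With this repair the argument is complete and correct.
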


This proposition is a consequence of the two following lemmas:

\begin{lemma}
\label{lem:localization1}
There exists $\nu>0$ such that, a.s. on the event $\{\lim_{n\to\infty} v_n = v\}$, $$\lim_{n\to\infty} n^\nu \norm{v_n-v} =0.$$
\end{lemma}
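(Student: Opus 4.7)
The plan is to prove a classical rate-of-convergence estimate for the stochastic algorithm near a stable equilibrium, in the spirit of Theorem~6.10 of~\cite{Benaim1999} or of the Robbins--Siegmund type results of~\cite{Duflo1996}. Set $u_n := v_n - v$ and work in the tangent space $T_0\Delta_\Vset$. From \eqref{eq:v(n+1)_v(n)} together with the decomposition $U_{n+1}=\epsilon_{n+1}+r_{n+1}$ of \eqref{def:U_n}, and the Taylor expansion $F(v_n)=A u_n+R(u_n)$ with $A=DF(v)$ and $\|R(u_n)\|=O(\|u_n\|^2)$ (justified by the $\mathcal{C}^1$ regularity of $\pi^V$ on $\Sigma$), I would write
\[
u_{n+1} = \left(I+\tfrac{1}{n+d+1}A\right)u_n+\tfrac{1}{n+d+1}\big(R(u_n)+\epsilon_{n+1}+r_{n+1}\big).
\]

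Since $v$ is stable, all eigenvalues of $A$ have real part $\leq-\lambda$ for some $\lambda>0$. Solving the Lyapunov equation $A^{*}M+MA=-I$ on the invariant subspace carrying the negative spectrum yields a positive definite quadratic form $L(u)=\langle u,Mu\rangle$ such that $\langle\nabla L(u),Au\rangle\leq-2c\,L(u)$ for some $c>0$ (one can take $c$ arbitrarily close to $\lambda/\|M\|_{op}$). I would then compute the conditional expectation of $L(u_{n+1})$ given $\mathcal{F}_n$. Using the martingale property $\mathbb{E}[\epsilon_{n+1}\mid\mathcal{F}_n]=0$, the bound $\|\epsilon_{n+1}\|\leq C$ from \eqref{eq:QV}, the estimate $\|r_{n+1}\|=O(1/n)$ obtained in Section~\ref{sec:non_conv} (Hypothesis \ref{hyp:unstb_eq}-\eqref{hyp:unstb_eq:ball} is automatic here since $v\mapsto Q(v)V(e)$ is Lipschitz on a neighbourhood of the stable equilibrium by Hypothesis \ref{hyp:th}-\eqref{hyp:th:Q} and the $\mathcal{C}^1$ regularity), and the quadratic nature of $R(u_n)$, one gets on $\{v_n\to v\}$
\[
\mathbb{E}\!\left[L(u_{n+1})\mid\mathcal{F}_n\right]\leq\left(1-\tfrac{2c}{n+d+1}+o(\tfrac{1}{n})\right)L(u_n)+\tfrac{C'}{(n+d+1)^{2}}.
\]

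An application of the Robbins--Siegmund lemma (Theorem~1.III.12 of~\cite{Duflo1996}) to $n^{2c'}L(u_n)$ for any $c'<c$ shows that this sequence stays bounded a.s.\ on $\{v_n\to v\}$, hence $L(u_n)=O(n^{-2c'})$ and $\|u_n\|=O(n^{-c'})$ a.s.\ on this event. Picking any $\nu<c'$ then yields $n^{\nu}\|u_n\|\to0$ a.s.\ on $\{v_n\to v\}$.

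The main technical obstacle is the restriction to the event $\{v_n\to v\}$: the usual Robbins--Siegmund statement requires global almost sure bounds, so one must localize via stopping times $\tau_k := \inf\{n\geq k : \|u_n\|>\delta\}$ with $\delta$ small enough that the Lyapunov contraction is effective on $\{\|u\|\leq\delta\}$, then apply Robbins--Siegmund to the stopped process. On $\{v_n\to v\}$ one has $\tau_k=\infty$ for $k$ large enough, which transfers the a.s.\ conclusion to that event. The remaining book-keeping (absorbing the $o(1/n)$ term into the contraction factor and checking the summability conditions) is routine.
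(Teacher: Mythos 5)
Your overall strategy---linearize around the stable equilibrium, build a quadratic Lyapunov function from the negative spectrum of $DF(v)$, and run a Robbins--Siegmund argument on a stopped process---is the right one, and it matches the spirit of the reference the paper points to (the paper gives no proof of its own, delegating to Lemma~3.13 of \cite{Benaim2013}). However, there is one concrete gap: you assert that $\|r_{n+1}\|=O(1/n)$, citing Section~\ref{sec:non_conv}. What is actually proved there is that $\tilde r_{n+1}=F(v_n)-F(z_n)+r_{n+1,1}+r_{n+1,3}=O(1/n)$, i.e.\ the residual obtained \emph{after} replacing $v_n$ by $z_n=v_n-\frac{1}{n+d}P_nQ_nV(E_n)$. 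The raw $r_{n+1}$ contains $r_{n+1,2}=\frac{n+1}{n}P_nQ_nV(E_n)-P_{n+1}Q_{n+1}V(E_{n+1})$, which is only $O(1)$ pointwise; it becomes small only under a telescoping sum, which is exactly how the proof of \eqref{eq:lem_th} uses it. Plugging an $O(1)$ perturbation into the conditional-expectation estimate for $L(u_{n+1})$ ruins the $O(1/n^2)$ additive term and the Robbins--Siegmund argument no longer closes. The repair is precisely the device of Section~\ref{sec:non_conv}: run the whole recursion for $z_n$ (so the residual is $\tilde r_{n+1}=O(1/n)$), then transfer the rate to $v_n$ using $\|z_n-v_n\|=O(1/n)$.

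Two smaller points. First, since $\pi^V$ is only assumed $\mathcal C^1$ (and for the VRNBW application is genuinely not $\mathcal C^2$ at the boundary when $1<\alpha<2$), the Taylor remainder is $R(u_n)=o(\|u_n\|)$, not $O(\|u_n\|^2)$; this is still enough once you localize to $\|u_n\|\le\delta$ with $\delta$ small, so that $\|R(u_n)\|\le\varepsilon\|u_n\|$ with $\varepsilon$ as small as you like, but you should state it that way. Second, the exponent in your Robbins--Siegmund step must also satisfy $c'<1/2$: the martingale contributes an additive term of order $n^{2c'-2}$, which is summable only when $c'<1/2$. The constraint is therefore $c'<\min(c,1/2)$; since the lemma only asks for the existence of some $\nu>0$, this does not affect the conclusion, but the statement ``for any $c'<c$'' is not accurate.
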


\begin{lemma}
\label{lem:localization2}
For any $I\subseteq \Vset$ and $\nu\in(0,1)$, a.s. on the event
$$E_\nu := \{\lim_{n\to\infty} v_n(i)n^\nu =0,\forall i\in I\}\;,$$
the set $I$  is visited only finitely many times.
\end{lemma}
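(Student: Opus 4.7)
The plan is to prove, for each $i\in I$ separately, that a.s.\ on $E_\nu$, $Z_\infty(i):=\lim_n Z_n(i)<\infty$, where $Z_n(i)=\sum_{k\le n}\Ind_{X_k=i}$. Let $p_n:=\Pr(X_{n+1}=i\mid\mcf_n)$. Since the walk is non-backtracking, $p_n=0$ when $i\in\{X_{n-1},X_n\}$, and otherwise \eqref{def:frrw:K} gives
$$p_n=\frac{v_n(i)^\alpha}{\sum_{k\notin\{X_{n-1},X_n\}}v_n(k)^\alpha}.$$
First I would lower-bound the denominator via Lemma~\ref{lem:v<=1/3}: $\max(v_n)\le 1/3+o(1)$ yields $\sum_{k\notin\{X_{n-1},X_n\}}v_n(k)\ge 1/3-o(1)$ for $n$ large, whence $\sum_{k\notin\{X_{n-1},X_n\}}v_n(k)^\alpha\ge((1/3-o(1))/(N-2))^\alpha$ by the trivial bound $\sum x_k^\alpha\ge(\sum x_k/K)^\alpha$ on $K=N-2\ge 2$ nonnegative summands with $\alpha\ge 1$. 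Consequently there is $C_0>0$ such that $p_n\le C_0\,v_n(i)^\alpha$ eventually.

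Next, the compensated counting process $M_n:=Z_n(i)-\sum_{k<n}p_k$ is an $(\mcf_n)$-martingale with increments bounded by $1$. By L\'evy's conditional Borel--Cantelli lemma (see e.g.\ \cite{Duflo1996}), $\{\sum_k p_k<\infty\}\subseteq\{Z_\infty(i)<\infty\}$ a.s., and on $\{\sum_k p_k=\infty\}$ the ratio $Z_n(i)/\sum_{k<n}p_k\to 1$ a.s. It therefore suffices to prove that $\sum_k p_k<\infty$ almost surely on~$E_\nu$.

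The key ingredient, and the main obstacle, is a bootstrap in the decay exponent. On $E_\nu$ one has $v_n(i)=o(n^{-\nu})$, so $p_n\le C_0 v_n(i)^\alpha=o(n^{-\nu\alpha})$; if $\nu\alpha>1$ summability is immediate. Otherwise I argue by contradiction: assume $Z_\infty(i)=\infty$ on a subset of $E_\nu$ of positive probability. The L\'evy ratio then yields on that subset
$$Z_n(i)\;\sim\;\sum_{k<n}p_k\;\le\;C_0\sum_{k<n}v_k(i)^\alpha\;=\;o\!\left(\sum_{k<n}k^{-\nu\alpha}\right)\;=\;o\bigl(n^{1-\nu\alpha}\bigr),$$
hence $v_n(i)=(1+Z_n(i))/(n+N)=o(n^{-\nu\alpha})$, which strictly improves the initial decay. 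Iterating this self-consistent improvement replaces the exponent $\nu$ successively by $\nu\alpha,\nu\alpha^2,\dots,\nu\alpha^m$; because $\alpha>1$, after finitely many iterations $\nu\alpha^m>1$, at which point the first case applies and forces $\sum_k p_k<\infty$, contradicting $Z_\infty(i)=\infty$. (For $\alpha=1$ the lemma is invoked in the proof of Theorem~\ref{th:frrw:vn CV} only with $I=\Vset\setminus\supp(v_\infty)=\emptyset$ thanks to Theorem~\ref{th:frrw:vn CV}\eqref{th:frrw:vn CV:a=1}, so the conclusion is vacuous.)

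The obstacle is precisely this bootstrap: a one-shot Borel--Cantelli estimate gives summability of $\sum p_n$ only when $\nu\alpha>1$, whereas the statement requires arbitrary $\nu\in(0,1)$ (and Lemma~\ref{lem:localization1} produces no quantitative value of $\nu$ beyond positivity). Lifting the polynomial exponent from $\nu$ to $\nu\alpha^m$ by re-injecting the L\'evy ratio into the bound $p_n\le C_0 v_n(i)^\alpha$ is what makes strict reinforcement $\alpha>1$ essential in this argument.
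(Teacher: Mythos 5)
Your proof is correct and follows the same strategy the paper points to in~\cite{Benaim2013} (Lemma~3.14 there): bound the conditional probability of revisiting $i$ by $C_0 v_n(i)^\alpha$, apply L\'evy's conditional Borel--Cantelli to compare $Z_n(i)$ with $\sum_{k<n}p_k$, and bootstrap the polynomial decay exponent from $\nu$ to $\nu\alpha^m$ until it exceeds $1$. Two small points to tidy: when some $\nu\alpha^m=1$ the estimate $\sum_{k<n}p_k=o(n^{1-\nu\alpha^m})$ degenerates, but this is harmless --- either note that $Z_n(i)=o(\log n)$ still yields $v_n(i)^\alpha=o((\log n)^\alpha n^{-\alpha})$, summable since $\alpha>1$, or simply shrink $\nu$ slightly, which only enlarges $E_\nu$; and for $\alpha=1$ your dismissal is circular as phrased (you cannot invoke Theorem~\ref{th:frrw:vn CV}\eqref{th:frrw:vn CV:a=1} whose proof uses this lemma), but the substance is sound since Corollary~\ref{cor:lyapunov}, Propositions~\ref{prop:frrw:eq,a=1} and \ref{prop:frrw:st_eq,a=1}, and Theorem~\ref{th:unst_non_cv} already give $v_n\to\mu_\Vset$ a.s.\ without this lemma, so $E_\nu$ is a.s.\ empty whenever $I\neq\emptyset$.
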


We do not give the proofs of the two previous lemmas here. They can be proved following the lines of the proofs of Lemma~3.13. and Lemma~3.14 of \cite{Benaim2013}.
 
\subsubsection{Non convergence towards an unstable equilibrium}
Let $v_*$ be an unstable equilibrium. Then $v_*\not\in\Sigma_3$.
In this section, it is shown that a.s., $v_n$ does not converge towards $v^*$.
Let $f$ an unstable direction of $v_*$. 
Since $P:\Sigma\setminus\Sigma^3\to \mcm_{ind}$ is $\mcc^1$, then Hypothesis~\ref{hyp:unstb_eq}-\eqref{hyp:unstb_eq:ball} holds for ${v_*}$.

Recall the definitions of $\mca$, $\mca_i$, $\mca_{i,j}$ and $\mcr_*$ given in Section~\ref{sec:non_conv},  that $\pi_1$ and $\pi_2$ denote the marginals of $\pi(v_*)$ and  that, $v_*$ being an equilibrium, $\pi^V(v_*)=v_*$.

\begin{remark}\label{rem:pa*}
The way $P(v_*)$ is defined implies that $\mca_{i,j}=\mca\setminus\{i,j\}$, $\mca_{i}=\mca\setminus\{i\}$ and $\mcr_*=\mca\times\mca$, for all $(i,j)\in\OEset$.
\end{remark}

By using Remark \ref{rem:pa*} and the fact that $|\supp(v_*)|\ge 4$,  Hypothesis~\ref{hyp:unstb_eq}-\eqref{hyp:unstb_eq:A_y}  holds.

\begin{lemma}
Hypothesis~\ref{hyp:unstb_eq}-\eqref{hyp:unstb_eq:G_g} holds.
\end{lemma}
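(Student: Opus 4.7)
The plan is to choose an unstable direction $f$ of $v_*$ that lies in the subspace $\mch:=\Span\{e_i-e_j:i,j\in\mca\}$, and then show that the failure of Hypothesis~\ref{hyp:unstb_eq}-\eqref{hyp:unstb_eq:G_g} would force $f$ to be constant on $\mca$, incompatible with $f\in\mch\setminus\{0\}$.

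First, I would exhibit $f\in\mch$. Equation~\eqref{eq:frrw:Dx-v,a>1} of Lemma~\ref{lem:frrw:Dx-Dy,a>1} shows that the complementary subspace $\mcw:=\Span\{e_i-v_*:i\notin\mca\}$ is the $(-1)$-eigenspace of $DF(v_*)$; together with~\eqref{eq:frrw:Dx-Dy,a>1}, $DF(v_*)$ preserves the decomposition $T_0\Delta_{\Vset}=\mch\oplus\mcw$, so every unstable direction must lie in $\mch$. When $v_*=\mu_K$ is an unstable uniform equilibrium, Proposition~\ref{prop:frrw:st_unif_eq,a>1} gives $K\ge 4$ and identifies all of $\mch$ as the unstable eigenspace. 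When $v_*$ is a non-uniform equilibrium $(1-p)\mu_K+p\mu_M$ with $M\ge 4$, the proof of Proposition~\ref{prop:frrw:st_no_unif_eq,a>1} produces an unstable direction either of the form $\mu_M-\mu_K$ or of the form $e_i-e_j$ with $i,j\in\{K+1,\dots,M\}\subset\mca$; both lie in $\mch$. Thus in every case $|\mca|\ge 4$, and we may pick $f\in\mch$, so that $f$ is supported on $\mca$ and $\sum_{j\in\mca} f(j)=0$.

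Next I would argue by contradiction. Suppose there exist $C\in\Rset$ and $g:\mca\to\Rset$ with $Vf(i,j)=C+g(i)-g(j)$ for all $(i,j)\in\mcr_*$. By Remark~\ref{rem:pa*}, $\mcr_*=\{(i,j):i,j\in\mca,\,i\neq j\}$, and since $V((i,j),k)=\Ind_{j=k}$, the left-hand side equals $f(j)$. Hence
\[
f(j)=C+g(i)-g(j)\qquad\text{for every pair of distinct }i,j\in\mca.
\]
Fixing $j\in\mca$ and letting $i$ vary over $\mca\setminus\{j\}$ (at least three elements) forces $g$ to be constant on $\mca\setminus\{j\}$; since $|\mca|\ge 4$, any two such sets overlap, so $g\equiv g_0$ is constant on all of $\mca$. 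Substituting back gives $f(j)=C$ for every $j\in\mca$. Combined with $\sum_{j\in\mca}f(j)=0$, this yields $f\equiv 0$ on $\mca$, hence $f=0$, contradicting that $f$ is a nonzero eigenvector.

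The only real obstacle is the selection step: one must verify that the unstable eigenspace of every unstable equilibrium meets $\mch$, which relies on the explicit spectral analysis of $DF(v_*)$ carried out earlier. Once that choice of $f$ is made, the algebraic manipulation yielding the contradiction is immediate.
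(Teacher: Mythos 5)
Your proof is correct and follows the same core algebraic argument as the paper's: by Remark~\ref{rem:pa*} the identity $Vf(i,j)=f(j)$ on $\mcr_*=\{(i,j):i,j\in\mca,\,i\neq j\}$ forces $g$ constant on $\mca$ (using $|\mca|\ge 4$), hence $f$ constant on $\mca$, hence $f\equiv 0$ on $\mca$ by $\sum_\Vset f=0$, a contradiction. You are in fact a bit more careful than the paper: the last step requires knowing that $f$ is supported on $\mca$, i.e.\ that the unstable direction $f$ lies in $\mch=\Span\{e_i-e_j:i,j\in\mca\}$, which you justify explicitly from the invariant decomposition $T_0\Delta_\Vset=\mch\oplus\mcw$ furnished by Lemma~\ref{lem:frrw:Dx-Dy,a>1}, whereas the paper leaves this implicit.
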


\begin{proof}
Suppose that there exist a constant $C$ and a map $g:\mca\to\Rset$ such that 
\begin{eqnarray*}
Vf(i,j)&=&C+g(i)-g(j),\; \hbox{ for all $(i,j)\in\mcr_*$.}
\end{eqnarray*}
Calculate for $i,j\in\mca$,
$$Vf(i,j)\;=\;\sum_{k} V((i,j),k) f(k)
\;=\;\sum_{k} \delta_j(k) f(k)\;=\;f(j).$$
Thus for all $i,j\in\mca$, 
$f(j)=C+g(i)-g(j)$.
This implies that $g$ is constant on $\mca$ and thus that $f$ is constant. Since $f\in T_0\Delta_\Vset$,  $\sum_{i\in\Vset} f(i)=0$. Therefore $f(i)=0$ for all $i\in\mca$, which is impossible.
\end{proof}

This last lemma achieves the proof of Theorem~\ref{th:frrw:vn CV}. Indeed, Hypotheses~\ref{hyp:unstb_eq} are satisfied and Theorem~\ref{th:unst_non_cv} can be applied.

\section*{Appendix: Proof of Lemma \ref{lem:phi}}

It is easy to check that $\lim_{x\to 0+}\phi(x)=\beta_m$ and that 
$\lim_{x\to\infty}\phi(x)=\left\{\begin{array}{ll} 1 & \hbox{if } k=1\\ +\infty & \hbox{if } k=2\\ 0& \hbox{if }k\ge 3\end{array}\right.$.

\textbf{Case $\bm{k=1}$:}
In this case, $\phi(x)=\frac{\log(1+\beta_m x)}{\log(1+x)}$. We have
\begin{eqnarray*}
\phi'(x)=\frac{\beta_m\psi(x)}{(1+x)(1+\beta_mx)(\log(1+x))^2}\;,
\end{eqnarray*}
where $\psi(x):=(1+x){\log(1+x)}-\frac{1+\beta_mx}{\beta_m}\log(1+\beta_m x)$. We have $\psi'(x)={\log(1+x)}-\log(1+\beta_mx)$ and $\psi(0)=0$.
Since $\beta_m\le 2/3<1$, $\phi$ is increasing.

\textbf{Preliminary for case $\bm{k\ge 2}$:}
Suppose now $k\ge 2$. Set $s=(m-2)^{-1}$ and $t=(k-1)^{-1}$. Then $0<s\le t\le 1$.
Set $\Phi(y)=\phi(y/b_1)$. Then
\begin{eqnarray}
\label{def:phi(y)}
\beta=\Phi(y):=\frac{\log P(y) - \log Q(y)}{\log(1+\lambda y)}\;,
\end{eqnarray} where $\lambda=(s^{-1}+1)t=1/b_1$ and
\begin{eqnarray*}
P(y)&=&1+2(1+t)y+(1+t)(1+s)y^2\;,\\
Q(y)&=&1+2y+(1-t)(1+s)y^2.
\end{eqnarray*}
Set $u(y) = \log P(y) - \log Q(y)$. Since $0\le s\le t\le 1$,  $u(y)>0$ for all $y>0$. 
Moreover
$$\Phi'(y)=\frac{v(y)}{(1+\lambda y)(\log(1+\lambda y))^2}\;,$$
where $v(y)=(1+\lambda y)\log(1+\lambda y) u'(y) - \lambda u(y)$.
Note that
$$v'(y)=\log(1+\lambda y)  [(1+\lambda y)u']'(y).$$
Therefore $v'(y)>0$ if and only if $\frac{d}{dy}\big[(1+\lambda y)u'(y)\big]^{-1} < 0.$
Set
$$R(y)=1+2(1+s)y+(1+s)(1+t)y^2.$$
Then $u'(y)=2t\frac{R}{PQ}(y)$ and thus $v'(y)>0$ if and only if $q(y)<0$, where
$$q(y)=(1+\lambda y)^2\frac{d}{dy} \frac{PQ(y)}{(1+\lambda y)R(y)}.$$

Set $P_0=P-R$ and $Q_0=R-Q$, then 
\begin{eqnarray*}
P_0(y) &=&  2(t-s)y\;,\\
Q_0(y) &=& 2sy+2(1+s)t y^2\,=\,2sy(1+\lambda y).
\end{eqnarray*}
Since $PQ=PR-PQ_0=PR-RQ_0-P_0Q_0$, we get $q(y)=q_1(y)-q_2(y)$, where
\begin{eqnarray*}
q_1(y) &= & (1+\lambda y)^2\frac{d}{dy} \frac{(P-Q_0)(y)}{(1+\lambda y)}\;,\\
q_2(y) &=& (1+\lambda y)^2 \frac{d}{dy} \frac{P_0Q_0(y)}{(1+\lambda y)R(y)}\;.
\end{eqnarray*}

Computing $q_1$ and $q_2$ gives\,:
\begin{eqnarray*}
q_1(y)&=& (2-t/s)(1-s)+(1-t)(1+s)y(2+\lambda y)\;,\\
q_2(y) &=& 8(t-s)s\times\frac{1+(1+s)y}{R^2(y)}\times y (1+\lambda y)^2\;.
\end{eqnarray*}

\textbf{Case $\bm{k=2}$:}
In this case, we have $t=1$ and $s\le 1/2$. Therefore, $q_1(y)=(2-1/s)(1-s)\le 0$. Since $q_2(y)>0$, we get $q(y)<0$.
This shows that ${v'(y)>0}$ for all $y>0$. 
Since $v(0)=0$, $v(y)>0$ for all $y>0$ and therefore $\Phi$ (and hence $\phi$) is increasing on $(0,\infty)$.

\textbf{Case $\bm{k\ge3}$:}
We now suppose $k\ge 3$. 

\blem \label{lem:qconvexe} For all $0<s<t\le 1/2$,  $q$ is a strictly convex function on $(0, \infty)$. \elem
This lemma is proved at the end of this section.

We have that $q'_1(0)=2(1-t)(1+s)$, $q'_2(0)=8(t-s)s$, and thus $q'(0)=2(1-t)(1+s)-8(t-s)s$.  
Since $0<s<t\le 1/2$, $q'(0)>0$ and $q$ is increasing on $\Rset^+$.
Note that $q(0)=(2-t/s)(1-s)$. 

When $k\ge m/2$, then $s\ge t/2$ and $q(0)\ge 0$. We thus have $q(y)>0$ for all $y>0$ and as a consequence  $v'(y)<0$ for all $y>0$. Since $v(0)=0$, this proves that $\Phi'(y)<0$ for all $y>0$. And Lemma \ref{lem:phi} is proved in this case.

When $k< m/2$,  then $s< t/2$ and $q(0)< 0$. Since $q$ is convex and since $\lim_{y\to\infty} q(y)=+\infty$, there is $y_0$ such that $q(y)<0$ if $y<y_0$ and $q(y)>0$ if $y>y_0$. 
Thus $v$ is increasing on $(0,y_0)$ and  decreasing on $(y_0,\infty)$. 
Since $v(0)=0$ and $\lim_{y\to\infty} v(y)<0$, there exists $y_1>y_0$ such that $v(y)>0$ on $(0,y_1)$ and $v(y)<0$ on $(y_1,\infty)$. 
And Lemma \ref{lem:phi} is also proved in this case.

\begin{proof}[Proof of Lemma~\ref{lem:qconvexe}] Firstly, we have for all $y>0$,
$$q''_1(y)=2(1-t)(1+s)^2(t/s)>0.$$

We now upperbound $q''_2(y)$ for all $y>0$.
Set $z=(1+s)y$, $c=t/s$, ${d=(1+t)/(1+s)}$, $D(z)=1+2z+dz^2$ and
$$Q(z)=\frac{z(1+z)(1+cz)^2}{D^2(z)}.$$

Then $q_2(y)=\frac{8(t-s)s}{1+s}\times Q((1+s)y)$ and $q''_2(y)=8(t-s)s(1+s)Q''(z)$. 
Set $L(z)=4(t-s)s^2(1+s)^{-1}Q''(z) D^4(z)$. Then $q''_2(y) < q''_1(y)$ for all $y>0$ as soon as $\frac{q''_2(y)}{q''_1(y)}<1$, then as soon as  $L(z)< t(1-t)D^4(z)$ for all $z\ge 0$. Computing $Q''(z)$, we get that $L$ is a polynomial of degree $5$\,:
$L(z)=\sum_{i=0}^5 \ell_i z^i$, with
\begin{eqnarray*}
\ell_0 &=& 8s(t-s)(2t-3s)/(1+s)\;,\\
\ell_1 &=& 8(t-s)(3t^2-(2-3t)st-(6+8t)s^2)/(1+s)^2\;,\\
\ell_2 &=& 16t(t-s)(3t-(8+5t)s)/(1+s)^2\;,\\
\ell_3 &=& -16(t-s)(4t^3+4st(2+t+t^2)-s^2(3-2t-t^2))/(1+s)^3\;,\\
\ell_4 &=& -8(t-s)(6t^2+10t^3+2st(1-t+2t^2)-s^2(3-2t-t^2))/(1+s)^3\;,\\
\ell_5 &=& -8t(t+1)(t-s)(3t-2s+2st-t^2)/(1+s)^3\;.
\end{eqnarray*}

Using $0<s\le t\le 1/2$, it is easy to check that $\ell_3\le 0$, $\ell_4\le 0$ and $\ell_5\le 0$. Since ${2-3t\ge 0}$, we thus get
\begin{eqnarray*}
L(z)&\le& \frac{8s(t-s)(2t-3s)}{(1+s)} + \frac{24t^2(t-s)}{(1+s)^2}(z+2z^2)\;,\\
&\le& \frac{24t^2(t-s)}{(1+s)^2}\left(\frac{s(2t-3s)(1+s)}{3t^2} +z+2z^2\right)\;.
\end{eqnarray*}
Set $d(t,u):=\frac{1+t}{1+tu}$, $h_1(t,u):=\frac{1-u}{(1+tu)^2}$ and $h_2(t,u):=\frac{u(2-3u)(1+tu)}{3}$. 
Thus, setting also ${u=s/t\in (0,1]}$,
$$L(z)\le 24 t^3 h_1(t,u)\big(h_2(t,u)+z+2z^2\big).$$
In the following, $d(t,u)$, $h_1(t,u)$ and $h_2(t,u)$ will simply be denoted by $d$, $h_1$ and $h_2$.

For all $z > 0$,
$$D^4(z) > 1+8z+(4d+24)z^2.$$
Thus, if for all $z\ge 0$,
\begin{equation}\label{d4z}1+8z+(4d+24)z^2\ge 12h_1(h_2+z+2z^2)\;,\end{equation}
then for all $z > 0$,
$$D^4(z) > 12h_1(h_2+z+2z^2).$$
Let us now prove that \eqref{d4z} holds for all $z\ge 0$. Note that this is equivalent to show that for all $z\ge 0$,
\begin{equation}\label{d4zb}(1-12h_1h_2) + 4(2-3h_1)z + 4(d+6-6h_1)z^2 \ge 0.\end{equation}
Note that $12h_1h_2\le 1$ (since $(1-12h_1h_2)(t,u)\ge (1-12h_1h_2)(0,u)=1-8u+20u^2-12u^3>0$ for all $u\in [0,1]$).
Thus \eqref{d4zb} is satisfied for all $z\ge 0$ as soon as  $2-3h_1\ge 0$ or as soon as $(2-3h_1)^2\le (1-12h_1h_2)(d+6-6h_1).$

Note that  $2-3h_1\ge 0$  when
$$u\ge u_t:=  \frac{2}{\sqrt{3}\sqrt{3+8t+8t^2}+3+4t}.$$

Suppose now that $u<u_t$. 
Then $(2-3h_1)^2\le (1-12h_1h_2)(d+6-6h_1)$ if and only if $F(t,u)\ge 0$, where
\begin{eqnarray*}
G(t,u)&:=&\big( (1-12h_1h_2)(d+6-6h_1) - (2-3h_1)^2\big)\times (1+tu)^4\;,\\
&=&4u -37u^2 +108u^3-72u^4\\
&&+\,t(1+15u-104u^2+220u^3-48u^4-72u^5)\\
&&+\,t^2(3u+5u^2-118u^3+356u^4-228u^5)\\
&&+\,t^3(3u^2+u^3-28u^4+108u^5-72u^6)\\
&&+\,t^4u^3(1+2u).
\end{eqnarray*}

Using the fact that $u< u_t \le 1/3$, 
\begin{eqnarray*}
G(t,u)&\ge& u(4 -37u +108u^2-72u^3)\\
&&+\,t(1+15u-104u^2+196u^3)\\
&&+\,t^2u(3+5u-118u^2+280u^3)\\
&&+\,t^3u^2(3+u-28u^2+84u^3)\\
&&+\,t^4u^3(1+2u).
\end{eqnarray*}
We check that each of the $5$ terms lowerbounding $G(t,u)$ are positive for all ${u\in (0,1/3]}$.
We have thus proved that $(2-3h_1)^2 \le (1-12h_1h_2)(d+6-6h_1)$ for all $u\in [0,u_t]$.
And as a consequence that \eqref{d4z} holds for all $z\ge 0$.

We can now show that $q$ is strictly convex. Inequality \eqref{d4z} implies that $L(z) < {2t^3 D^4(z)}$.
In order to show that $q''_2<q''_1$, it just remains to remark that $2t^3\le t(1-t)$ for all $t\le 1/2$. 
Therefore $q$ is strictly convex.
\end{proof}

%%%%%%%%%%%%%%%%%%%%%%%%%%%%

%%%%%%%%%%%%%%%%%%%%%%%%%%%

\bibliography{bib_LeGoff_Raimond}{}

\begin{thebibliography}{10}

\bibitem{Deneubourg1990}
J.-L. Deneubourg, S.~Aron, S.~Goss, and J.~M. Pasteels, ``{The self-organizing
  exploratory pattern of the Argentine ant},'' {\em J. of Insect Behav.},
  vol.~3, no.~2, pp.~159--168, 1990.

\bibitem{Vittori2006}
K.~Vittori, G.~Talbot, J.~Gautrais, V.~Fourcassi{\'{e}}, A.~F.~R. Araujo, and
  G.~Theraulaz, ``{Path efficiency of ant foraging trails in an artificial
  network},'' {\em Journal of Theoretical Biology}, vol.~239, pp.~507--515,
  2006.

\bibitem{Garnier2009}
S.~Garnier, A.~Gu{\'{e}}r{\'{e}}cheau, M.~Combe, V.~Fourcassi{\'{e}}, and
  G.~Theraulaz, ``{Path selection and foraging efficiency in Argentine ant
  transport networks},'' {\em Behavioral Ecology and Sociobiology}, vol.~63,
  no.~8, pp.~1167--1179, 2009.

\bibitem{Davis1990}
B.~Davis, ``{Reinforced random walk},'' {\em Probability Theory and Related
  Fields}, vol.~84, no.~2, pp.~203--229, 1990.

\bibitem{Tarres2004}
P.~Tarr{\`{e}}s, ``{Vertex-Reinforced random walk on Z eventually gets stuck on
  five points},'' {\em The Annals of Probability}, vol.~32, pp.~2650--2701,
  2004.

\bibitem{Sellke2008}
T.~Sellke, ``{Reinforced random walk on the d-dimensional integer lattice},''
  {\em Markov Processes and Related Fields}, vol.~14, pp.~291--308, 2008.

\bibitem{Limic2007}
V.~Limic and P.~Tarr{\`{e}}s, ``{Attracting edge and strongly edge reinforced
  walks},'' {\em The Annals of Probability}, vol.~35, pp.~1783--1806, 2007.

\bibitem{Benaim2013}
M.~Bena{\"{i}}m, O.~Raimond, and B.~Schapira, ``{Strongly
  vertex-reinforced-random-walk on a complete graph},'' {\em ALEA Lat. Am. J.
  Probab. Math. Stat.}, vol.~10, no.~2, pp.~767--782, 2013.

\bibitem{Madras1993}
N.~Madras and G.~Slade, ``{The Self-Avoiding Walk},'' {\em The Self-Avoiding
  Walk}, vol.~0, 1993.

\bibitem{Ortner2007}
R.~Ortner and W.~Woess, ``{Non-backtracking random walks and cogrowth of
  graphs},'' {\em Canad. J. Math.}, vol.~59, no.~4, pp.~828--844, 2007.

\bibitem{Cotar2017}
C.~Cotar and D.~Thacker, ``{Edge- and vertex-reinforced random walks with
  super-linear reinforcement on infinite graphs},'' {\em To appear in "Annals
  of Probability"}, 2017.

\bibitem{Benaim2010}
M.~Bena{\"{i}}m and O.~Raimond, ``{A class of self-interacting processes with
  applications to games and reinforced random walks},'' {\em SIAM J. Control
  Optim.}, vol.~48, no.~7, pp.~4707--4730, 2010.

\bibitem{Pemantle1992}
R.~Pemantle, ``{Vertex-reinforced Random Walk},'' {\em Probability Theory and
  Related Fields}, vol.~92, pp.~117--136, 1992.

\bibitem{Cotar2017a}
C.~Cotar, M.~Holmes, and V.~Kleptsyn, ``{Non-backtracking strongly reinforced
  random walk},'' {\em Work in progress}, 2017.

\bibitem{Schweitzer1997}
F.~Schweitzer, K.~Lao, and F.~Family, ``{Active Random Walkers Simulate Trunk
  Trail Formation by Ants},'' {\em BioSystems}, vol.~41, pp.~153--166, 1997.

\bibitem{Perna2012}
A.~Perna, B.~Granovskiy, S.~Garnier, S.~C. Nicolis, M.~Lab{\'{e}}dan,
  G.~Theraulaz, V.~Fourcassi{\'{e}}, and D.~J.~T. Sumpter, ``{Individual Rules
  for Trail Following in Argentine Ants (Linepithema Humile)},'' {\em PLoS
  Computational Biology}, vol.~8, p.~e1002592, 2012.

\bibitem{Benaim2005a}
M.~Bena{\"{i}}m, J.~Hofbauer, and S.~Sorin, ``{Stochastic approximations and
  differential inclusions},'' {\em SIAM J. Control Optim.}, vol.~44, no.~1,
  pp.~328--348, 2005.

\bibitem{McDiarmid1989}
C.~McDiarmid, ``{On the method of bounded differences},'' in {\em Surveys in
  Combinatorics, 1989} (J.~Siemons, ed.), pp.~148--188, Cambridge University
  Press, 1989.

\bibitem{Benaim1999}
M.~Bena{\"{i}}m, ``{Dynamics of Stochastic Approximation Algorithms},'' {\em
  S{\'{e}}minaire de Probabilit{\'{e}}s (Strasbourg)}, vol.~33, pp.~1--68,
  1999.

\bibitem{Duflo1996}
M.~Duflo, {\em {Algorithmes stochastiques}}.
\newblock Springer, 1996.

\bibitem{Benaim1997}
M.~Bena{\"{i}}m, ``{Vertex-reinforced random walks and a conjecture of
  Pemantle},'' {\em Ann. Probab.}, vol.~25, no.~1, pp.~361--392, 1997.

\end{thebibliography}
\bibliographystyle{ieeetr}
\end{document}